\newtheorem{theorem}{Theorem}[section]
\newtheorem{lemma}[theorem]{Lemma}
\theoremstyle{definition}
\title[Symmetric Periodic Orbits]{Symmetric Periodic Orbits in Three Sub-Problems of the $N-$body Problem}
\author[Nai-Chia Chen]{}
\subjclass{Primary: 70F07; Secondary: 37C27.}
 \keywords{three-body problem, n-body problem, periodic orbits}
 \email{chen1945@umn.edu}
\thanks{This research is supported by NSF grant DMS-1208908.}
\begin{document}
\maketitle

\centerline{\scshape Nai-Chia Chen }
\medskip
{\footnotesize
 \centerline{School of Mathematics}
   \centerline{University of Minnesota}
   \centerline{Minneapolis, MN 55455, USA}
} 

\bigskip

 \centerline{(Communicated by the associate editor name)}

\begin{abstract}
We study three sub-problems of the $N$-body problem that have two degrees of freedom, namely the $n-$pyramidal problem, the planar double-polygon problem, and the spatial double-polygon problem. We prove the existence of several families of symmetric periodic orbits, including ``Schubart-like" orbits and brake orbits, by using topological shooting arguments.
\end{abstract}

\section{Introduction}

The Newtonian $n$-body problem studies the motion of $n$ point masses moving in the Euclidean space, under the influence of their mutual gravitational attraction. The motion is determined by the system of differential equations:
\[ \ddot{\mathbf{x}}_{i}=\sum_{j\neq i}^{n}m_{j}\frac{\mathbf{x}_{j}-\mathbf{x}_{i}}{|\mathbf{x}_{j}-\mathbf{x}_{i}|^{3}},\ \ \mathbf{x}_{i}\in\mathbf{R}^3,\]
where $\mathbf{x}_{i}$ and $m_{i}$ represent the position and the mass of the i-th mass respectively.

One of the difficulties in studying the $n-$body problem is due to the large number of variables. Therefore, sub-problems of the $n-$body problem that have lower degrees of freedom, which are usually obtained by adding constraints on the configurations, have received considerable attention. Two very popular examples are the collinear three-body problem and the isosceles three-body problem. 

On one hand, a well-known periodic orbit in the collinear three-body problem is the so called Schubart orbit, formed by two equal masses $m_{1}=m_{2}$ and another mass $m_{3}$, bouncing in between and having binary collisions with $m_{1}$, $m_{2}$ alternatively. This orbit was numerically found by Schubart~\cite{schubart1956}. Moeckel~\cite{moeckel2008} and Venturelli~\cite{venturelli2008} separately proved its existence by using topological shooting arguments and variational methods. On the other hand, in the isosceles three-body problem, Broucke found a symmetric periodic orbit ~\cite{broucke1979}, called Broucke orbit or a ``Schubart-like" orbit, formed by two equal masses $m_{1}=m_{2}$ whose positions are symmetric with respect to a fixed axes, along with a third mass $m_{3}$ that is moving up and down on the axes. See table~\ref{orbits_table}(a) for the Broucke orbit. Broucke orbit has been proved to exist by Shibayama~\cite{shibayama2011} and Mart\'{i}nez~\cite{martinez2012} separately. A ``Schubart-like"(Broucke) orbit is similar to Schubart orbit in the sense that in one period, they both have two singularities due to binary collisions, and that when a binary collision occurs, the third mass reaches its maximum distance to the origin.

The motivation for this paper comes from the work of Mart\'{i}nez~\cite{martinez2012,martinez2013} and Chen~\cite{chen2013}.  Mart\'{i}nez~\cite{martinez2012} studies certain Hamiltonian systems with two degrees of freedom and provides sufficient conditions for the existence of doubly symmetric ``Schubart-like" periodic orbits (DSSP orbits) in these systems. These sufficient conditions are applied to three examples, namely the $n-$pyramidal problem, the $2n$-planar problem, and the double-polygon problem. However, the double-polygon problem fails one of these sufficient conditions. Recently, Mart\'{i}nez~\cite{martinez2013} extended her previous results. While the previously proved Schubart-like orbits have only one singularity in a half period (called 0-DSSP orbits), the orbits in the new paper have a sufficiently large number of singularities in a half period (called k-DSSP orbits), yet the existence proof requires one hypothesis that is verified only for several values of $n$. On the other hand, another six families of periodic orbits in the isosceles three-body problem are proved to be existed by using topological shooting arguments~\cite{chen2013}. These are the so called brake orbits; that is, these orbits have zero initial velocity. 

In this paper, we apply the framework from~\cite{chen2013} to the following three problems: the $n-$pyramidal problem, the planar double-polygon problem, and the spatial double-polygon problem. See table~\ref{summary}. First, the $n-$pyramidal problem is a spatial problem which consists of $n$ masses whose configuration forms a planar regular $n-$gon, along with an additional mass lying on the vertical axes crossing the center of the $n-$gon. We remark that when $n=2$, the $n-$pyramidal problem is identical to the planar isosceles three-body problem. Second, in the planar double-polygon problem, the configuration of the $2n$ bodies forms two regular $n-$gons in the plane. Third, in the spatial double-polygon problem, the configuration of the $2n$ bodies forms an anti-prism. We remark that the spatial double-polygon problem is a new example that has not appeared in ~\cite{martinez2012,martinez2013}, and it is a special case of the dihedral n-body problem. The dihedral n-body problem is proposed by Ferrario and Portaluri in~\cite{ferrario}, where they find all central configurations and compute the dimensions of the stable/unstable manifolds, while the existence of other periodic orbits, except for the relative equilibria, has not been studied.

We prove the existence of several families of periodic orbits in the $n-$pyramidal problem and the spatial double-polygon problem. Representative orbits in the isosceles problem can be found in table~\ref{orbits_table}. To picture orbits of the same types in other problems, for the $n-$pyramidal problem, one may imagine replacing the two symmetric bodies in the isosceles problem with $n$ bodies lying on the vertices of a regular $n-$gon; for the spatial double-polygon problem, one furthermore replaces the third body with another $n$ bodies lying on the vertices of another regular $n-$gon. As for the planar double-polygon problem, unfortunately, as in~\cite{martinez2012}, there are difficulties to apply our arguments. Nonetheless, we complete Mart\'{i}nez's existence proof for 0-DSSP orbits, in which one of her three sufficient conditions fails, by showing that two of those conditions are enough to ensure the existence of 0-DSSP orbits. Existence of general k-DSSP orbits for the planar double-polygon problem are not proved here and will require further work. We summarize our conclusions in table~\ref{summary}.

Compared to Mart\'{i}nez's work~\cite{martinez2012,martinez2013}, we have found new families of periodic orbits, and our proofs are significantly simplified; we provide sufficient conditions that are looser and rigorously verified. Moreover, while the orbits in \cite{martinez2012,martinez2013} must have either one or a sufficiently large number of singularities, we prove the existence of periodic  orbits for any positive number of singularities in a half period.

The paper is organized as follows. Section 2 introduces two coordinate systems, both of which will be used in our proofs. Section 3 provides sufficient conditions for the existence of periodic brake orbits and Schubart-like orbit; it turns out that our sufficient conditions will be boiled down to the behaviours of two orbits. Section 4 proves theorems about the two orbits just mentioned. Finally, in Section 5, we apply our theorems to the three problems.

\section{Two Coordinate Systems}
Following the setting in Mart\'{i}nez's papers~\cite{martinez2012,martinez2013}, we consider Lagrangian systems with two degrees of freedom of the following form:
\begin{equation}
L(\mathbf{q},\dot{\mathbf{q}})=\frac{1}{2}\dot{\mathbf{q}}^{T}A\dot{\mathbf{q}}+\mathcal{U}(\mathbf{q}),
\end{equation}\label{lagrangian}
where $\mathbf{q}$ lies in an open set of $\mathbf{R}^{2}$, $A=diag(a_{1},a_{2})$ is a constant diagonal matrix with $a_{1}>0, a_{2}>0$, and $\mathcal{U}(\mathbf{q})$ satisfies certain assumptions that will be stated shortly.

Similar to~\cite{chen2013}, we will define two coordinate systems; one will be called Devaney's coordinates, and the other will be called the new coordinates.

\subsection{Devaney and Mart\'{i}nez's Coordinates~\cite{martinez2012,martinez2013}}
Mart\'{i}nez generalizes Devaney's coordinate system (which uses McGehee-type coordinates~\cite{mcgehee}) for the isosceles problem to the Lagrangian system ~(\ref{lagrangian}). In this subsection, we summarize the relevant results and the coordinate system used in ~\cite{martinez2012,martinez2013}. 

First, the size variable $r\geq 0$ is defined by $r^2=\mathbf{q}^{T}A\mathbf{q}$. The shape variable $\phi$ is defined by $\bar{\mathbf{q}}=(\cos\phi,\sin\phi)$, where $\bar{\mathbf{q}}=\frac{1}{r}\sqrt{A}\mathbf{q}$ and $\Vert\bar{\mathbf{q}}\Vert^{2}=\bar{\mathbf{q}}^{T}\bar{\mathbf{q}}=1$. 

Before introducing more variables, we state the assumptions that the potential function $\mathcal{U}(\mathbf{q})$ must satisfy as follows:
\begin{enumerate}
\item[A.1] $U(\mathbf{q})$ is a homogeneous function of degree -1 such that $\mathcal{U}(\mathbf{q})=V(\phi)/r$, where
\[V(\phi)=\frac{\beta_{1}}{\sin(\theta_{b}-\theta)}+\frac{\beta_{2}}{\sin(\theta-\theta_{a})}+\widehat{V}(\phi),\]
with $\beta_{1}>0, \beta_{2}\geq 0$ constants, where $\beta_{2}=0$ if and only if
 $\phi_{b}-\phi_{a}=\pi$, and $\widehat{V}(\phi)>0$ is a smooth (at least $\mathcal{C}^3$) bounded function in $\phi_{a},\phi_{b}$. Additionally, in the case $\phi_{b}-\phi_{a}<\pi$, we define $f(\phi)=\sin(\phi-\phi_{a})\sin(\phi_{b}-\phi)$; in the case $\phi_{b}-\phi_{a}=\pi$, we define $f(\phi)=\sin(\phi_{b}-\phi)$, so that $f(\phi)V(\phi)$ is bounded.
Furthermore, the critical values of $V(\phi)$ is non-degenerate, that is, if $V'(\phi_{\ast})=0$, then $V''(\phi_{\ast})\neq 0$.
\item[A.2] $V(\phi)$ is symmetrical with respect to $\phi_{m}:=(\phi_{a}+\phi_{b})/2$.
\item[A.3] $V(\phi)$ has exactly three critical points in $(\phi_{a},\phi_{b})$. They are $\phi_{L}<\phi_{m}<\phi_{R}$.
\end{enumerate}
\textbf{Remark on A.1} The potential function $\mathcal{U}(\mathbf{q})$ has singularities at $r=0$ and at $\phi=\phi_{a},\phi_{b}$. Physically, $r=0$ represents total collision of all masses, and $\phi=\phi_{a}$ and $\phi_{b}$ represent the partial collisions of certain masses, which will be referred to as $a-$collisions and $b-$collisions respectively. \\
\textbf{Remark on A.3} Mart\'{i}nez~\cite{martinez2012} studies both the cases when $V(\phi)$ has either one or three critical points. The case when $V(\phi)$ has one critical point has been completely treated; therefore, we will omit this case.

In Devaney's coordinates, the Lagrangian becomes
\[L(r,\dot{r},\phi,\dot{\phi})=\frac{1}{2}\dot{r}^2+\frac{1}{2}r^2\dot{\phi}^2+\frac{1}{r}V(\phi).\]

Furthermore, we define the size velocity $v$ by $v=r^{\frac{1}{2}}\dot{r}$ and the shape velocities $w$ by $w=\dot{\phi}r^{3/2}\sqrt{\frac{f(\phi)}{V(\phi)}}$.  

Let $t$ be the original time scale and define
\[W(\phi)=f(\phi)V(\phi), \ \ F(\phi)=\frac{f(\phi)}{\sqrt{W(\phi)}}.\]
After a change of time scale by $\frac{dt}{ds}=r^{\frac{3}{2}}F(\phi)$, the equations of the system become 
\begin{equation}
\begin{aligned}
\frac{dr}{ds}=&r v F(\phi) \\
\frac{dv}{ds}=&F(\phi)(2hr-\frac{v^2}{2})+\sqrt{W(\phi)} \\
\frac{d\phi}{ds}=&w \\
\frac{dw}{ds}=& -\frac{vw}{2}F(\phi)+\frac{W'(\phi)}{W(\phi)}(f(\phi)-\frac{w^2}{2})+f'(\phi)(1+\frac{f(\phi)}{W(\phi)}(2hr-v^2)),
\end{aligned}\label{diffeqn_devaney}
\end{equation} 
where $h$ is the energy of the system, and the energy equation is
\[\frac{w^2}{2f(\phi)}-1=\frac{f(\phi)}{W(\phi)}(rh-\frac{v^2}{2}).\]

The \textit{energy manifold} (with $h=-1$) is defined as the set
\begin{equation}
\{(r,v,\phi,w): r\geq 0, \phi_{a}\leq\phi\leq\phi_{b}, \frac{w^2}{2f(\phi)}-1=\frac{f(\phi)}{W(\phi)}(-r-\frac{v^2}{2})\},
\end{equation}
and the \textit{collision manifold} is defined as the set 
\begin{equation}
\{(r,v,\phi,w): r=0, \phi_{a}\leq\phi\leq\phi_{b}, \frac{w^2}{2f(\phi)}+\frac{f(\phi)}{W(\phi)}\frac{v^2}{2}=1\}.
\end{equation}

The collision manifold is a two-dimensional invariant manifold and is topologically a sphere with four holes. See figure~\ref{collision_manifold_devaney}. We note that the system of differential equations~(\ref{diffeqn_devaney}) no longer has singularities; the singularities due to partial collisions have been regularized, and the total collision singularity is replaced by the the collision manifold, an invariant set for the flow.
 
\begin{figure}[ht]
\centering
\includegraphics[width=0.5\textwidth]{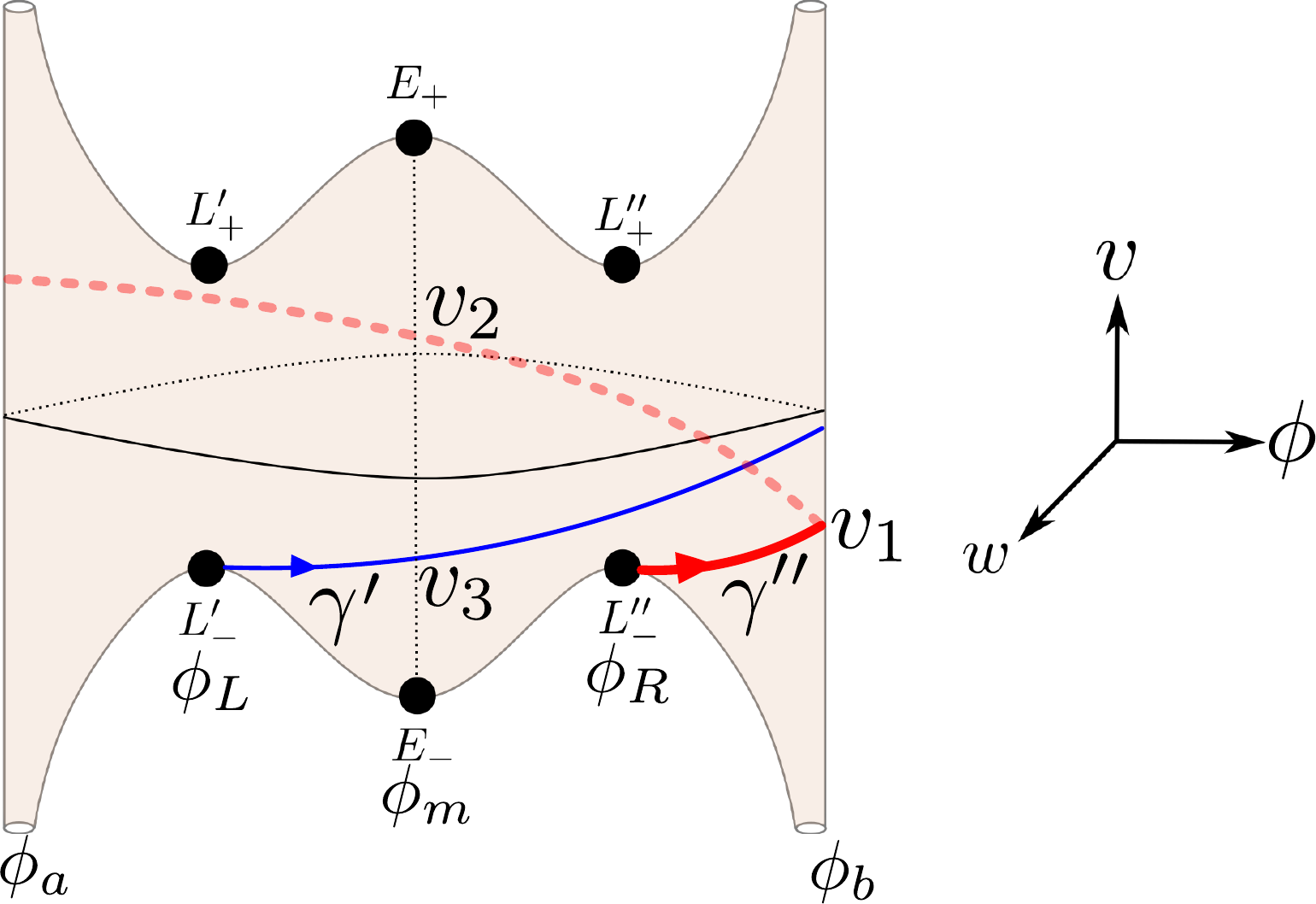}
\caption{The collision manifold and the two branches $\gamma',\gamma''$.}
\label{collision_manifold_devaney}
\end{figure}

The system~\ref{diffeqn_devaney} has exactly six equilibrium points: the Lagrange equilibria 
$L'_{\pm}=(0,\pm\sqrt{2V(\phi_{L})},\phi_{L},0)$, $L''_{\pm}=(0,\pm\sqrt{2V(\phi_{R})}, \phi_{R},0)$, and the Euler equilibria $E_{\pm}=(0,\pm\sqrt{2V(\phi_{m})},\phi_{m},0)$. We call these equilibrium points Lagrange/Euler equilibria because they share the same hyperbolic properties with those equilibria in the isosceles three-body problem. When restricted to the triple collision manifold, both $W^{u}(L'_{-})$ and $W^{u}(L''_{-})$ are one-dimensional, while $W^{u}(E_{-})$ is two-dimensional. We denote the branches of $W^{u}(L'_{-})$ and $W^{u}(L''_{-})$ that initially have $w\geq 0$ by $\gamma'$ and $\gamma''$ respectively. When restricted to the energy manifold, all equilibria are hyperbolic, and $dim(W^{s}(L''_{-}))=2$, $dim(W^{u}(L''_{-}))=1$, $dim(W^{u}(E_{-}))=2$. Once we have $W^{s,u}(L''_{-})$, we may find the stable and unstable manifolds of other equilibria by symmetries.

\subsection{New Coordinates \cite{moeckel2012,chen2013}}

The new coordinate system uses four variables: $r$,$v$,$\theta$, and $w$. The size variable $r\geq 0$ and the size velocity $v$ are defined the same as in Devaney's coordinates. That is, $r^2=\mathbf{q}^{T}A\mathbf{q}$ and $v=r^{\frac{1}{2}}\dot{r}$. We write $\mathbf{q}=(q_{1},q_{2})$ and $\bar{\mathbf{q}}=1/r\sqrt{A}\mathbf{q}$. The new shape variable $\theta$ is defined by $\mathbf{\bar{q}}=(c_{1}(\theta),c_{2}(\theta))$. In application, the choice of  $(c_{1}(\theta),c_{2}(\theta))$ depends on the range of $\mathbf{\bar{q}}$. If $\mathbf{\bar{q}}$ lies on the half unit circle; that is, if $q_{2}\in\mathbf{R}$, then we consider the stereographic projection of the segment $\{(0,\sin\theta),\theta\in \mathbf{R}\}$ from $(-1,0)$ to the half unit circle, as shown in figure~\ref{shape_variable}(a), and one obtains $(c_{1}(\theta),c_{2}(\theta))=(\frac{\cos^{2}\theta}{1+\sin^{2}\theta},\frac{2\sin\theta}{1+\sin^{2}\theta})$. 
If $\mathbf{\bar{q}}$ lies on the quarter unit circle in the first quadrant; that is, if $q_{2}\geq 0$, then we consider the parallel projection of the segment $\{(0,\sin\theta),\theta\in \mathbf{R}\}$ along the direction $(1,1)$ to the half unit circle, as shown in figure~\ref{shape_variable}(b), and one obtains $(c_{1}(\theta),c_{2}(\theta))=(\frac{\sqrt{\cos^2\theta+1}-\sin\theta}{2},\frac{\sqrt{\cos^2\theta+1}+\sin\theta}{2})$. Note that we allow the variable $\theta$ to vary from $-\infty$ to $\infty$; this gives a multiple cover of the half unit circle or quarter circle, with branched points at the endpoints of circles which correspond to $a-$collision or $b-$collision singularities. In both cases, $c'_{1}(\theta)^2+c'_{2}(\theta)^2=\frac{\cos^2\theta}{c(\theta)}$ for some analytic function $c(\theta)$, with $c(\theta)>0$ and $\frac{c'(\theta)}{\cos\theta\sin\theta}$ being analytic. Specifically,  $c(\theta)=(1+\sin^{2}\theta)^2 /4$ for the first case, and $c(\theta)=1+\cos^{2}\theta$ for the second case. Finally, the shape velocity $w$ is defined by $w=\dot{\theta}r^{3/2}\frac{\cos^2\theta}{c(\theta)}$.

\textbf{Remark.} The shape variable $\phi$ in Devaney's coordinates and the shape variable $\theta$ in the new coordinates are related by $(c_{1}(\theta),c_{2}(\theta)=(\cos\phi,\sin\phi)$. There is no simple expression that relates the $w$ variable in Devaney's coordinates with the $w$ variable in the new coordinates.

\begin{figure}[ht]
\centering
\subfloat[$q_{2}\in\mathbf{R}$]{{\includegraphics[width=0.4\textwidth]{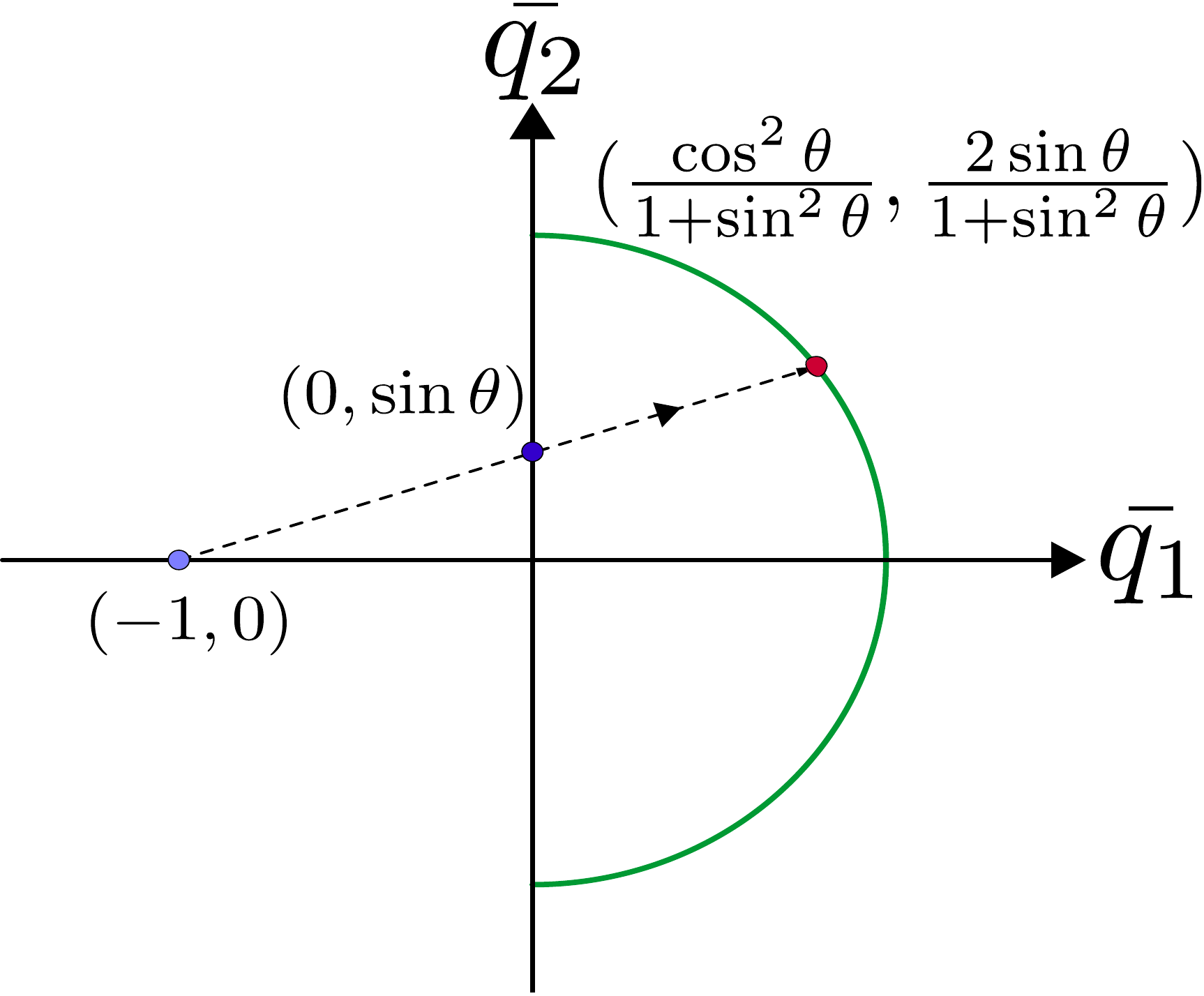}}}\hfill
\subfloat[$q_{2}\geq 0$]{{\includegraphics[width=0.4\textwidth]{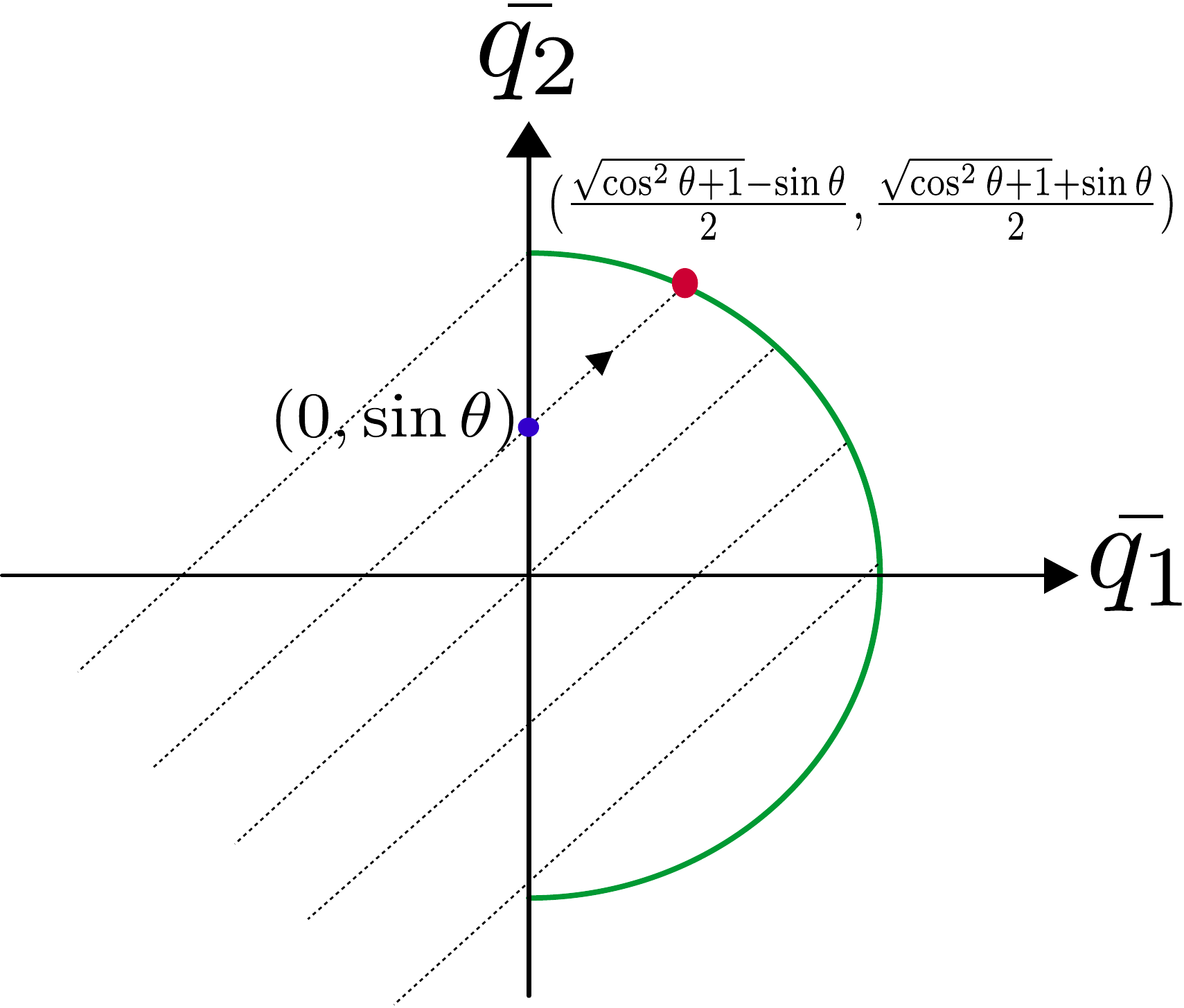}}}
\caption{The shape variable $\theta$.}
\label{shape_variable}
\end{figure}

In the new coordinates, the Lagrangian becomes
\begin{equation*}\label{Lagrangian_new}
L(r,\dot{r},\theta,\dot{\theta})=\frac{1}{2}{\dot{r}^2}+\frac{1}{2}r^2\dot{\theta}^2\frac{\cos^{2}\theta}{c(\theta)}+\frac{1}{r} V(\theta).
\end{equation*} 
The energy equation becomes
\[\frac{1}{2}v^2\cos^{2}\theta+\frac{1}{2}w^2 c(\theta)-\mathcal{W}(\theta)=rh\cos^2\theta,\]
where $\mathcal{W}(\theta):=\cos^2\theta V(\theta)$, $h$ is the energy of the system, and with an abuse of notation, $V(\phi)=V(\theta)$.

After a change of time scale by $\frac{dt}{ds}=r^{\frac{3}{2}}\cos^{2}\theta$,
the system of differential equations becomes
\begin{equation}
\begin{aligned}
\frac{dr}{ds}=&r v \cos^2\theta \\
\frac{dv}{ds}=&\frac{1}{2}v^2\cos^{2}\theta+w^2 c(\theta)-\mathcal{W}(\theta) \\
\frac{d\theta}{ds}=&w c(\theta) \\
\frac{dw}{ds}=&\mathcal{W}'(\theta)-\frac{1}{2}vw\cos^2\theta+\cos\theta\sin\theta(2r+v^2-\frac{1}{2}w^2\frac{c'(\theta)}{\sin\theta\cos\theta}).
\end{aligned}\label{newequation}
\end{equation}
The derivation of the system~(\ref{newequation}) can be found in the Appendix.

We remark that the singularities due to $a-$collision and $b-$collision have been regularized, since by our choice, $\mathcal{W}(\theta)$ and $\frac{c'(\theta)}{\cos\theta\sin\theta}$ appearing in equation~(\ref{newequation}) are both analytic. Moreover, the total collision that corresponds to $r=0$ has been blow-up to the collision manifold, which is an invariant set of the flow.

The \textit{energy manifold} (with $h=-1$) is defined as the set
\begin{equation}
\mathcal{P}_{1}=\{(r,v,\theta,w): r\geq 0,\frac{1}{2}v^2\cos^{2}\theta+\frac{1}{2}w^2 c(\theta)-\mathcal{W}(\theta)=-r\cos^2\theta.\},
\end{equation} Notice that in the energy equation, $c(\theta)>0$, and thus the variable $w$ can be solved as a two-valued function of $(r,v,\theta)$. The energy manifold can be visualized as two copies (one with $w\geq 0$ and another one with $w\neq 0$) of its projection to the $(r,v,\theta)$-space. See figure~\ref{energymanifold2}.
The \textit{collision manifold} is defined as the subset $P_{1}\cap \{r=0\}$, in which the flow is gradient-like with respect to the variable $v$, that is, $\frac{dv}{ds}\geq 0$.

\begin{figure}[ht]
 \begin{center}
  \includegraphics[scale=0.6]{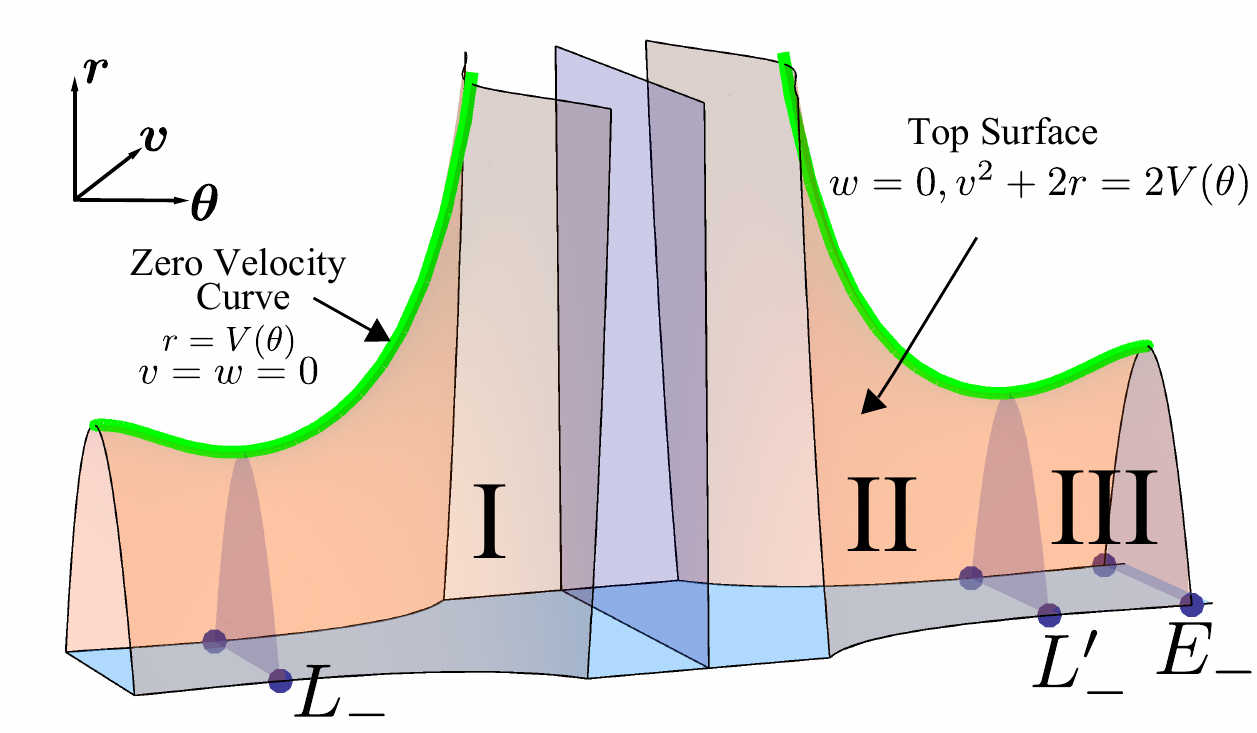}  
 \end{center}
  \caption{One copy of the projection of the energy manifold to the $(r,v,\theta)$-space. The top surface, the floor surface, and the zero velocity curve are given by $w=0$, $r=0$, and $v=w=0$ respectively.}
  \label{energymanifold2}
 \end{figure}

Now we summarize the flow on the energy manifold. The equilibrium points of the system are at the Lagrange equilibria $(r,\theta,v,w)=(0,\pm\theta^{*}+k\pi,\pm v^{*},0)$ and the Euler equilibria $(r,\theta,v,w)=(0,k\pi,\pm\sqrt{2V(0)},0)$, where $\theta^{*}$ is the unique critical point of $V(\theta)$ for $\theta\in (0,\pi/2)$, $k\in \emph{Z}$ and $v^{*}=\sqrt{2V(\theta^{*})}$.
Denote the equilibrium points that are relevant to our proofs by
\begin{equation*}
\begin{aligned}
&L_{\pm}=(0,\theta^{*}-\pi,\pm v^{*},0), \ \ L'_{\pm}=(0,-\theta^{*},\pm v^{*},0), \ \ E_{\pm}=(0,0,\pm \sqrt{2V(0)},0).
\end{aligned}
\end{equation*} 
The equilibrium points $L_{+}$ and $L_{-}$ are connected by a homothetic orbit; so are $L'_{+}$ and $L'_{-}$. Also $E_{+}$ and $E_{-}$ are connected by a homothetic orbit. When restricted to the collision manifold,  both $W^{s}(L_{-})$ and $W^{u}(L_{-})$ have dimension one, while $W^{s}(E_{+})$ has dimension two. When restricted to the energy manifold $P_{1}$, $W^{s}(L_{-})$ has dimension two, $W^{u}(L_{-})$ has dimension one, and $W^{s}(E_{+})$, which is contained in the collision manifold, has dimension two.

Next we divide a part of the energy manifold $P_{1}$ into several regions. Define
\begin{equation*}
\begin{aligned}
R_{I}=&P_{1}\cap\{\theta\in[\theta^{*}-\pi,-\pi/2],w\geq 0\}, &&Q_{I}=P_{1}\cap\{\theta\in[\theta^{*}-\pi,-\pi/2],w\leq 0\}, \\
R_{II}=&P_{1}\cap\{\theta\in[-\pi/2,-\theta^{*}],w\geq 0\}, &&Q_{II}=P_{1}\cap\{\theta\in[-\pi/2,-\theta^{*}],w\leq 0\}, \\
R_{III}=&P_{1}\cap\{\theta\in[-\theta^{*},0],w\geq 0\}.
\end{aligned}
\end{equation*}

We call the planes $\theta=\theta^{\ast}-\pi$ and $\theta=-\pi/2$ the left and right walls of $R_{I}$ respectively with similar definitions for the other regions.
Now we summarize the properties of the flow in each region. 

\begin{lemma} \label{lemma_invariant}
\begin{enumerate}
\item[(i)] Regions $R_{I}, R_{III}$ are \textit{flowing-rightward}. With the exception of the Lagrange homothetic orbits and orbits in the stable manifold of $E_{+}$, orbits cross theses regions as follows: every orbit beginning in the left wall of 
$R_{I}(R_{III})$ crosses region $R_{I}(R_{III})$ and exits at the right wall.

\item[(ii)]
Region $R_{II}$ is \textit{flowing-leftward in backward-time}. Except 
for the Lagrange homothetic orbits, any backward-time orbit beginning in the 
right wall of $R_{II}$ can be followed back to the left wall.
Furthermore, forward orbits beginning in the left wall of $R_{II}$
either leave $R_{II}$ through the right wall, 
leave $R_{II}$ through the surface $\{w=0\}$, or converge to
one of the Lagrange restpoints $L'_{\pm}$ in the right wall. 
In other words, forward orbits would not stay in $R_{II}$ forever unless 
they belong to $W^{s}(L'_{\pm})$. 

\item[(iii)] Region $Q_{I}$ is flowing-rightward in backward-time. Except for the Lagrange homothetic orbit, every backward-time orbit beginning in the left wall of $Q_{I}$ can be followed back to the right wall. Region $Q_{II}$ is flowing-leftward; except for the Lagrange homothetic orbit, every forward orbit beginning in region $Q_{II}$ crosses the region and exits at the left wall.

\end{enumerate}

\end{lemma}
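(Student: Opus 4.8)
The plan is to analyze the sign of the vector field \eqref{newequation} restricted to the energy manifold $P_1$ on the boundaries and interiors of each region, extracting monotonicity of $\theta$ (hence the "flowing" directions) from the third equation $\tfrac{d\theta}{ds}=w\,c(\theta)$ together with $c(\theta)>0$, and controlling the behaviour of $w$ via the fourth equation. First I would set up the common ingredients: on $P_1$ with $h=-1$ the energy relation gives $\tfrac12 w^2 c(\theta)=\mathcal{W}(\theta)-\tfrac12 v^2\cos^2\theta-r\cos^2\theta$, so that along an orbit $w$ can vanish only where $\mathcal{W}(\theta)\ge \tfrac12(v^2+2r)\cos^2\theta$. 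Since $\mathcal{W}(\theta)=\cos^2\theta\,V(\theta)$, this already shows $w=0$ forces $V(\theta)\ge \tfrac12 v^2+r$, which near the collision manifold $r=0$ pins $v^2\le 2V(\theta)$. I would also record the "walls": the left/right walls $\theta=\theta^*-\pi,\,-\pi/2,\,-\theta^*,\,0$ are exactly the critical points of $V$ (equivalently of $\mathcal{W}$, modulo the $\cos^2\theta$ factor vanishing only at $\theta\equiv\pi/2\pmod\pi$), and by A.3 plus the symmetry A.2 the sign of $\mathcal{W}'(\theta)$ is constant on each open region. This is the key structural input that makes $w$ monotone enough to conclude.

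The core of the argument is then an invariant-region (isolating block, in Conley's sense) analysis region by region. For (i), on $R_I=P_1\cap\{\theta\in[\theta^*-\pi,-\pi/2],\,w\ge0\}$ I would show: (a) the face $\{w=0\}$ is an exit-proof face except at the Lagrange restpoints — on $w=0$ the third equation gives $\theta'=0$, and the fourth gives $w'=\mathcal{W}'(\theta)+\cos\theta\sin\theta(2r+v^2)$, which I claim is $>0$ on the interior of the $\theta$-interval (here one uses that $\mathcal{W}'$ has the right sign between the consecutive critical points $\theta^*-\pi$ and $-\pi/2$, and that $\cos\theta\sin\theta$ also has a definite sign there, making the two terms cooperate — this sign bookkeeping is where I expect to do the most careful work); hence orbits are pushed into $w>0$, so $\theta$ is strictly increasing, i.e. the region is flowing-rightward. (b) The left wall $\theta=\theta^*-\pi$ is an entrance: there $\mathcal{W}'=0$ (critical point), $w\ge0$ gives $\theta'\ge0$, and one checks $w>0$ is forced immediately (again from $w'>0$ at $w=0$, using $\cos\theta\sin\theta\ne0$ there), except precisely on $W^u(L_\pm)$/the homothetic orbit and $W^s(E_+)$. (c) No orbit can accumulate inside $R_I$: $\theta$ is increasing and bounded, so it must limit on $\theta=-\pi/2$ unless it limits on an equilibrium; the only equilibria with $\theta$ in this range are $L_\pm$, and the excluded sets account for $W^s$ of those and of $E_+$. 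So every other orbit from the left wall exits the right wall. The same template handles $R_{III}$ (interval $[-\theta^*,0]$), using the symmetry and the fact that $E_+$ sits on the right wall $\theta=0$.

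For (ii) and (iii) I would run the same block analysis with reversed roles. On $R_{II}=P_1\cap\{\theta\in[-\pi/2,-\theta^*],\,w\ge0\}$ the sign of $\mathcal{W}'$ (and of $\cos\theta\sin\theta$) is the opposite one, so on $\{w=0\}$ one now gets $w'<0$ in the interior: the $\{w=0\}$ face is an exit face, which is exactly why forward orbits may leave $R_{II}$ through $\{w=0\}$, while in backward time the region is flowing-leftward and the right wall is a backward-entrance (reduce to (i) by the time-reversal $s\mapsto -s$, $w\mapsto -w$, which is a symmetry of \eqref{newequation}). The trichotomy for forward orbits from the left wall — exit right wall, exit $\{w=0\}$, or converge to $L'_\pm$ — then follows because $\theta$ can only be eventually monotone or oscillate once across $\{w=0\}$, and an orbit trapped forever with $\theta$ bounded must converge to the only restpoints available, $L'_\pm$ on the right wall; this is the Conley-index/no-other-$\omega$-limit step. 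Finally $Q_I,Q_{II}$ are the images of $R_I,R_{II}$ under $w\mapsto -w$ (equivalently time reversal), so (iii) is immediate from (i)–(ii) by that symmetry, with the homothetic-orbit exception carried along. The main obstacle throughout is purely the sign analysis of $w'=\mathcal{W}'(\theta)-\tfrac12 vw\cos^2\theta+\cos\theta\sin\theta\bigl(2r+v^2-\tfrac12 w^2\tfrac{c'(\theta)}{\sin\theta\cos\theta}\bigr)$ on the faces $\{w=0\}$: one must verify that $\mathcal{W}'(\theta)$ and $\cos\theta\sin\theta(2r+v^2)$ have the same sign on each open $\theta$-interval between consecutive critical points, which uses A.3, the symmetry A.2, and the explicit form of $\mathcal{W}=\cos^2\theta\,V$; once that is pinned down, everything else is the standard isolating-block bookkeeping.
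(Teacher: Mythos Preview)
Your isolating-block analysis of the face $\{w=0\}$ is the right idea and matches the paper's first ingredient. You write $w'|_{w=0}=\mathcal{W}'(\theta)+\cos\theta\sin\theta\,(2r+v^2)$ and propose to argue that the two summands have the same sign on each subinterval. There is a one-line simplification you are missing: on $\{w=0\}$ the energy relation forces $2r+v^2=2V(\theta)$, and substituting this collapses the expression to $w'|_{w=0}=\cos^2\theta\,V'(\theta)$. The sign is then read off directly from the location of the critical points of $V$ (assumption A.3), with no separate bookkeeping for $\mathcal{W}'$ and $\cos\theta\sin\theta$. This is exactly what the paper records as ingredient~(1).

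The genuine gap is in your step (c). The regions are not compact---$r$ and $v$ range over an unbounded set---so ``$\theta$ is increasing and bounded, hence limits on $-\pi/2$ or on an equilibrium'' does not by itself yield that the orbit \emph{exits} at the right wall. Near the partial-collision wall $\theta=-\pi/2$ one has $V(\theta)\to\infty$, so the energy bound $v^2+2r\le 2V(\theta)$ degenerates there, and nothing in your outline rules out $(r,v)\to\infty$ as $\theta\to-\pi/2$ without the orbit ever arriving. The paper supplies precisely the missing compactness estimate as its ingredient~(2): setting $\lambda:=\sqrt{2r+v^2}$ one computes from \eqref{newequation} that $\lambda\,\tfrac{d\lambda}{ds}=\tfrac12\, v w^2 c(\theta)$, so after reparametrizing by $\theta$ (using $\tfrac{d\theta}{ds}=w\,c(\theta)$) one gets $\bigl|\tfrac{d\lambda}{d\theta}\bigr|=\tfrac{|vw|}{2\lambda}\le \tfrac{|w|}{2}$. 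Since $w^2\le 2\mathcal{W}(\theta)/c(\theta)$ with $\mathcal{W}$ and $c$ both bounded and bounded away from zero, $w$ is uniformly bounded; hence $\lambda$ can change by at most a fixed amount as $\theta$ sweeps the bounded interval. This keeps the orbit in a compact set all the way to the wall and is what makes ``crosses the region and exits'' literally true. Add this estimate and your argument goes through.
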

\begin{proof}
The proof for the case when the system of differential equations is given by the isosceles problem can be found in~\cite{moeckel2012,chen2013}, where their proof does not involve the formula of $V(\theta)$. The two main ingredients of that proof are: (1) On the top surface of the energy manifold, $w=0$, and $\frac{dw}{ds}=\cos^2\theta V'(\theta)$, so that an orbit cannot leave regions $R_{I}$ or $R_{III}$ through the top surface where $V'(\theta)\geq 0$. (2) The variable $w$ has a uniform bound, since $w^2\leq 2\mathcal{W}(\theta)/c(\theta)$ and both $\mathcal{W}(\theta)$ and $c(\theta)$ are are bounded and non-zero. Let $\lambda:=\sqrt{2r+v^2}$, then from~(\ref{newequation}), $\lambda \frac{d\lambda}{ds}=\frac{1}{2}vw^{2}c(\theta)$, $\frac{d\theta}{ds}=wc(\theta)$. Therefore, when reparametrized by $\theta$, the quantity $|\frac{d\lambda}{d\theta}|\leq|\frac{1}{2}w|$ has a uniform bound. 

Our systems have both these ingredients. Therefore, the lemma is true for our systems as well.
\end{proof}

We now list additional notations that will be used throughout the paper.
\begin{equation*}
\begin{aligned}
&\mbox{(Eulerian plane)}: Euler(\bar{\theta},+)=\{(r,v,\bar{\theta},w): w\geq 0\}, \ \ \mbox{where} \ \ \bar{\theta}=k\pi. \\
&\mbox{(Eulerian plane)}: Euler(\bar{\theta},-)=\{(r,v,\bar{\theta},w): w\leq 0\}, \ \ \mbox{where} \ \ \bar{\theta}=k\pi. \\
&\mbox{(Partial-collision plane)}: Partial(\bar{\theta},+)=\{(r,v,\bar{\theta},w): w\geq 0\}, \ \ \mbox{where} \ \ \bar{\theta}=\pi/2+k\pi. \\
&\mbox{(Partial-collision plane)}: Partial(\bar{\theta},-)=\{(r,v,\bar{\theta},w): w\leq 0\}, \ \ \mbox{where} \ \ \bar{\theta}=\pi/2+k\pi. \\
&\mbox{(The line} \ \ \theta=\bar{\theta}): S(\bar{\theta})=\{(r,v,\theta,w)\in P_{1}, v=0,\theta=\bar{\theta}, w\geq 0\}. \\
&\mbox{(Zero velocity curve)}: \mathcal{Z}=\{(r,v,\theta,w)\in P_{1}, v=w=0\}.
\end{aligned}
\end{equation*}
When considering the projection to the $(r,\theta)$-plane, we will call the line $\theta=k\pi$ the \textit{Eulerian line}, and the line $\theta=\pi/2+k\pi$ the \textit{partial-collision line} for any $k\in \mathbf{Z}$.

Moreover, the system~(\ref{newequation}) has symmetries. Let
\begin{equation}
\begin{aligned}
   &R_{1}: (r,v,\theta,w)\rightarrow (r,-v,\theta,-w)  \\
   &R_{2}: (r,v,\theta,w)\rightarrow (r,-v,-\theta,w) \\
   &T_{1}: (r,v,\theta,w)\rightarrow (r,v,\theta+\pi,w) \\ 
   &\mbox{Fix}(R_{i})=\{(r,v,\theta,w): R_{i}(r,v,\theta,w)=(r,v,\theta,w)\}, \ \ i=1,2.
\end{aligned} \label{symmetries}
\end{equation}
Then $\varphi(t)$ be a solution to X if and only $R_{i}\varphi(-t)$ and $T_{1}\varphi(t)$ are solutions to X for $i=1,2$.

\section{Theorems on the existence of periodic orbits}

In~\cite{martinez2013}, two families of Schubart-like periodic orbits, called $\mathcal{Z}-$family and $\mathcal{B}-$family, have been found but not completely rigorously proved. Here the letter $\mathcal{Z}$ stands for the zero velocity curve and $\mathcal{B}$ stands for a partial collision: An orbit in the $\mathcal{Z}$-family has at least one point on the zero velocity curve; an orbit in the $\mathcal{B}$-family has at least one point belonging to the partial collision configuration. We remark that in the isosceles problem, $\mathcal{Z}-$family orbits are identical to  type 1 periodic brake orbits in~\cite{chen2013}; we will refer to this family as $\mathcal{Z}1$-family in this paper. 

In this section, we provide theorems about the existence of $\mathcal{B}-$family, $\mathcal{Z}1-$family, and three additional families of periodic orbits: less-symmetric $\mathcal{B}-$family, $\mathcal{ZB}-$family, and $\mathcal{Z}5$-family. We now describe these orbits by their projection on the $(\theta,r)$-plane. See table~\ref{orbits_table} for these orbits. We denote the period by $T$ and the orbit by $\varphi(t)=(r(t),v(t),\theta(t),w(t))$. First, in a quarter of period, an orbit of $\mathcal{B}-$family starts from $\theta=-\pi/2$ with $v=0$ and reaches an Eulerian line $\theta=0$ or $\theta=-\pi$ orthogonally (i.e., $v=0$) at $T/4$; before a quarter of period, the orbit may cross the line $\theta=-\pi/2$ arbitrary many times. The next quarter orbit is obtained by reflecting the first quarter orbit with respect to the line $\theta=\theta(T/4)$, and the second half orbit is obtained by reflecting the first half orbit with respect to the line $\theta=\theta(T/2)$. Next, we describe less-symmetric $\mathcal{B}-$family. An orbit in this family starts from $\theta=-\pi/2$ with $v=0$ and reaches $\theta=\pi/2$ orthogonally (i.e., $v=0$) at $T/2$; before a half period, the orbit crosses the line $\theta=-\pi/2$ at least once, then crosses the Eulerian line $\theta=0$, and then crosses the line $\theta=\pi/2$ at least one. The second half orbit is obtained by reflecting the first half orbit with respect to the line $\theta=\theta(T/2)$. Next, an orbit in $\mathcal{ZB}-$family starts from the zero velocity curve and reaches a partial-collision line orthogonally at $T/2$. In the half period, the orbit crosses a partial collision line at least once, then crosses the Eulerian line, and then crosses another partial collision line at least once. The next half period is obtained by reflecting the first half orbit with respect to the line $\theta=\theta(T/2)$. Finally, an orbit in $\mathcal{Z}1-$family or $\mathcal{Z}5$-family starts at a brake time and reaches another brake time again at $t=T/2$, then the orbit retraces its first half orbit and reaches a brake time again at $t=T$; while orbits in $\mathcal{Z}1-$family cross the Eulerian line orthogonally at $T/4$, orbits in $\mathcal{Z}5-$family do not orthogonally cross any Eulerian lines or partial collision lines.

It may not be obvious at this point, but it will become clearer soon that the invariant manifolds of $L_{-}$ and $L'_{-}$ play a crucial role in the theorems.

Recall that the unstable manifold of $L_{-}$ and $L'_{-}$ are both one-dimensional and lie on the collision manifold; we denote the branches of $W^{u}(L_{-})$ and $W^{u}(L'_{-})$ that initially have $w\geq 0$ by $\gamma$ and $\gamma'$ respectively. See figure~\ref{four_branches}(a). And whenever they are well-defined, we denote the $v-$coordinates of the intersections of $\gamma$ with $\theta=-\pi/2$ and with $\theta=0$ by $v_{1}$ and $v_{2}$ respectively, and that of the intersections of $\gamma'$ with $\theta=0$ by $v_{3}$. Furthermore, we denote the branches of $W^{u}(L_{-})$, $W^{u}(L'_{-})$ that initially have $w\leq 0$ by $\gamma_{-},\gamma'_{-}$ respectively. See figure~\ref{four_branches}(b). From symmetries,  $\gamma_{-}$ and $\gamma'_{-}$ can be obtained by first reflecting $\gamma$ and $\gamma'$ with respect to the line $\theta=-\pi/2$ and then changing the positive values of $w$ to negative values.

When restricted to the collision manifold, the stable manifold of $L'_{-}$ is one-dimensional, and since $R_{II}$ is flowing-leftward in backward-time, the branch that has $w\geq 0$ can be followed in backward-time to intersect $\theta=-\pi/2$ at, say $v=v_{0}$. When restricted to the energy manifold, the stable manifold of $L'_{-}$ becomes two-dimensional, where the extra one dimension comes from the Lagrange homothetic orbit that connects $L'_{+}$ to $L'_{-}$. 

\textbf{(A crucial surface, Roof I)} The ``quadrant" of the surface $W^{s}(L'_{-})$ that lies in $R_{II}$ will be crucial to our proofs later. We will refer to this quadrant surface as Roof I. See figure~\ref{beta_family_proof}(a) for this surface. One edge of Roof I is the Lagrange homothetic orbit connecting $L'_{+}$ to $L'_{-}$, and the other edge is the unstable branch of $L'_{-}$ that lies in the collision manifold. Since region $R_{II}$ is flowing-leftward in backward-time, Roof I can be followed to reach the left wall of $R_{II}$, intersecting with the wall $Partial(\pi/2,+)$ and forming a curve with two endpoints on the collision manifold. One endpoint arises from the stable branch of $L'_{+}$, and therefore it has $v=-v_{1}$; the other endpoint arises from the stable branch of $L'_{-}$ that lies on the collision manifold, and therefore it has $v=v_{0}$. 

\textbf{(Roof II)} Another surface that will also be crucial to our proofs is the ``quadrant" of the surface $W^{s}(L_{-})$ that lies in region $Q_{I}$. We will refer to the quadrant surface as Roof II. See figure~\ref{beta_family_proof}(b). From symmetries, this surface can be obtained by first reflecting Roof I with respect to the plane $\theta=-\pi/2$ and then changing positive values of the $w$-coordinate to negative values. 

We now are ready to prove the existence of periodic orbits.
\begin{theorem}\label{thm_type1} If $v_{1}<0, v_{2}>0, v_{3}<0$, then there exists a T-periodic brake orbit of the following types:
\begin{itemize}
\item[(i)]\textbf{($\mathcal{Z}1-$family periodic orbits).}
In a quarter of period, the orbit starts at a brake time, then crosses the partial collision line $\{\theta=-\pi/2\}$ $k$ times, and then hits the Eulerian line orthogonally at $t=T/4$, i.e., $(\theta(T/4),v(T/4))=(0,0)$. See table~\ref{orbits_table}(f).
\item[(ii)]\textbf{($\mathcal{Z}5-$family periodic orbits).}
Assume additionally, $v_{2}\neq -v_{3}$. Let $(i,j)$ be any pair of positive integers. In a half period, the orbit starts at a break time, then crosses the partial-collision line $\{\theta=-\pi/2\}$ i times, then crosses the Eulerian line $\{\theta=0\}$, then crosses the partial-collision line $\{\theta=\pi/2\}$ j times, and then reaches zero velocity. See table~\ref{orbits_table}(g).
\end{itemize}

\end{theorem}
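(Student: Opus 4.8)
The plan is a topological shooting argument in the three-dimensional energy manifold $P_{1}$, using the reversing symmetries~(\ref{symmetries}) to reduce each periodic orbit to a single fundamental orbit arc between symmetric configurations. A point of $\mathcal{Z}$ (where $v=w=0$) is fixed by $R_{1}$, while a point with $v=0$ on the Eulerian line $\{\theta=0\}$ is fixed by $R_{2}$; so if an orbit has a brake point at $t_{0}$ then $\varphi(2t_{0}-t)=R_{1}\varphi(t)$, and if it crosses $\{\theta=0\}$ orthogonally at $t_{1}$ then $\varphi(2t_{1}-t)=R_{2}\varphi(t)$. Hence for part~(i) it suffices to construct, for each $k\geq0$, one orbit arc which starts on $\mathcal{Z}$, crosses $\{\theta=-\pi/2\}$ exactly $k$ times, and then meets $\{\theta=0\}$ with $v=0$: reflecting in $\{\theta=0\}$ via $R_{2}$ extends it to a half period terminating at a second brake point, and $R_{1}$-symmetry about that brake point closes the orbit with period $T$, the second half retracing the first. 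For part~(ii) the fundamental arc instead runs from $\mathcal{Z}$ to $\mathcal{Z}$, crossing $\{\theta=-\pi/2\}$ $i$ times, then $\{\theta=0\}$ once, then $\{\theta=\pi/2\}$ $j$ times; $R_{1}$-symmetry about the terminal brake point again yields the periodic orbit.

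For part~(i) I would shoot along the orbits issuing from $\mathcal{Z}$, parametrised by the initial shape $\theta_{0}$ (with $r_{0}=V(\theta_{0})$), flowing forward. By Lemma~\ref{lemma_invariant}, away from the homothetic orbits and the stable manifolds such an orbit transits the regions $R_{I},R_{II},R_{III},Q_{I},Q_{II}$ and reaches $\{\theta=0\}$; I would then track the number $N(\theta_{0})$ of crossings of $\{\theta=-\pi/2\}$ before that return and the value $v(\theta_{0})$ of $v$ there. The decisive separatrices are the ``Roof'' surfaces $W^{s}(L_{-})\cap Q_{I}$ and $W^{s}(L'_{-})\cap R_{II}$ together with their symmetric images, whose traces on $Partial(-\pi/2,\pm)$ are arcs pinned on the collision manifold at the $v$-levels $v_{0}$ and $-v_{1}$: as $\theta_{0}$ is driven toward a value at which the orbit lands on one of these surfaces, the orbit lingers arbitrarily long near the corresponding Lagrange restpoint, so $N(\theta_{0})$ is unbounded and attains every $k$. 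On each maximal $\theta_{0}$-interval where $N\equiv k$, the hypotheses $v_{1}<0,\ v_{2}>0,\ v_{3}<0$ are used to show $v(\theta_{0})$ changes sign across the interval -- near one end the orbit shadows $\gamma=W^{u}(L_{-})$ and returns with $v$ near $v_{2}>0$, near the other it shadows $\gamma'=W^{u}(L'_{-})$ (entering through the Lagrange homothetic orbit) and returns with $v$ near $v_{3}<0$ -- so the intermediate value theorem produces $\theta_{0}$ with $v(\theta_{0})=0$; the sign $v_{1}<0$ guarantees that the $k$ intervening excursions past $\{\theta=-\pi/2\}$ are genuine, each running toward the collision manifold and back.

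Part~(ii) follows the same scheme with two shooting parameters, or equivalently by matching the ``left'' arc (from $\mathcal{Z}$ through $\{\theta=-\pi/2\}$ $i$ times to $\{\theta=0\}$) to the $T_{1}$-translate of the analogous ``right'' arc (from $\{\theta=0\}$ through $\{\theta=\pi/2\}$ $j$ times to $\mathcal{Z}$). Driving the first parameter toward $W^{s}(L_{-})$ makes $i$ arbitrary and the second toward $T_{1}(W^{s}(L_{-}))$ makes $j$ arbitrary. The two arcs must be glued on $\{\theta=0\}$ with consistent $(r,v,w)$, and this is precisely where $v_{2}\neq-v_{3}$ enters: $-v_{3}$ is the $v$-level at which $W^{s}(L'_{+})$ (the $R_{1}$-image of $\gamma'$) meets $\{\theta=0\}$ while $v_{2}$ is that of $\gamma$ there, so $v_{2}\neq-v_{3}$ rules out a heteroclinic $L_{-}\to L'_{+}$ lying in $\{\theta=0\}$ and forces the two shooting curves to cross transversally; an intersection point with the prescribed combinatorial type then gives the arc, and the symmetry reduction of the first paragraph completes the orbit.

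The main obstacle in both parts is the bookkeeping of the crossing count: one must establish that the winding number about $\{\theta=\pm\pi/2\}$ is a well-defined, locally constant function of the shooting parameter which jumps by exactly one across each Roof separatrix and tends to $+\infty$ there, and that all the $v$-readings vary continuously with the parameter up to and including the orbits that touch the collision manifold. The latter relies on the fact that the coordinate and time change of Section~2 regularises both the triple- and partial-collision singularities, so the flow -- and $\gamma,\gamma'$ in particular -- depends continuously on initial data even at $r=0$, together with the uniform bounds on $w$ and on $|d\lambda/d\theta|$ from the proof of Lemma~\ref{lemma_invariant} that prevent orbits from escaping the regions uncontrollably. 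Once these continuity and monotonicity facts are in place, parts (i) and (ii) reduce to elementary intermediate value theorem arguments.
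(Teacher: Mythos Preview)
Your sketch is correct and follows precisely the topological shooting strategy that underlies this theorem; the use of the Roof surfaces as separatrices, the shadowing of $\gamma$ and $\gamma'$ to pin the endpoint $v$-values at $v_{2}>0$ and $v_{3}<0$, and the matching of two image curves on $Euler(0,+)$ under the condition $v_{2}\neq -v_{3}$ are exactly the ingredients. The paper itself does not write any of this out: its proof simply observes that Theorems~5.4 and~6.1 of~\cite{chen2013} (for the isosceles problem) carry over verbatim once the mass-ratio hypothesis there is replaced by the hypotheses $v_{1}<0$, $v_{2}>0$, $v_{3}<0$ (and $v_{2}\neq -v_{3}$), since those inequalities were the only place the mass ratio entered. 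Your outline is thus a faithful reconstruction of the cited argument, and it also mirrors the proofs of Theorems~\ref{thm_schubart2}--\ref{thm_schubart4} given later in the paper.
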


\begin{proof}
The case when the system is given by the isosceles three-body problem has been proved in theorem 5.4 and theorem 6.1 of~\cite{chen2013}. The only three differences between those theorems and the theorem here are as follows. First, $\mathcal{Z}1-$family (respectively $\mathcal{Z}5-$family) periodic orbits here were called type 1 (respectively type 5) periodic brake orbits in ~\cite{chen2013}.
Second, the statement of those theorems requires a condition on the mass of the third body, specifically, $0<m_{3}<\epsilon_{2}\approx 2.661993$, which is used to ensure $v_{1}<0, v_{2}>0, v_{3}<0$, and additionally $v_{2}\neq -v_{3}$ (for type 5 orbits). Here we replace that condition on $m_{3}$ by the condition $v_{1}<0, v_{2}>0$ and $v_{3}<0$. The third difference is only a change of naming; in that theorem, $\{\theta=-\pi/2\}$ is called a binary collision line, while we call the same line a partial collision line. After these modifications, that proof can be applied verbatim to prove our theorem here.

\end{proof}

\begin{theorem}{\textbf{($\mathcal{B}-$family Schubart-like orbits with $k=0$).}}\label{thm_schubart}
If $v_{1}<0$ and $v_{3}<0$, then there exists a Schubart-like orbit with the following properties: In a quarter of period, the orbit starts at the partial-collision line $\theta=-\pi/2$, then hits the Eulerian line orthogonally at $t=T/4$, i.e., $(\theta(T/4),v(T/4))=(0,0)$. See table~\ref{orbits_table}(a).
\end{theorem}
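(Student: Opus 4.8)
The plan is to use a topological shooting argument based on the branches $\gamma$ and $\gamma'$ of the unstable manifolds $W^u(L_-)$ and $W^u(L'_-)$, together with the ``Roof'' surfaces introduced just before the statement. A Schubart-like orbit with $k=0$ in the $\mathcal{B}$-family is a doubly symmetric orbit: the whole orbit is determined by the quarter arc running from the partial-collision line $\theta=-\pi/2$ (hit orthogonally, i.e.\ with $v=0$, which by the symmetry $R_2$ composed with the collision-line reflection lets the arc be continued to the left) to the Eulerian line $\theta=0$ (hit orthogonally, $v=0$), using the symmetries $R_1,R_2$ in \eqref{symmetries} to reflect across $\theta=\theta(T/4)=0$ and then across $\theta=\theta(T/2)$. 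So it suffices to produce one orbit arc that starts on $S(-\pi/2)$ (the half-line $v=0$, $\theta=-\pi/2$, $w\ge 0$ in $P_1$) and arrives at the Eulerian plane $\theta=0$ with $v=0$. Equivalently, I want to connect the symmetric sets $S(-\pi/2)$ and $\{\theta=0,\ v=0\}$ by a piece of flow inside the region between the partial-collision line $\theta=-\pi/2$ and the Eulerian line $\theta=0$, i.e.\ inside $R_{II}\cup R_{III}$.

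The key steps, in order. First, fix the parametrization of the relevant one-parameter family of initial conditions. On $Partial(-\pi/2,+)$, the energy relation leaves a one-parameter family; equivalently parametrize by the entry into $R_{II}$ at the left wall $\theta=-\pi/2$, using the $v$-coordinate there. By Lemma~\ref{lemma_invariant}(ii), forward orbits from the left wall of $R_{II}$ either exit through the right wall $\theta=-\theta^*$, or exit through $\{w=0\}$, or converge to $L'_\pm$. The orbit through the point with $v=v_1$ on $\theta=-\pi/2$ is (by construction of $\gamma$, reflected) exactly the branch that converges to $L'_-$: this is Roof~I's trace on the left wall, with $v=-v_1>0$ under the hypothesis $v_1<0$. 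Second, I track what happens as the $v$-parameter varies on one side of that critical value: orbits just above follow Roof~I, cross into $R_{III}$ (which is flowing-rightward by Lemma~\ref{lemma_invariant}(i)), and run toward $\theta=0$; I need to show that along this family the arrival data at $\theta=0$ sweeps the sign of $v$. At one end, the branch $\gamma'$ itself — the unstable branch of $L'_-$ in $R_{III}$ — reaches $\theta=0$ with $v$-coordinate $v_3<0$ by hypothesis. At the other extreme, orbits that entered $R_{II}$ with very negative $v$ (coming in with large inward velocity) should reach $\theta=0$ still moving outward, i.e.\ with $v>0$; more carefully, I would use that orbits starting on the zero-velocity curve or with $v>0$ large behave monotonically enough, via the $\lambda=\sqrt{2r+v^2}$ estimate $|d\lambda/d\theta|\le|w|/2$ from the proof of Lemma~\ref{lemma_invariant}, to control the $v$-value at $\theta=0$. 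Third, by continuity of the flow and the intermediate value theorem applied to the continuous function $v$-at-$\theta=0$ along this family, there is a parameter value for which the orbit reaches $\theta=0$ with $v=0$; that orbit, reflected by the symmetries, is the desired $\mathcal{B}$-family Schubart-like orbit with $k=0$. One must also check it starts orthogonally on $\theta=-\pi/2$, i.e.\ that the initial point lies on $Partial(-\pi/2,+)$ with $v=0$ — this is where using $S(-\pi/2)$ itself as one endpoint of the shooting family is cleanest: shoot from $S(-\pi/2)$, parametrized by $r$, and show the $\theta=0$ arrival $v$-coordinate changes sign as $r$ ranges over $[0,\infty)$, using $v_1<0$ and $v_3<0$ to pin the two ends (the $r\to 0$ end limits onto the collision-manifold branches involving $v_1$ and $v_3$; the large-$r$ end gives an orbit that recollides before reaching $\theta=0$ or reaches it with definite sign).

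I expect the main obstacle to be the second step: establishing that the arrival $v$-coordinate at $\theta=0$ genuinely changes sign across the shooting family, and that the shooting family is connected and maps continuously all the way to $\theta=0$ without orbits being lost (escaping through $\{w=0\}$ prematurely, recolliding at $r=0$, or getting trapped in $W^s(L'_\pm)$ or $W^s(E_+)$). Lemma~\ref{lemma_invariant} handles the crossing of $R_{II}$ and $R_{III}$ for generic orbits, but the endpoints of the shooting interval sit exactly on the exceptional orbits (Roof~I, the homothetic orbits, $\gamma$, $\gamma'$), so I have to take appropriate one-sided limits and use continuity of the flow near these invariant objects — standard but needing care. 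The hypotheses $v_1<0$ and $v_3<0$ are precisely what forces the two ends of the swept $v$-values to have opposite signs (one end pinned near $v_3<0$ via $\gamma'$, the other forced positive because an orbit entering $R_{II}$ from the partial-collision line with $v=0$ and then crossing $R_{III}$ leftward-flowing-in-backward-time/rightward-flowing considerations, together with $v_1<0$ ruling out the competing degeneracy, leaves it moving outward at $\theta=0$). Once the sign change is in hand, the conclusion is immediate from the intermediate value theorem and the reflection symmetries, and no estimate beyond the $|d\lambda/d\theta|\le |w|/2$ bound already quoted is required.
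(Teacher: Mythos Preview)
Your overall architecture is right — shoot from $S(-\pi/2)$, push the image curve across $R_{II}\cup R_{III}$ to the Eulerian plane $\theta=0$, and apply the intermediate value theorem to the arrival $v$-coordinate — and this is exactly what the paper does. But you have misidentified both endpoints of the shooting interval, and as written your plan does not yield a sign change.

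The positive-$v$ endpoint does \emph{not} come from large $r$ or from ``very negative $v$'' (points of $S(-\pi/2)$ all have $v=0$, so the latter makes no sense here). It comes from the $r=0$ endpoint $q_0$ of $S(-\pi/2)$, which lies on the collision manifold. The orbit of $q_0$ stays on the collision manifold, where the flow is gradient-like in $v$ (so $dv/ds\ge 0$); starting from $v=0$ it therefore arrives at $\theta=0$ with some $\widehat v>0$. Neither $v_1$ nor $v_3$ enters this endpoint at all — your claim that the $r\to 0$ end ``limits onto the collision-manifold branches involving $v_1$ and $v_3$'' is the wrong picture.

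The negative-$v$ endpoint does not come from $r\to\infty$ either; in fact for large $r$ the orbit from $S(-\pi/2)$ need not cross $R_{II}$ (it can leave through $\{w=0\}$), so the map to $\theta=0$ is not even defined there. The correct second endpoint is the \emph{first} intersection $q_I$ of $S(-\pi/2)$ with Roof~I. This intersection exists precisely because $v_1<0$: Roof~I meets the left wall $\theta=-\pi/2$ in a curve whose two boundary points on the collision manifold have $v=v_0<0$ and $v=-v_1>0$, so that curve must cross $\{v=0\}=S(-\pi/2)$. The arc $X_I\subset S(-\pi/2)$ between $q_0$ and $q_I$ is then trapped under Roof~I, hence crosses $R_{II}$ (and then $R_{III}$, which is flowing-rightward). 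Orbits starting near $q_I\in W^s(L'_-)$ shadow $\gamma'$ after passing $L'_-$ and therefore reach $\theta=0$ with $v$ close to $v_3<0$. That is where the hypothesis $v_3<0$ is used. With the endpoints corrected to $q_0$ (giving $\widehat v>0$) and $q_I$ (giving $v_3<0$), the intermediate value argument goes through exactly as you intend; without this correction your ``second step'' has no mechanism to produce the positive sign, and the shooting family you describe is not globally defined.
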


\begin{proof}
Mart\'{i}nez's idea~\cite{martinez2012}, expressed in the new coordinates, is to show that part of the line segment $S(-\pi)$ (which represents Eulerian shapes) can be followed to reach $\theta=-\pi/2$ (which represents partial-collision shapes) and form a continuous curve, whose one endpoint has $v>0$ and the other endpoint has $v<0$. Therefore, there is an orbit that starts with $\theta=-\pi, v=0$ and hits $\theta=-\pi/2$ orthogonally (i.e. $v=0$), which corresponds to a Schurbart-like periodic orbit. In contrast, we will shoot from the line segment $S(-\pi/2)$ and target to hit  $\theta=0$ orthogonally. Figure~\ref{thm_existence1sproof}(a) illustrates the idea of the proof. (\textbf{Remark.} Shooting from $S(-\pi)$ works fine in the new coordinates. Here we choose to shoot from $S(-\pi/2)$ simply because the proof will require fewer new notations and facilitate the proofs of further theorems.)

We start from $S(-\pi/2)$, whose one endpoint, say $q_{0}$, is in the collision manifold, and the other endpoint is at infinity. Our goal now is to construct part of $S(-\pi/2)$ that can be followed across region $R_{II}$. Note that by lemma~\ref{lemma_invariant}(ii), region $R_{II}$ is not flowing-rightward --- not the entire $S(-\pi/2)$ can be followed across $R_{II}$ under the flow.  We consider Roof I surface, whose intersection with the left wall of $R_{II}$ forms an curve with one endpoint having $(r,v)=(0,v_{0})$ and the other endpoint having $(r,v)=(0,-v_{1})$. Since $v_{0}<0<-v_{1}$, $S(-\pi/2)$ and the curve just mentioned must intersect; we denote the first intersection by $q_{I}$. Let $X_{I}$ be the part of $S(-\pi/2)$ that is between $q_{0}$ and $q_{I}$, then $X_{I}$ can be followed across region $R_{II}$ to reach the left wall of $R_{III}$, since Roof I serves as a trapping surface that prevents the image of $X_{I}$ from leaving $R_{II}$ through the surface $\{w=0\}$. Furthermore, since region $R_{III}$ is flowing-rightward, $X_{I}$ can be followed further to cross region $R_{III}$, and then form an arc, called $X_{II}$, on the right wall of $R_{III}$. We now investigate the two endpoints of $X_{II}$. Denote the $v-$coordinate of the intersection of the orbit of $q_{0}$ with $\theta=0$ by $\widehat{v}$. Since the flow in the collision manifold is gradient-like, $\widehat{v}> 0$. One endpoint of $X_{II}$, which arises from the orbit of $q_{0}$, has $v=\widehat{v}$. As for the other endpoint, orbits that start near a neighborhood of $q_{I}\in W^{s}(L'_{-})$ will follow the orbit of $q_{I}$ entering a neighborhood of $L'_{-}$ and then follow the branch $\gamma'$, and therefore the other endpoint of $X_{II}$ is at $v=v_{3}<0$. Since $v_{3}<0<\widehat{v}$, there exists a point on $X_{II}$ that has $v=0$, which corresponds to the desired periodic orbit. This completes the proof.

\end{proof}

\begin{figure}[ht]
\centering
\subfloat[The part of $S(-\pi/2)$ below Roof I can be followed across region $R_{II}$.]{{\includegraphics[width=0.5\textwidth]{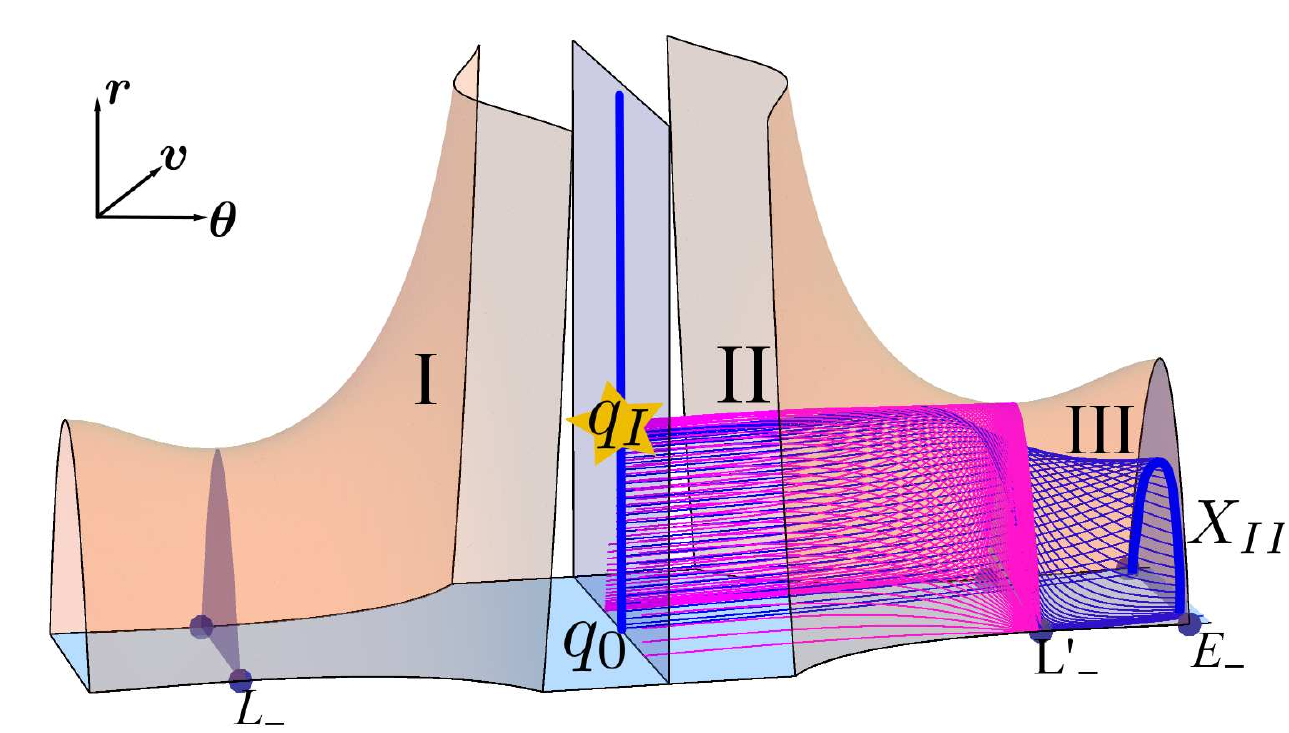}}}\hfill
\subfloat[The two branches $\gamma,\gamma'$ and the orbit starting from $q_{0}$]{{\includegraphics[width=0.5\textwidth]{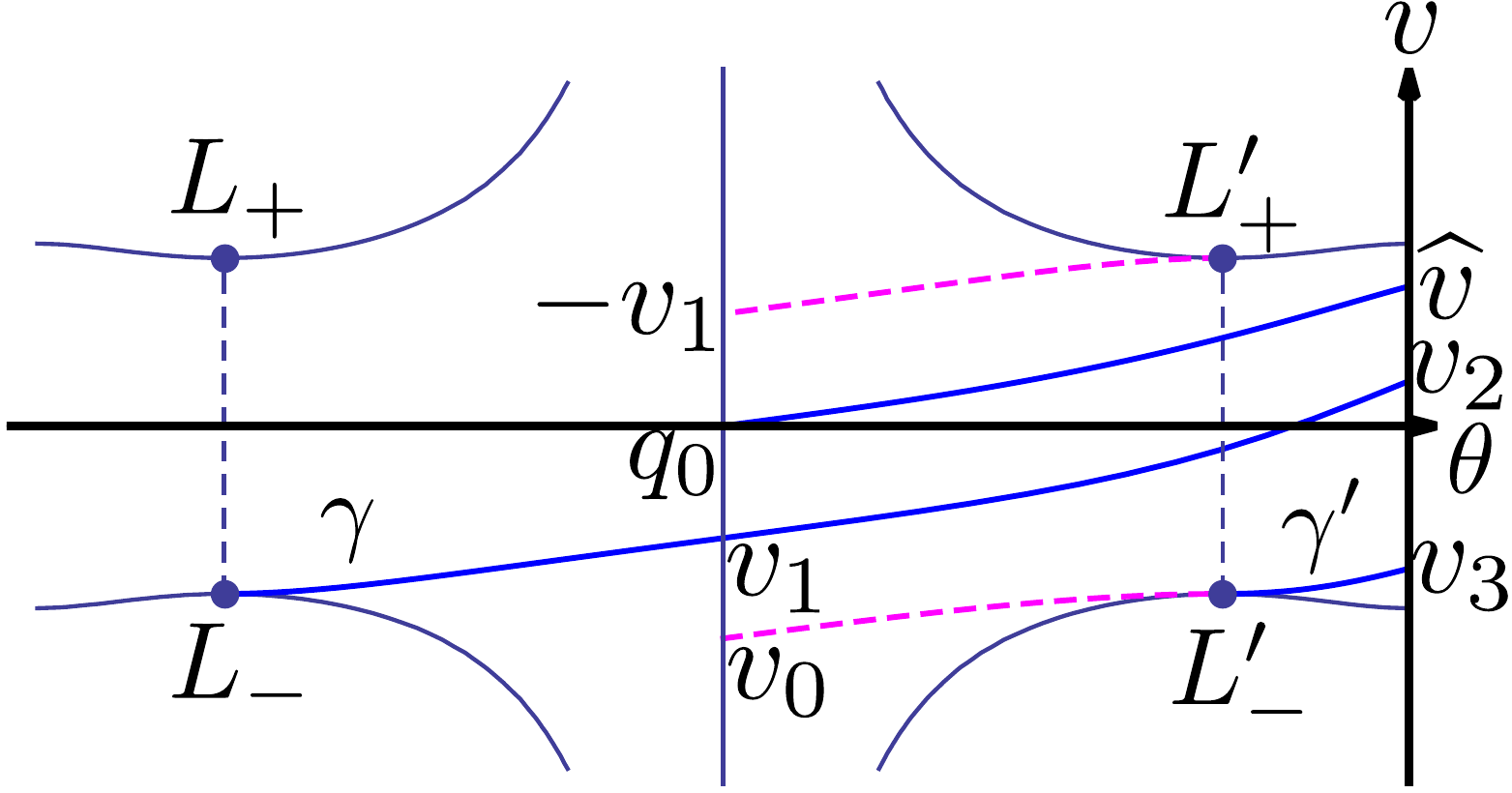}}}
\caption{Illustration for the proof of Theorem~\ref{thm_schubart}}
\label{thm_existence1sproof}
\end{figure}

\begin{theorem}{\textbf{($\mathcal{B}-$family Schubart-like orbits).}}\label{thm_schubart2}
If $v_{1}<0, v_{2}>0, v_{3}<0$, then for any $k\in N$, there exists a Schubart-like orbit with the following properties: In a quarter of period, the orbit starts at the partial-collision line $\theta=-\pi/2$, then crosses the partial collision line $\{\theta=-\pi\}$ $k$ times, and then hits one of the Eulerian lines $\theta=0$ (if $k$ is even) or $\theta=-\pi$ (if $k$ is odd) orthogonally at $t=T/2$. See table~\ref{orbits_table}(a)(b)(c).
\end{theorem}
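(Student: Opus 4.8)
The plan is to bootstrap from Theorem~\ref{thm_schubart} by induction on $k$, iterating the passage across the "walls" $\theta=-\pi/2, -\pi, -3\pi/2,\dots$ and using the regional flow properties of Lemma~\ref{lemma_invariant} together with the appropriate images of Roof I (and its translates/reflections under the symmetries $R_1,R_2,T_1$). Concretely, I would set up a shooting curve starting in $S(-\pi/2)$ and track how it can be followed leftward (in $\theta$) across successive regions of type $R_I$ and $Q_{II}$ until it reaches the Eulerian line $\theta=0$ or $\theta=-\pi$, keeping one endpoint of the evolving arc on the collision manifold (with controllable sign of $v$) and the other endpoint limiting onto a Lagrange restpoint so that it inherits the sign of $v_3<0$.

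The key steps, in order, are as follows. First, I would reinterpret the conclusion of Theorem~\ref{thm_schubart} as providing, for $k=0$, an arc $X$ on the right wall of the relevant region whose two endpoints have $v$-coordinates of opposite sign (one equal to $\widehat v>0$ coming from the collision orbit through $q_0$, the other equal to $v_3<0$ coming from proximity to $W^s(L'_-)$). Second, for the inductive step, I would take the sub-arc of $X$ on which $v$ has a definite sign and where additionally $r$ stays bounded, and follow it across the next region: the hypothesis $v_2>0$ is exactly what guarantees the branch $\gamma$ (or its symmetric image) crosses the next Eulerian line on the correct side, so that a translated copy of Roof I again acts as a trapping surface preventing escape through $\{w=0\}$; Lemma~\ref{lemma_invariant}(i),(iii) then pushes the arc all the way to the next wall. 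Third, I would track the two endpoints through this passage: the collision-manifold endpoint stays on the (invariant, gradient-like) collision manifold and its $v$-value is controlled by the $v_i$'s, while the other endpoint is pinned near a Lagrange restpoint and thus carries the sign of $v_3$; after $k$ such passages the evolved arc reaches the target Eulerian line ($\theta=0$ if $k$ even, $\theta=-\pi$ if $k$ odd) with endpoints of opposite $v$-sign, so by continuity it contains a point with $v=0$. Fourth, I would invoke the symmetries $R_1,R_2$ to reflect this quarter-orbit into a full $T$-periodic Schubart-like orbit with exactly $k$ crossings of $\{\theta=-\pi\}$ in a quarter period, and verify the crossing count is exactly $k$ (not more) by using the monotonicity of $\theta$ in $s$ within each flowing region, $d\theta/ds = w\,c(\theta)$ together with the sign of $w$ being fixed on each region.

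The main obstacle I expect is the bookkeeping for the inductive passage across region $R_{II}$ (and its translates), which is \emph{not} flowing-rightward: there the full wall cannot be carried across, and one must intersect the shooting arc with the image of the correct Roof surface to extract the valid sub-arc, exactly as in the $k=0$ case but now with the Roof positioned at a translated location $\theta\in[-\pi/2-k\pi, -k\pi]$ via $T_1^{k}$ (combined with $R_2$ when the parity forces a reflection). Ensuring that the two endpoints of the sub-arc land on the collision manifold with the required signs of $v$ at \emph{every} stage — and that the limiting endpoint near the Lagrange restpoint genuinely follows $\gamma'$ out with $v\to v_3$ rather than some other branch — is where the hypotheses $v_1<0$, $v_2>0$, $v_3<0$ all get used, and the careful part is checking that the relevant Roof-surface edge curve and the shooting arc must intersect because their endpoints straddle in the $v$-coordinate (the analogue of "$v_0<0<-v_1$" at each level). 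Once this is in place the continuity/intermediate-value argument and the symmetry-completion are routine.
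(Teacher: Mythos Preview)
Your overall shooting-and-intermediate-value strategy is right, but the geometric picture you describe does not match what actually happens, and as written the induction would not go through. You propose to follow the shooting arc \emph{monotonically leftward} through successive walls $\theta=-\pi/2,-\pi,-3\pi/2,\dots$, using $T_1^k$-translates of Roof~I. That is not the mechanism. (Note also that the statement contains a typo: the orbit crosses the \emph{partial-collision} line $\theta=-\pi/2$ $k$ times, not the Eulerian line $\theta=-\pi$; this may have misled you.) The orbits in question \emph{oscillate} around $\theta=-\pi/2$: from $S(-\pi/2)$ one enters $R_{II}$ (rightward, $w\ge 0$), and the piece of the shooting curve \emph{above} Roof~I fails to cross $R_{II}$, exits through $\{w=0\}$ into $Q_{II}$, and returns leftward to $Partial(-\pi/2,-)$. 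From there one enters $Q_{I}$ (leftward, $w\le 0$); the piece \emph{above} Roof~II again fails to cross, exits through $\{w=0\}$ into $R_{I}$, and returns rightward to $Partial(-\pi/2,+)$. The two trapping surfaces are thus Roof~I and Roof~II (a \emph{reflection} of Roof~I, living in $Q_I$), used alternately --- not $T_1$-translates positioned at $\theta\in[-\pi/2-k\pi,-k\pi]$.

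Your endpoint bookkeeping is also off. After the first ``above Roof~I'' step, the image arc on $Partial(-\pi/2,-)$ has one endpoint on the collision manifold at $v=v_1$ (coming from $p_I\in W^s(L'_-)$ via the branch $\gamma'_-$) and the other endpoint at \emph{infinity}, not on the collision manifold; this is the picture one iterates. At each stage the arc runs from $(r,v)=(0,v_1)$ to infinity and therefore must meet the next Roof (whose trace on the wall joins $(0,v_0)$ to $(0,-v_1)$, with $v_0<0<-v_1$). Only at the \emph{final} step, when one takes the piece \emph{below} the relevant Roof and pushes it through to the Eulerian plane, do both endpoints land on the collision manifold --- at $v=v_2$ and $v=v_3$, both arising from Lagrange unstable branches ($\gamma$ or its reflection gives $v_2$, $\gamma'$ or its reflection gives $v_3$). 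The hypothesis $v_2>0$ is used exactly here, to give opposite signs on the target Eulerian line; it is not what makes the Roof act as a trapping surface en route. So the inductive invariant you should carry is ``arc on $Partial(-\pi/2,\pm)$ from $(0,v_1)$ to $\infty$,'' alternating the sign of $w$ and the choice of Roof~I/Roof~II at each step.
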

\begin{proof}
The proof of this theorem is quite similar to that of theorem 5.4 of ~\cite{chen2013}.

The case $k=0$ has been proved in theorem~\ref{thm_schubart}. We start with the case $k=1$. Figure~\ref{beta_family_proof} illustrates the idea of the proof. Again we shoot from $S(-\pi/2)$. Recall that in the proof for the case $k=0$, we choose $X_{I}$ to be the part of $S(-\pi/2)$ that is below Roof I such that $X_{I}$ can be followed across $R_{II}$. Now for the case $k=1$, we construct $X'_{I}$ to be the part of $S(-\pi/2)$ that is above Roof I such that $X'_{I}$ can \textbf{not} be followed across $R_{II}$. Specifically, let $p_{I}$ be the last intersection of $S(-\pi/2)$ with the Roof I, and let $X'_{I}$ be the part of $X_{I}$ that is between $p_{I}$ and infinity. Then points on $X'_{I}$ will not cross $R_{II}$; instead, they leave $R_{II}$ through the top boundary surface $\{w=0\}$, then enter $Q_{II}$, and then reach the left wall of $Q_{II}$, forming an arc, say $X_{II}$. One of the endpoints of $X_{II}$ has $v=v_{1}$, which arise from $\gamma'_{-}$, and the other endpoint is at infinity. To construct the part of $X_{II}$ that can cross region $Q_{I}$, which is flowing-rightward in backward-time, we consider Roof II, whose intersection with $Partial(-\pi/2,-)$ forms an arc whose endpoints are at $v=v_{0}$ and at $v=-v_{1}$. Let $X'_{II}$ be the part of $X_{II}$ that is below the arc just mentioned, than $X'_{II}$ can be followed across region $Q_{II}$, since Roof II serves as a trapping surface here. Points on $X'_{II}$ can furthermore reach $Euler(-\pi,-)$, forming an arc, say $X_{III}$, whose endpoints are at $(r,v)=(0,v_{2})$ and $(r,v)=(0,v_{3}).$ This proves the case $k=1$.

If instead of considering $X'_{II}$, we consider $X''_{II}$ to be the part of $X_{II}$ that is above Roof II, then $X''_{II}$ will leave $Q_{I}$ through the top surface $\{w=0\}$, enter $R_{I}$, and reach the right wall of $R_{I}$, forming an image curve on $Partial(\pi/2,+)$, with the endpoints at $(r,v)=(0,v_{1})$ and infinity respectively. The situation now is quite similar to that when we have $S(-\pi/2)$ --- we obtain a curve that connects the collision manifold and infinity. By considering the relative position of the image curves with respect to Roof I or II, one may construct part of the image curves that travels between $\{w\geq 0\}$,$\{w\geq 0\}$ as many times as desired and then forms a curve on $Euler(0,+)$ or $Euler(-\pi,0)$, with the endpoints at $v=v_{2}>0$ and $v_{3}<0$. This proves the general case $k\in\mathbf{N}$.

\end{proof}

\begin{figure}[ht]
\centering
\subfloat[Points above Roof I will leave $R_{II}$ through the top surface $\{w=0\}$.]{{\includegraphics[width=0.5\textwidth]{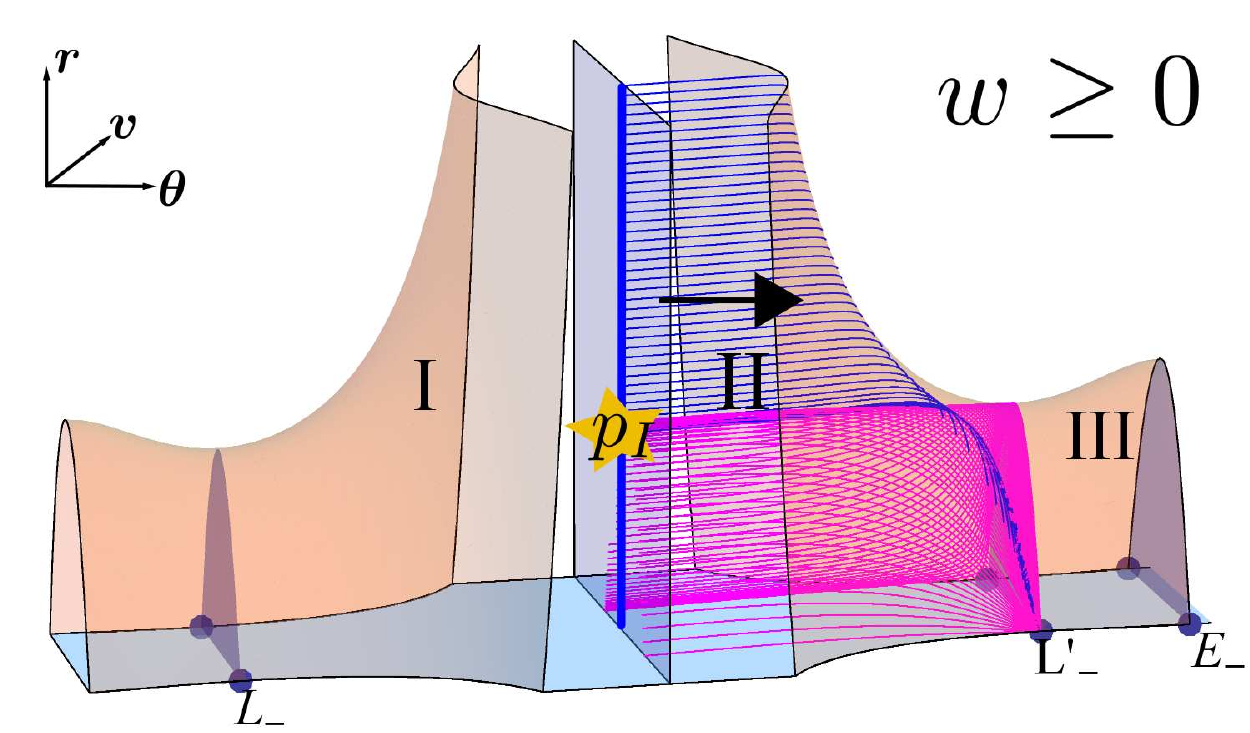}}}\hfill
\subfloat[Points below Roof II will cross $Q_{I}$ and then furthermore reach $Euler(-\pi,-)$.]{{\includegraphics[width=0.5\textwidth]{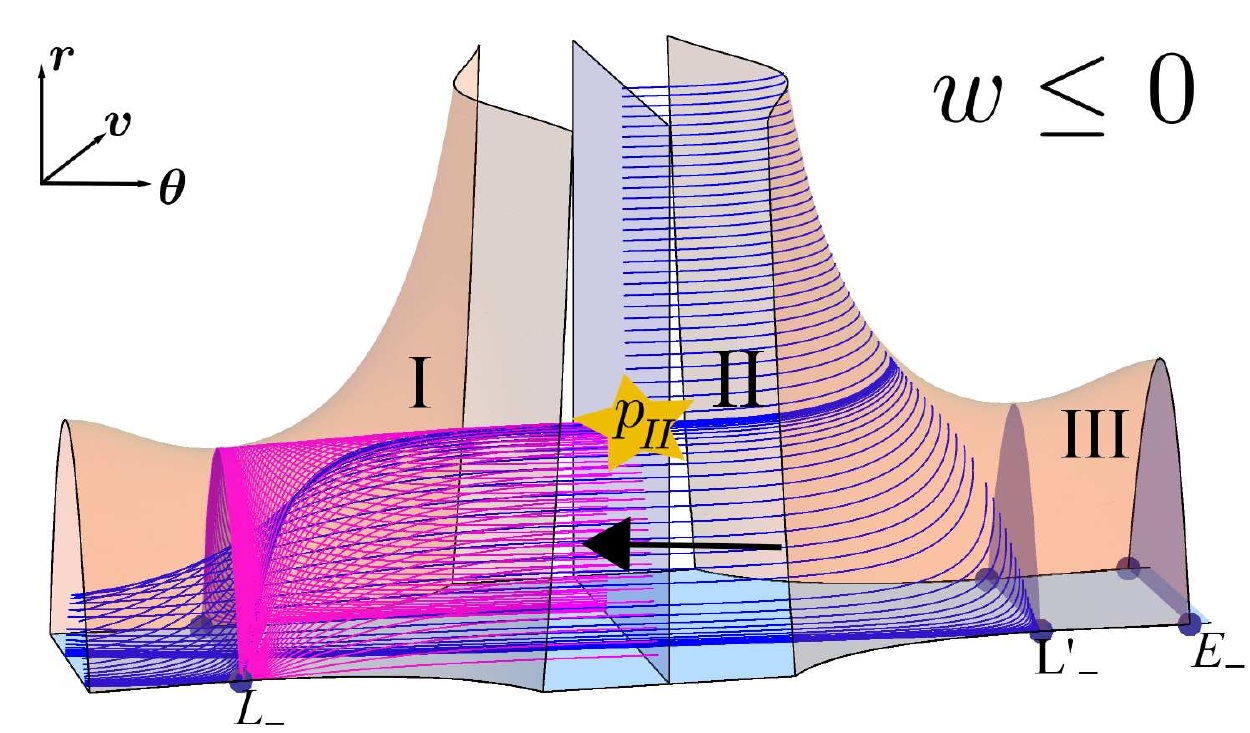}}}
\caption{This figure illustrates the proof of Theorem~\ref{thm_schubart3}}
\label{beta_family_proof}
\end{figure}

\begin{figure}[ht]
\centering
\subfloat[]{{\includegraphics[width=0.5\textwidth]{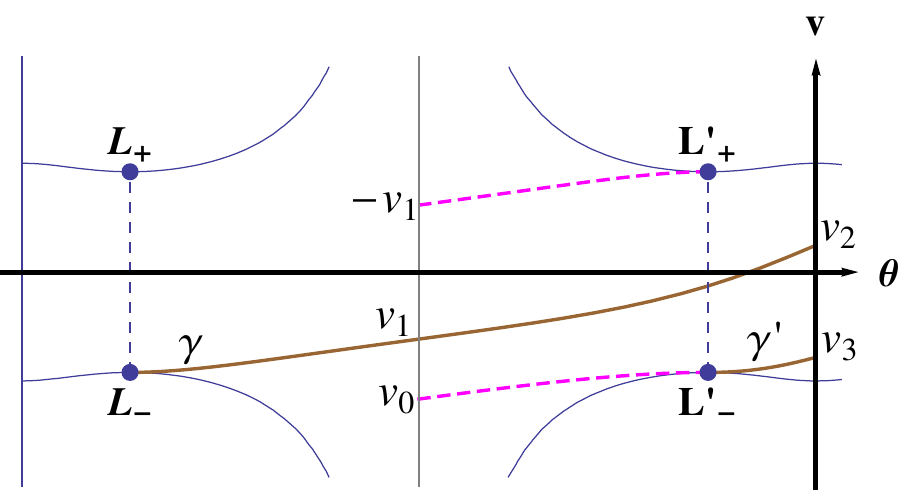}}}\hfill
\subfloat[]{{\includegraphics[width=0.5\textwidth]{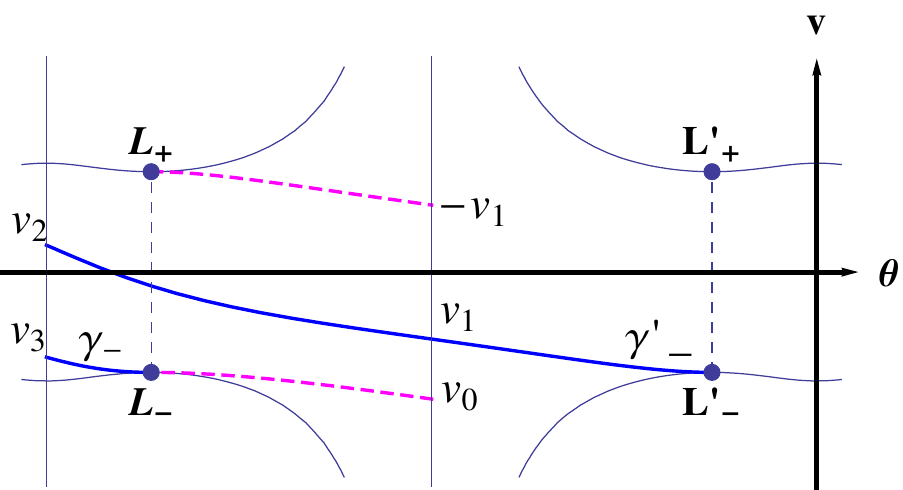}}}
\caption{The branches $\gamma$,$\gamma'$ lie in $w\geq 0$, while the branches $\gamma_{-}$,$\gamma'_{-}$ lie in $w\leq 0$.}
\label{four_branches}
\end{figure}

\begin{theorem}{\textbf{(Less-symmetric $\mathcal{B}-$family Schubart-like orbits).}}\label{thm_schubart3}
Assume $v_{1}<0, v_{2}>0, v_{3}<0$, and $v_{2}\neq -v_{3}$. For every pair of positive integers $(i,j)$, there exists a Schubart-like orbit with the following property: In a half period, the orbit starts with $i+1$ partial-collisions, then crosses the Eulerian line, followed by $j+1$ continuous partial-collisions. See table~\ref{orbits_table}(d).
\end{theorem}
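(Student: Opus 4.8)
The plan is to rerun the topological-shooting construction from the proof of Theorem~\ref{thm_schubart2}, but to shoot from $S(-\pi/2)$ all the way \emph{through} an Eulerian line and on to the partial-collision line $\theta=\pi/2$, rather than stopping at the Eulerian line. A point of $S(-\pi/2)$ has $v=0$, so the orbit through it is invariant under the reflection in~(\ref{symmetries}) that fixes $\{v=0,\ \theta=-\pi/2\}$; if in addition the orbit can be steered to reach $\theta=\pi/2$ with $v=0$ at some time $T/2$, it is also invariant under the reflection fixing $\{v=0,\ \theta=\pi/2\}$. The composition of these two reflections is the translation $T_{1}^{2}$, so the orbit closes up modulo $\theta\mapsto\theta+2\pi$, which is the identity on physical configurations; thus it is a genuine $T$-periodic orbit, with exactly the bounce pattern prescribed along the way. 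Hence it suffices to produce, for each pair of positive integers $(i,j)$, an orbit starting on $S(-\pi/2)$ that visits $\theta=-\pi/2$ exactly $i+1$ times, then crosses an Eulerian line, then visits $\theta=\pi/2$ exactly $j+1$ times, the last visit being orthogonal.

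First I would build the left half exactly as for Theorem~\ref{thm_schubart2}. Starting from $S(-\pi/2)$ (one endpoint $q_{0}$ on the collision manifold, the other at infinity) I alternately use the surfaces Roof~I $=W^{s}(L'_{-})\cap R_{II}$ and Roof~II $=W^{s}(L_{-})\cap Q_{I}$ as trapping surfaces: the part of the current arc lying above a Roof is forced by Lemma~\ref{lemma_invariant} to turn around at $\{w=0\}$ and return to $\theta=-\pi/2$, producing one further visit, while the part below it passes the adjacent Lagrange restpoint. Iterating this $i$ times carves out a sub-arc $X$ whose orbits visit $\theta=-\pi/2$ exactly $i+1$ times, then pass $L'_{-}$ or $L_{-}$, cross a flowing-rightward region, and land on an Eulerian plane $Euler(\cdot,\pm)$, with the Eulerian line and the sign of $w$ determined by the parity of $i$ as in Theorem~\ref{thm_schubart2}. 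Here the hypotheses $v_{1}<0,\ v_{2}>0,\ v_{3}<0$ are precisely what make each intermediate arc straddle the Roof it is tested against, so that the above/below decomposition is nonempty and the induction runs; they also identify the two endpoints of $X$ as one continued from $q_{0}$ (with $v=\widehat v>0$, the collision flow being gradient-like in $v$) and one continued along $\gamma$ or $\gamma'$ from the final Roof (with $v$ equal to $v_{2}$ or $v_{3}$, hence negative). See figure~\ref{beta_family_proof}.

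Next I would carry $X$ across the Eulerian line and run the mirror-image construction near $\theta=\pi/2$. By the symmetry $R_{2}$ together with a power of $T_{1}$, the regions and the Roof surfaces around $\theta=\pi/2$ are the reflections of those around $\theta=-\pi/2$, their walls meeting the reflected Roofs in curves with endpoints at the reflected values $-v_{0},-v_{1},-v_{2},-v_{3}$. Iterating the trapping argument $j$ more times extracts a sub-arc $X'\subset X$ whose orbits visit $\theta=\pi/2$ exactly $j+1$ times and then land on $Partial(\pi/2,\pm)$, one endpoint still continued from $q_{0}$ (with $v>0$) and the other continued along a reflected $\gamma$ or $\gamma'$ (with $v<0$). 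Since the two endpoints of $X'$ have $v$-coordinates of opposite sign, the intermediate value theorem gives a point of $X'$ with $v=0$; the orbit through it, together with the reflection argument of the first paragraph, is the asserted less-symmetric $\mathcal B$-family Schubart-like orbit.

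The main obstacle --- and the only place the extra hypothesis $v_{2}\neq -v_{3}$ is used --- is the handoff between the two halves: one must verify that the arc $X$ delivered by the left-hand construction has two \emph{distinct} endpoints and genuinely straddles the first reflected Roof, so that the right-hand induction has a nondegenerate arc to start from. This is exactly what $v_{2}\neq -v_{3}$ encodes, namely that the Lagrange unstable branch controlling the right half is not the reflection of the one controlling the left half; if $v_{2}=-v_{3}$ the two relevant branch-endpoints coincide, the shooting arc can collapse to a point, and one recovers only the symmetric orbits of Theorem~\ref{thm_schubart2}. What remains is the routine (if lengthy) bookkeeping of which plane $Partial(\pm\pi/2,\pm)$ or $Euler(\cdot,\pm)$ the image arc lies in, with which sign of $w$, and of the signs of its two endpoints, through all $i+j$ iterations --- entirely parallel to, and no harder than, the argument already given for Theorem~\ref{thm_schubart2}.
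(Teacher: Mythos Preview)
Your approach is genuinely different from the paper's, and as written it has both a concrete error and a real gap.

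The paper does \emph{not} push a single shooting arc from $S(-\pi/2)$ across the Eulerian line and on to $\theta=\pi/2$. Instead it runs the construction of Theorem~\ref{thm_schubart2} twice and intersects the two resulting curves in the middle. For even $i$ the left construction produces a curve $\Gamma_{i}\subset Euler(0,+)$ with endpoints on the collision manifold at $v=v_{2}$ and $v=v_{3}$; for odd $j$ it produces a curve $\Gamma_{j}\subset Euler(-\pi,-)$ with the same endpoint values. Applying the symmetry $R_{1}T_{1}$ to $\Gamma_{j}$ gives a curve on the \emph{same} plane $Euler(0,+)$ with endpoints at $v=-v_{2}$ and $v=-v_{3}$, and this curve, followed forward, performs the right-hand bounces. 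Because $v_{3}<0<v_{2}$ and $v_{2}\neq -v_{3}$, the four boundary points $v_{2},v_{3},-v_{2},-v_{3}$ interlock on the collision circle bounding $Euler(0,+)$, so $\Gamma_{i}$ and $R_{1}T_{1}\Gamma_{j}$ must cross. The crossing point gives the desired orbit. This linking argument is precisely where $v_{2}\neq -v_{3}$ enters, and it is sharp rather than a vague nondegeneracy.

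Your single-arc approach misidentifies the endpoints. For $i\geq 1$ neither endpoint of the arc $X$ on the Eulerian plane is ``continued from $q_{0}$'': as the proof of Theorem~\ref{thm_schubart2} shows, after the first Roof split both endpoints lie on the collision manifold, at $v=v_{2}$ (from $\gamma$) and $v=v_{3}$ (from $\gamma'$). In particular your phrase ``$v_{2}$ or $v_{3}$, hence negative'' is wrong, since $v_{2}>0$. More seriously, to continue $X$ into the right half and iterate Roof splittings there you would have to know where the collision-manifold orbits $\gamma$ and $\gamma'$ go past $\theta=0$ and whether the continued arc straddles the boundary curves of the translated Roofs on $Partial(\pi/2,+)$; this is not determined by the hypotheses $v_{1}<0,\ v_{2}>0,\ v_{3}<0,\ v_{2}\neq -v_{3}$, and your ``handoff'' paragraph does not supply it. The paper's intersection argument sidesteps this entirely by never following any arc beyond the Eulerian plane.
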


\begin{proof}
The proof is quite similar to the proof of theorem 6.1 of~\cite{chen2013}.

Here we only prove the case when $i$ is even and $j$ is odd; the other cases can be proved similarly. In the proof of theorem~\ref{thm_schubart2}, for even $i$, we construct a subset of $S(-\pi/2)$, say $S_{i}$, that can be followed to cross $Partial(-\pi/2,\pm)$ $i$ times to reach $Euler(0,+)$, and form a curve, say $\Gamma_{i}$, having endpoints at $v=v_{2},v_{3}$ in the collision manifold. For odd $j$, we construct a subset of $S(-\pi/2)$, say $S_{j}$, that can be followed to cross $Partial(-\pi/2,\pm)$ $j$ times to reach $Euler(-\pi,-)$, and form a curve, said $\Gamma_{j}$, having endpoints at $v=v_{2},v_{3}$ in the collision manifold. Recall that the system has symmetries $R_{1},R_{2}$, and $T_{1}$ as defined in~(\ref{symmetries}). By symmetries and reversibility, $R_{1}T_{1}\Gamma_{j}$ can be followed to cross $Partial(\pi/2,\pm)$ $j-1$ times to form $R_{1}T_{1}S_{i}$. The curve $R_{1}T_{1}\Gamma_{j}$ lies on $Euler(0,+)$, with its endpoints at $(r,v)=(0,-v_{2})$ and $(0,-v_{3})$. Since $v_{2}>0, v_{3}<0$, and $v_{2}\neq -v_{3}$, the two curves $\Gamma_{i}$ and $R_{1}T_{1}\Gamma_{j}$ must intersect. An intersection point yields the desired orbit. 

\end{proof}

\begin{theorem}{\textbf{($\mathcal{ZB}-$family Schubart-like orbits).}}\label{thm_schubart4}
Assume $v_{1}<0, v_{2}>0, v_{3}<0$, and $v_{2}\neq -v_{3}$. For every pair of positive integers $(i,j)$, $i,j\geq 1$, there exists a periodic orbit with the following property: In a half period, the orbit starts at a brake time, then crosses the partial-collision line $\{\theta=-\pi/2\}$ i times, then crosses the Eulerian line, then crosses the partial-collision line $\{\theta=\pi/2\}$ $j$ times, and then hits the partial-collision line $\{\theta=\pi/2\}$ orthogonally. See table~\ref{orbits_table}(e).
\end{theorem}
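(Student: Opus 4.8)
The plan is to run a topological shooting argument in the spirit of Theorems~\ref{thm_type1}, \ref{thm_schubart2} and \ref{thm_schubart3}, reducing the whole statement to the intersection of two arcs drawn on the single plane $Euler(0,+)$. A half-period of a $\mathcal{ZB}$-orbit should run from a point of the zero velocity curve $\mathcal{Z}=\mathrm{Fix}(R_{1})\cap P_{1}$ (the brake time, $t=0$) to a point of the ray $S(\pi/2)=\{(r,0,\pi/2,w)\in P_{1}: w\geq 0\}$ (the orthogonal partial collision, $t=T/2$); the latter is the fixed set of the reversing symmetry obtained by composing $R_{2}$ with $T_{1}$. Once such a half-orbit is produced, reflecting it across $\mathcal Z$ by $R_{1}$ and across $S(\pi/2)$ by that $R_{2}T_{1}$-type symmetry, and using $T_{1}$ to identify the ends, closes it up into the desired periodic orbit, so everything is in producing the half-orbit.

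First I would build the left-hand arc. I would copy the construction behind Theorem~\ref{thm_type1}(i) (the type-$1$ brake orbits of~\cite{chen2013}): push a sub-arc of $\mathcal Z$ forward through the regions of Lemma~\ref{lemma_invariant}, using Roof I, Roof II and their $R_{1},R_{2},T_{1}$-images as trapping surfaces which, at each stage, split the current arc into the portion that crosses a wall and the portion that is reflected back off $\{w=0\}$, picking up one more crossing of $\{\theta=-\pi/2\}$. After exactly $i$ such reflections one follows the surviving sub-arc $Z_{i}\subset\mathcal Z$ across $R_{III}$ onto $Euler(0,+)$, producing a connected arc $\Gamma_{A}$. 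Because the two endpoints of $Z_{i}$ lie on the Roof surfaces, that is on $W^{s}(L_{-})\cup W^{s}(L'_{-})$, their forward orbits enter a Lagrange restpoint and then shadow the branches $\gamma$ and $\gamma'$; hence the two endpoints of $\Gamma_{A}$ sit at $v=v_{2}>0$ and $v=v_{3}<0$ on the collision manifold $\{r=0\}$. The hypotheses $v_{1}<0,\ v_{2}>0,\ v_{3}<0$ are exactly what place the trapping surfaces so that $Z_{i}$ is nonempty, as in~\cite{chen2013}.

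For the right-hand arc I would run the $\mathcal B$-family construction of Theorems~\ref{thm_schubart}--\ref{thm_schubart2} \emph{backward} from $S(\pi/2)$. Since $S(\pi/2)$ is fixed by the $R_{2}T_{1}$-type symmetry, following it back for $j$ crossings of $\{\theta=\pi/2\}$ onto $Euler(0,+)$ is, up to that symmetry, the identical surviving-sub-arc argument carried out on the $\theta>0$ side (this is the $k$-crossing step of Theorem~\ref{thm_schubart2} read in reverse time). This gives a connected arc $\Gamma_{B}\subset Euler(0,+)$ whose endpoints, being the symmetry images of $v_{2}$ and $v_{3}$, lie at $v=-v_{2}<0$ and $v=-v_{3}>0$ on $\{r=0\}$. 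Now $Euler(0,+)$ is a topological disk whose boundary contains the collision segment $\{r=0\}$, and on that segment the four endpoints occur in interleaved cyclic order --- $-v_{2},v_{3},-v_{3},v_{2}$ or $v_{3},-v_{2},v_{2},-v_{3}$, according to the sign of $v_{2}+v_{3}$ --- and they genuinely interleave \emph{precisely because} $v_{2}\neq-v_{3}$. A Jordan-arc argument then forces $\Gamma_{A}\cap\Gamma_{B}\neq\emptyset$, and a point of the intersection is the $t=0$ and $t=T/2$ ``seam'' of a half-orbit with exactly $i$ crossings of $\{\theta=-\pi/2\}$, one crossing of $\{\theta=0\}$, $j$ crossings of $\{\theta=\pi/2\}$, and a final orthogonal hit of $\{\theta=\pi/2\}$.

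The hard part is not a single estimate but the bookkeeping inside the two sub-arc constructions: making the number of crossings of $\{\theta=\pm\pi/2\}$ \emph{exactly} $i$ and $j$, rather than merely at least that many, forces one to select the right component of the current arc relative to a Roof surface at every reflection and to check that it remains trapped; and, crucially for the last step, one must verify that $\Gamma_{A}$ and $\Gamma_{B}$ both land on the sheet $Euler(0,+)$ and not on $Euler(0,-)$, so that the interleaving argument applies. Both points are controlled exactly by the region analysis of Lemma~\ref{lemma_invariant} as in~\cite{chen2013}; I expect that, as with the earlier theorems of this section, the final write-up can be compressed to a few lines by quoting those arguments and only flagging the two modifications --- the new target $S(\pi/2)$ and the use of $v_{2}\neq-v_{3}$ in the terminal intersection.
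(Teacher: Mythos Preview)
Your proposal is correct and matches the paper's own proof essentially verbatim: the paper also builds the left arc $\Gamma_{i}$ by pushing a sub-arc of $\mathcal{Z}$ forward through $i$ crossings (quoting the type-1 brake construction of~\cite{chen2013}) to land on $Euler(0,+)$ with endpoints $v_{2},v_{3}$, builds the right arc as $R_{1}T_{1}$ applied to the $\mathcal{B}$-family curve $\Gamma_{j}$ from Theorem~\ref{thm_schubart3} (which is exactly your ``backward from $S(\pi/2)$'' arc, rephrased via the reversing symmetry) with endpoints $-v_{2},-v_{3}$, and then intersects the two using $v_{2}\neq -v_{3}$. The paper's write-up is indeed compressed to a few lines, just as you anticipated.
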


\begin{proof}
Here we prove the case when both $i$ and $j$ are odd; the other cases can be proved similarly. The proof is almost the same as that of theorem~\ref{thm_schubart3}; only the curve $S_{i}$ is constructed differently. In the proof of type 1 periodic brake orbits, which is theorem 5.4 of ~\cite{chen2013}, we construct a part of the zero velocity curve $\mathcal{Z}$, say $S_{i}$, that can be followed to cross $Partial(-\pi/2,\pm)$ $i$ times to reach $Euler(0,+)$, and form a curve, say $\Gamma_{i}$, having endpoints at $v=v_{2},v_{3}$ in the collision manifold. Therefore, the curve $R_{1}T_{1}S_{j}$ constructed in~\ref{thm_schubart3}, which also lies on $Euler(0,+)$ with its endpoints at $v=-v_{2},-v_{3}$, must intersect $\Gamma_{i}$. An intersection point yields the desired orbit.
\end{proof}

We end this section with another family of periodic orbits in the equal-mass isosceles three-body problem. See table~\ref{orbits_table}(h)(i). These orbits are numerically founded by shooting from $S(-\pi/2)$ and targeting to hit $\theta=\pi/2$ or $\theta=-3\pi/2$ orthogonally but not  orthogonally hitting an Euler line. We are unable to prove their existence at this moment. The difficulty is not to prove the existence of orbits that hit two partial-collision lines orthogonally, but is to distinguish them from $\mathcal{B}-$family orbits.

\section{The Behaviours of $\gamma$ and $\gamma'$}
From the previous section, we can see that the two branches $\gamma$ and $\gamma'$ play a crucial role in the existence proofs of periodic orbits. Recall that we denote the $v$-coordinates of the intersections of $\gamma$ with $\theta=-\pi/2$, $\theta=0$ by $v_{1},v_{2}$ respectively, and that of $\gamma'$ with $\theta=0$ by $v_{3}$. See figure~\ref{four_branches}(a). Since regions $R_{I}$ and $R_{III}$ are flowing rightward, $v_{1}$ and $v_{3}$ are always well-defined. As for $v_{2}$, it is well-defined provided $v_{1}<0$; this can be seen by considering the stable branches of $L'$ and $L'_{-}$ which trap $\gamma$ in between, and hence $\gamma$ crosses region $II$.

While the new coordinates facilitate the proofs for the existence theorems of periodic orbits, it is, however, easier to estimate the branches $\gamma$ and $\gamma'$ in Devaney's coordinates. The branch $\gamma'$ in the new corresponds to $\gamma'$ in the Devaney's coordinates, and hence the identical notations. On the other hand, the translation of the branch $\gamma$ in the new coordinate, i.e., $T_{1}\gamma$, corresponds to $\gamma''$ in Devaney's coordinates. Therefore, studying $\gamma$,$\gamma'$ in the new coordinates is equivalent to studying $\gamma''$,$\gamma'$ in Devaney's coordinates.

We will provide sufficient conditions that guarantee $v_{1}<0, v_{3}<0$, along with a lower bound for $v_{1}$ that guarantees $v_{2}>0$.
In the case when $V(\phi)$ has exactly three critical points at $\phi=\phi_{L},\phi_{m},\phi_{R}$, Mart\'{i}nez's conditions are as follows~\cite{martinez2012,martinez2013}:
\begin{align*}
&\cos(\phi_{b}-\phi)\widehat{V}(\phi)-\sin(\phi_{b}-\phi)\widehat{V}'(\phi)>0, \ \ \phi\in [\phi_{R}, \phi_{b}]\tag{M1}\\
&3V(\phi_{R})-2V(\phi_{m})>0, \tag{M2} \\
&G(\phi):=\frac{1}{\phi_{R}-\phi_{m}}-\frac{\phi-\phi_{m}}{2}\sqrt{\frac{2(\phi_{R}-\phi)}{\phi_{R}-\phi_{m}}}+2\frac{V'(\phi)}{V(\phi_{m})}>0,\ \ \phi\in [\phi_{m},\phi_{b}] \tag{M3} \\
&\mbox{The orbit} \ \ \phi(t;P_{m})\ \ \mbox{runs up to} \ \ B_{b}^{+}(B_{a}^{+}) \ \ \mbox{for positive time},  \tag{M4}
\end{align*}
where $P_{m}=(r,v,\phi,w)=(0,0,\phi_{m},+)$ lies on the collision manifold, and $B_{a}^{+}$, $B_{b}^{+}$ represent two of the four holes of the collision manifold that have $v\geq 0$. Specifically, Mart\'{i}nez's existence proof of Schubart-like periodic orbits that have only one singularity in a half period requires conditions (M1,M2,M3), and the existence proof of Schubart-like periodic orbits that have many singularities requires an additional condition (M4). However, not every condition has been successfully verified. For the planar double-polygon problem, the condition (M3) fails, and the condition (M2) is not rigorously proved. (In ~\cite{martinez2012}, it is proved that (M2) is true for $n$ large enough and numerically verified only for $3\leq n\leq 50$.) In~\cite{martinez2013}, the condition (M4) is not proved but only supported by numerical evidence in all her three problems.
 
In contrast, in the same setting, our conditions are as follows:
\begin{align*}
&\phi_{a}=-\pi/2, \phi_{b}=\pi/2,W'(\phi)\leq 0, \ \ \phi\in [0, \frac{\pi}{2}), \tag{N1}\\
&V(\phi_{R})-(\sin^{2}\frac{\phi_{R}-\phi_{m}}{2})V(\phi_{m})>0, \tag{N2} \\
& \mbox{In addition to} \ \ (N1), W(\frac{\pi}{2})=\frac{S_{n}}{4}, \mbox{and} \ \  |\frac{W'(\phi)}{W(\phi)}|\leq\frac{4}{5} \ \ \mbox{for} \ \  \phi\in[\frac{\pi}{4},\frac{\pi}{2}),\tag{N3} \\
&\mbox{Replace} \ \ |\frac{W'(\phi)}{W(\phi)}|\leq\frac{4}{5}\ \ \mbox{in (N3) by the condition in Lemma 4.1(iv)}, \tag{N3'} \\
& v_{2}\neq -v_{3}, \ \ \mbox{or equivalently,}\\
& \ \ \gamma \ \ \mbox{is not a heteroclinic connection between} \ \ L_{-} \ \ \mbox{and} \ \ T_{1}(L_{+}), \tag{N4}
\end{align*}
where $S_{n}=\sum_{k=1}^{n}\csc\pi k/n$.
We remark that the conditions (M1) and (N1) are equivalent when $\phi_{a}=-\pi/2, \phi_{b}=\pi/2$; this is apparent from the proof for proposition 1 in~\cite{martinez2012}.
We also remark that, in our examples, the condition (N2) is looser than (M2), since we will show that $\phi_{R}-\phi_{m}\leq \pi/4$.

In the following lemmas, we will show that the condition (N1) implies $v_{1}<0$, that (N2) implies $v_{3}<0$, and that the condition (N3) or (N3') implies $v_{2}>0$. As a result, the conditions (N1,N2) ensure the existence of $\mathcal{B}-$family Schubart-like periodic orbits with $n=0$. The conditions (N1,N2,N3 or N3') ensure the existence of $\mathcal{B}-$family and $\mathcal{Z}1-$family periodic orbits. If moreover, $v_{2}\neq -v_{3}$, then there exist $\mathcal{ZB}-$family, less-symmetric $\mathcal{B}-$family, and $\mathcal{Z}5-$family periodic orbits.

\begin{lemma}\label{lemma_v1}
Assume $\phi_{a}=-\pi/2$, $\phi_{b}=\pi/2$, $\phi_{R}\in (0,\pi/4]$, $W(\pi/2)=S_{n}/4$. Then
\begin{itemize} 
\item[(i)] \textbf{(N1)} If $W'(\phi)\leq 0$ in $[0,\frac{\pi}{2})$, then $v_{1}<0$.
\item[(ii)]\textbf{(N3)} If moreover, $\alpha:=\frac{4}{5}\leq |\frac{W'(\phi)}{W(\phi)}|$ for $\phi\in [\frac{\pi}{4},\frac{\pi}{2})$, then $\frac{\beta}{2}\sqrt{S_{n}}<v_{1}<0$, where $\beta= -1.32$.
\item[(iii)] Furthermore, (ii) implies that $v_{2}>0$.
\item[(iv)]\textbf{(N3')} If the condition for (ii) does not hold, let $g_{3}(\phi)$ be the solution of~(\ref{secondlowerbound}). If $g_{3}(\pi/2)\geq \beta$, then $v_{2}>0$.
\end{itemize}
\end{lemma}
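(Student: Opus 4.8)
Following the remark opening this section, I would analyse $\gamma$ in Devaney's coordinates, where $\gamma$ corresponds to $\gamma''=T_{1}\gamma$, the branch of $W^{u}(L''_{-})$ with $w\ge 0$ leaving $L''_{-}=(0,-\sqrt{2V(\phi_{R})},\phi_{R},0)$; since $T_{1}$ preserves $v$, the numbers $v_{1},v_{2}$ are the values of $v$ when $\gamma''$ first reaches the $b$-collision shape $\phi=\phi_{b}=\pi/2$ and, after the ensuing bounce, the Euler shape $\phi=\phi_{m}=0$ (the latter being reached, given $v_{1}<0$, by the trapping argument recorded just before this lemma). On the collision manifold $\{r=0\}$ the energy relation $\frac{w^{2}}{2f}+\frac{fv^{2}}{2W}=1$ solves for $w$, and since $\phi_{b}-\phi_{a}=\pi$ one has $f(\phi)=\cos\phi$ and $W/f=V$; reparametrising the collision flow by $\phi$ reduces it to the scalar equation $\frac{dv}{d\phi}=\pm\sqrt{V(\phi)/2-v^{2}/4}$ with $v(\phi_{R})=-\sqrt{2V(\phi_{R})}$, the sign being that of $w$, while $v$ is monotone along the orbit since $\frac{dv}{ds}=(W-\tfrac12 fv^{2})/\sqrt{W}\ge 0$. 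Thus $\gamma''$ runs through $[\phi_{R},\pi/2]$ with $w>0$, reaches $\phi=\pi/2$ at $v=v_{1}$, bounces, and returns through $[0,\pi/2]$ with $w<0$, reaching $\phi=0$ at $v=v_{2}>v_{1}$. Writing $\rho:=v/\sqrt{2V}$, one has $\rho(\phi_{R})=-1$ and, while $w>0$, $\frac{d\rho}{d\phi}=\tfrac12\bigl(\sqrt{1-\rho^{2}}-\rho\tfrac{V'}{V}\bigr)$, with the identity $\tfrac{V'}{V}=\tfrac{W'}{W}+\tan\phi$ and the equivalence $v_{1}<0\iff\rho<0$ on $[\phi_{R},\pi/2)$.

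For (i): under (N1), $W'\le 0$, so $\rho\tfrac{W'}{W}\ge 0$ while $\rho<0$ and the $\rho$-equation is majorised by $\frac{d\bar\rho}{d\phi}=\tfrac12(\sqrt{1-\bar\rho^{2}}-\bar\rho\tan\phi)$, $\bar\rho(\phi_{R})=-1$. The crux is that $\bar\rho=-\cos\phi$ is an exact solution of this majorant; since $\bar\rho(\phi_{R})=-1<-\cos\phi_{R}$ and majorant solutions are unique away from $\bar\rho=\pm1$, a non-crossing argument gives $\rho(\phi)\le\bar\rho(\phi)<-\cos\phi<0$ on $[\phi_{R},\pi/2)$, hence $v<0$ there. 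Linearising the majorant about $-\cos\phi$ near $\phi=\pi/2$ (the gap $\delta$ solves $\delta'=\cot(2\phi)\delta$, so it behaves like $\sqrt{\sin 2\phi}$, i.e. like $\sqrt{\pi/2-\phi}$) shows $\rho$ is bounded above by $-c\sqrt{\pi/2-\phi}$ with $c>0$ near $\pi/2$, so $v=\sqrt{2V}\rho$ has a strictly negative limit and $v_{1}<0$.

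For (ii) I need an upper bound on $-v_{1}$, hence a lower bound on $\rho$. Since $V$ is increasing on $[\phi_{R},\pi/2)$ (by A.3), on $[\phi_{R},\pi/4]$ one has $\rho'\ge\tfrac12\sqrt{1-\rho^{2}}$, so $\rho(\pi/4)\ge-\cos\tfrac{\pi/4-\phi_{R}}{2}$ (this is where $\phi_{R}\le\pi/4$ enters). On $[\pi/4,\pi/2)$, (N3)'s bound $|W'/W|\le\tfrac45$ gives $\tfrac{V'}{V}\ge\tan\phi-\tfrac45$ and hence (using $\rho<0$) $\rho'\ge\tfrac12(\sqrt{1-\rho^{2}}-\rho\tan\phi)+\tfrac25\rho$; integrating this minorising equation from $\pi/4$ to $\pi/2$ and using $V(\phi)\sim(S_{n}/4)/\cos\phi$ as $\phi\to\pi/2$ extracts $v_{1}=\lim_{\phi\to\pi/2}\sqrt{2V}\rho\ge\beta\sqrt{W(\pi/2)}=\tfrac\beta2\sqrt{S_{n}}$ with $\beta=-1.32$. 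For (iii): given $\tfrac\beta2\sqrt{S_{n}}<v_{1}<0$, write the gain on the return leg as $v_{2}-v_{1}=\int_{0}^{\pi/2}\sqrt{V(\phi)/2-v(\phi)^{2}/4}\,d\phi$; if $v_{2}\le 0$ then $v\in[v_{1},0]$ there, so $v^{2}\le v_{1}^{2}<\tfrac{\beta^{2}}{4}S_{n}$, and with $V(\phi)\ge(S_{n}/4)/\cos\phi$ ($W$ decreasing by (N1)) one gets $v_{2}-v_{1}>\sqrt{S_{n}}\int_{0}^{\pi/2}\sqrt{\tfrac{1}{8\cos\phi}-\tfrac{\beta^{2}}{16}}\,d\phi>\tfrac{|\beta|}{2}\sqrt{S_{n}}$, the last step a direct quadrature estimate; hence $v_{2}>v_{1}+\tfrac{|\beta|}{2}\sqrt{S_{n}}>0$, contradicting $v_{2}\le 0$, so $v_{2}>0$. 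For (iv): if $|W'/W|\le\tfrac45$ fails, replace the minorising equation on $[\pi/4,\pi/2)$ by the comparison ODE \eqref{secondlowerbound}, whose solution $g_{3}$ is the corresponding (scaled) lower bound for $v$; if $g_{3}(\pi/2)\ge\beta$ then $v_{1}\ge\tfrac\beta2\sqrt{S_{n}}$, and (iii) applies verbatim.

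The main obstacle is not (i) --- that becomes soft once one spots the particular solution $-\cos\phi$ of the majorant --- but the quantitative steps (ii)--(iii). The scalar equation is singular both at $\phi=\phi_{R}$ (the square root vanishes, so $\gamma''$ must be launched along the unstable direction of $L''_{-}$) and at $\phi=\pi/2$ (where $\tan\phi$ and $V$ blow up, forcing $\rho\to 0$ regardless of the sign of $v_{1}$, which is therefore encoded in the \emph{rate} of that decay, i.e. in $\lim_{\phi\to\pi/2}\rho(\phi)/\sqrt{\pi/2-\phi}$). The constants $\phi_{R}\le\pi/4$, $\alpha=\tfrac45$, $\beta=-1.32$ are calibrated so that the two one-sided ODE comparisons close with only a narrow margin, so the minorant integration in (ii) and the quadrature $\int_{0}^{\pi/2}\sqrt{\tfrac{1}{8\cos\phi}-\tfrac{\beta^{2}}{16}}\,d\phi>\tfrac{|\beta|}{2}$ in (iii) have to be carried out carefully (in part numerically), and (iv) together with \eqref{secondlowerbound} is exactly the fall-back when (N3) itself is too crude.
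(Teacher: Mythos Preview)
Your approach is correct and is essentially the paper's, expressed in a different normalisation: the paper introduces $g=v/\sqrt{W(\phi)}$ rather than your $\rho=v/\sqrt{2V(\phi)}$ (so $\rho=g\sqrt{\cos\phi/2}$), and your exact majorant solution $\bar\rho=-\cos\phi$ for (i) is precisely the paper's explicit solution $g_1=-\sqrt{2\cos\phi}$. The minorant you set up for (ii), the integral contradiction you run for (iii), and the fallback (iv) all translate line for line into the paper's argument in the $g$ variable; in particular, the worst case $\rho(\pi/4)\ge-1$ of your bound is exactly the paper's initial condition $g_2(\pi/4)=-\sqrt{2\sqrt2}$, and your quadrature inequality $\int_0^{\pi/2}\sqrt{\tfrac{1}{8\cos\phi}-\tfrac{\beta^2}{16}}\,d\phi>\tfrac{|\beta|}{2}$ is the paper's $\int_0^{\pi/2}\sqrt{\tfrac{1}{2\cos\phi}-\tfrac{\beta^2}{4}}\,d\phi>|\beta|$ divided by $2$.

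The one place the paper's choice buys something is at the endpoint $\phi=\pi/2$: since $W(\pi/2)=S_n/4$ is finite while $V(\pi/2)=\infty$, the paper's $g$ has a finite nonzero limit there and $v_1=g(\pi/2)\sqrt{S_n}/2$ directly, so $g(\pi/2)<g_1(\pi/2)=0$ gives $v_1<0$ without further work. Your $\rho\to 0$ at $\pi/2$ regardless of the sign of $v_1$, which is exactly why you are forced into the asymptotic extraction (the linearised gap $\delta'= \cot(2\phi)\,\delta$, $\delta\sim\sqrt{\sin2\phi}$) in (i) and (ii). That extra step is correct but avoidable; switching to $g$ removes it and otherwise leaves your argument unchanged.
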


\begin{proof}
Following~\cite{martinez2012}, we introduce a new variable $g=\frac{v}{\sqrt{W(\phi)}}$. When restricted to the collision manifold, the differential equations become
\begin{equation}\label{equation_g}
\begin{aligned}
\dot{g}&=1-\frac{g^2}{2}\cos\phi-\frac{gw}{2}\frac{W'(\phi)}{W(\phi)}\\
\dot{\phi}&=w\\
\dot{w}&=-\frac{gw}{2}\cos\phi-\sin{\phi}(1-\cos\phi g^2)+\frac{W'(\phi)}{W(\phi)}(\cos\phi-\frac{w^2}{2}),
\end{aligned}
\end{equation}
and the collision manifold becomes
\begin{equation}\label{collision_g}
\frac{w^2}{2\cos\phi}-1=-\frac{1}{2}\cos\phi g^2.
\end{equation}
Note that the equation of the collision manifold~(\ref{collision_g}) is independent of $W(\phi)$, and the differential equation~(\ref{equation_g}) is not necessary gradient-like with respect to $g$. 

Using the equation for collision manifold, one finds that 
$\dot{g}=\frac{w^2}{2\cos\phi}-\frac{gw}{2}\frac{W'(\phi)}{W(\phi)}$, and hence
\begin{equation*}
\begin{aligned}
\frac{dg}{d\phi}&=\frac{w}{2\cos\phi}-\frac{g}{2}\frac{W'(\phi)}{W(\phi)} \\
                &=\pm\sqrt{\frac{1}{2\cos\phi}-\frac{g^2}{4}}-\frac{g}{2}\frac{W'(\phi)}{W(\phi)}, 
\end{aligned}
\end{equation*}                  
where we take the $+$ sign when $w\geq 0$ and the $-$ sign when $w<0$. 

Before reaching $\phi=\pi/2$, the unstable branch $\gamma''$ stays in $\{w\geq 0\}$, and it satisfies the differential equation
\begin{equation}\label{ODEforGamma}
\begin{aligned}
\frac{dg}{d\phi}&=\sqrt{\frac{1}{2\cos\phi}-\frac{g^2}{4}}-\frac{g}{2}\frac{W'(\phi)}{W(\phi)} \\
\lim_{\phi\rightarrow\phi_{R}}g(\phi)&=-\sqrt{2\sec\phi_{R}}.
\end{aligned}
\end{equation}

\begin{figure}[ht]
 \centering
\includegraphics[width=0.5\textwidth]{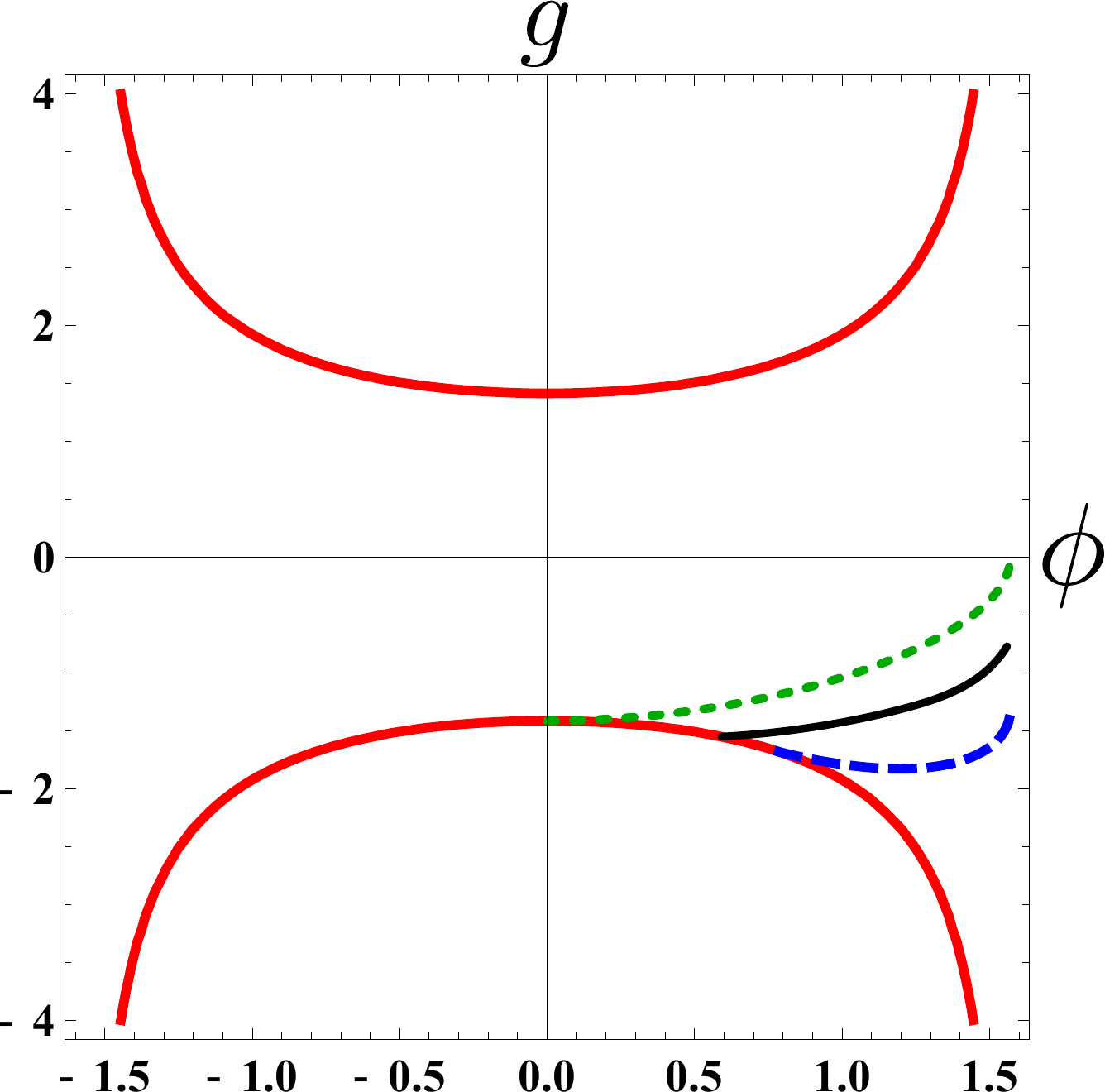}
\caption{The black solid curve is $\gamma''$. The green dotted curve and the blue dashed curve are the solutions to~\ref{solvableODE} and~\ref{nonautoODE} respectively.}
\label{compare_branches}
 \end{figure}

We study $\gamma''$ by comparing it with the solutions to the following two differential equations, see figure~\ref{compare_branches}.
\begin{equation}\label{solvableODE}
\begin{aligned}
\frac{dg_{1}}{d\phi}&=\sqrt{\frac{1}{2\cos\phi}-\frac{g_{1}^2}{4}} \\
g_{1}(0)&=-\sqrt{2}.
\end{aligned}
\end{equation} 

\begin{equation}\label{nonautoODE}
\begin{aligned}
\frac{dg_{2}}{d\phi}&=\sqrt{\frac{1}{2\cos\phi}-\frac{g_{2}^2}{4}}+\alpha\frac{g_{2}}{2}, \ \ \mbox{where} \ \ \alpha=\frac{4}{5} \\
g_{2}(\frac{\pi}{4})&=-\sqrt{2\sqrt{2}}.
\end{aligned}
\end{equation} 

For equation~(\ref{solvableODE}), one can find the solution explicitly; the solution is $g_{1}(\phi)=-\sqrt{2\cos\phi}$, and hence $g_{1}(\pi/2)=0$. The branch $\gamma''$ cannot cross the solution to~(\ref{solvableODE}) before it reaches $\phi=\pi/2$, since the slope $\frac{dg}{d\phi}$ of~(\ref{ODEforGamma}) is strictly less than that of ~(\ref{solvableODE}) when $g<0$ and $\frac{W'(\phi)}{W(\phi)}<0$. Therefore, $v_{1}<g_{1}(\pi/2)=0$. This proves (i).

Now we prove (ii). For equation~(\ref{nonautoODE}), after a numerical integration,  we find that $g_{2}(\pi/2):=\beta_{1}\approx -1.315705>\beta$. Therefore, provided that $|\frac{W'(\phi)}{W(\phi)}|\leq \alpha:=\frac{4}{5}$ for $\phi\in [\pi/4,\pi/2)$ and that $\frac{W'(\phi)}{W(\phi)}\leq 0$ for $\phi\in [0,\pi/2)$, the curve $\gamma''$ stays above the solution of ~(\ref{nonautoODE}) at least until it reaches $\phi=\pi/2$, since 
\begin{equation*}
\alpha\frac{g}{2}\leq -\frac{g}{2}\frac{W'(\phi)}{W(\phi)} \leq 0.
\end{equation*}
Therefore, the intersection of $\gamma''$ and $\phi=\pi/2$ has $\beta<g(\pi/2)<0$. Recovering the $v$ variable by $v=g\sqrt{W(\phi)}$, we have proved that $\frac{\beta}{2}\sqrt{S_{n}}<v_{1}<0$.

Next, we prove (iii). We follow $\gamma''$ from $\phi=\pi/2$ to $\phi=0$. Note that along this segment of $\gamma''$, we have $w\leq 0$. For the sake of contradiction, assume $v\leq 0$, which is equivalent to that $g\leq 0$. Then $\frac{dg}{d\phi}=-\sqrt{\frac{1}{2\cos\phi}-\frac{g^2}{4}}-\frac{g}{2}\frac{W'(\phi)}{W(\phi)}$ is negative, hence $0\geq g(\phi)=\beta_{1}>\beta=-1.32 $ along the part of $\gamma''$ where $\phi$ goes from $\phi=\pi/2$ to $\phi=0$.

We then have 
\begin{equation}\label{integral}
\begin{aligned}
g(0)&=g(\frac{\pi}{2})+\int_{\pi/2}^{0}\frac{dg}{d\phi}d\phi \\
    &=g(\frac{\pi}{2})+\int_{0}^{\frac{\pi}{2}}\sqrt{\frac{1}{2\cos\phi}-\frac{g^2}{4}}+\frac{g}{2}\frac{W'(\phi)}{W(\phi)}d\phi \\
    &\geq g(\frac{\pi}{2})+\int_{0}^{\frac{\pi}{2}}\sqrt{\frac{1}{2\cos\phi}-\frac{g^2}{4}}d\phi \\
    &\geq \beta+  \int_{0}^{\frac{\pi}{2}}\sqrt{\frac{1}{2\cos\phi}-\frac{\beta^2}{4}}d\phi \\
    &\approx -1.32+ 1.379875 >0.
\end{aligned}
\end{equation} 
This contradicts to our assumption that $g\leq 0$. So this implies that $\gamma''$ intersects $g=0$, i.e., $v=0$, before it reaches $\phi=0$. Therefore, $v_{2}>0$.

Finally, if the condition in (ii) does not hold, we compare $\gamma''$ with the solution of the initial value problem,
\begin{equation}\label{secondlowerbound}
\begin{aligned}
\frac{dg_{3}}{d\phi}&=\sqrt{\frac{1}{2\cos\phi}-\frac{g_{3}^2}{4}}-\frac{g_{3}}{2}\frac{W'(\phi)}{W(\phi)} \\
g_{3}(\pi/4)&=-\sqrt{2\sqrt{2}}.
\end{aligned}
\end{equation} 

In the interval when $\phi$ increases from $\pi/4$ to $\pi/2$, the solution of ~(\ref{secondlowerbound}) stays below $\gamma''$, since they satisfy the same differential equation and $g(\pi/4)\geq g_{3}(\pi/4)$. Therefore, if $g_{3}(\pi/2)\geq\beta$, then by using the same argument for (iii), one proves that $v_{2}>0$.

\end{proof}

\textbf{Remark on improving the estimation.} In the proof, the constant $\alpha=4/5$ is an upper bound for $|\frac{W'(\phi)}{W(\phi)}|$. One may choose a smaller upper bound, which will give a larger value of $\beta_{1}$, and one still obtains contradiction from equation (\ref{integral}) and therefore concludes $v_{2}>0$. On the other hand, if one choose a larger upper bound, for example $\alpha=1$, then one may not obtain contradiction from equation (\ref{integral}).

\begin{lemma}\label{lemma_v3}
\textbf{(N2)} If $V(\phi_{R})>(\sin^{2}\frac{\phi_{R}-\phi_{m}}{2})V(\phi_{m})$, then $v_{3}<0$. 
\end{lemma}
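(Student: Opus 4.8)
The plan is to work on the triple collision manifold in Devaney's coordinates, where $\gamma'$ is the branch of $W^{u}(L'_{-})$ issuing from $L'_{-}=(0,-\sqrt{2V(\phi_{L})},\phi_{L},0)$ with $w\ge 0$; since $\dot\phi=w$, this branch has $\phi$ increasing, so it is natural to follow it as $\phi$ runs from $\phi_{L}$ towards $\phi_{m}$ and to control the $v$-coordinate $v_{3}$ of its intersection with $\{\phi=\phi_{m}\}$ (which is the line $\theta=0$ in the new coordinates, and $\gamma'$ here corresponds to $\gamma'$ there).

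First I would reparametrize the collision-manifold flow by $\phi$. On $\{r=0\}$ the system~(\ref{diffeqn_devaney}) gives $\dot v=\frac{\sqrt{f(\phi)}}{2\sqrt{V(\phi)}}\bigl(2V(\phi)-v^{2}\bigr)$ and $\dot\phi=w$, while the collision relation reads $w^{2}=\frac{f(\phi)}{V(\phi)}\bigl(2V(\phi)-v^{2}\bigr)$. Dividing, all dependence on $f$ and $W$ cancels and one is left with the single scalar equation
\[
\frac{dv}{d\phi}=\tfrac12\sqrt{2V(\phi)-v^{2}},\qquad v(\phi_{L})=-\sqrt{2V(\phi_{L})}.
\]
Because the right-hand side is nonnegative, $v$ is nondecreasing, so $v(\phi)\ge -\sqrt{2V(\phi_{L})}>-\sqrt{2V(\phi_{m})}$; and by A.2, A.3 and the non-degeneracy of the critical points, $V$ is strictly increasing on $(\phi_{L},\phi_{m})$ with $V\le V(\phi_{m})$ and $V(\phi_{L})=V(\phi_{R})<V(\phi_{m})$. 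Hence, as long as $v<0$ one has $v^{2}<2V(\phi)$, so $w>0$, $\phi$ keeps increasing, and $\gamma'$ genuinely reaches $\phi=\phi_{m}$; thus $v_{3}$ is well defined and it suffices to prove $v_{3}<0$.

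Next I would compare $v$ with the solution $V_{1}$ of the autonomized problem $\frac{dV_{1}}{d\phi}=\tfrac12\sqrt{2V(\phi_{m})-V_{1}^{2}}$, $V_{1}(\phi_{L})=-\sqrt{2V(\phi_{L})}$. Since $V(\phi)\le V(\phi_{m})$, the function $v$ is a subsolution of this equation, and on the range where $v,V_{1}\in[-\sqrt{2V(\phi_{L})},0]$ its right-hand side is locally Lipschitz in the dependent variable (bounded away from the bad set $V_{1}^{2}=2V(\phi_{m})$), so the standard differential-inequality comparison yields $v(\phi)\le V_{1}(\phi)$ on $[\phi_{L},\phi_{m}]$. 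The comparison problem is solved explicitly by $V_{1}(\phi)=\sqrt{2V(\phi_{m})}\,\sin\psi$ with $\psi=\tfrac{\phi-\phi_{L}}{2}+\psi_{0}$ and $\psi_{0}=\arcsin\!\bigl(-\sqrt{V(\phi_{R})/V(\phi_{m})}\bigr)\in(-\tfrac\pi2,0)$, using $V(\phi_{L})=V(\phi_{R})$. Evaluating at $\phi=\phi_{m}$ and using the symmetry $\phi_{m}-\phi_{L}=\phi_{R}-\phi_{m}$ gives
\[
V_{1}(\phi_{m})=\sqrt{2V(\phi_{m})}\,\sin\!\Bigl(\tfrac{\phi_{R}-\phi_{m}}{2}+\psi_{0}\Bigr),
\]
whose argument lies in $(-\tfrac\pi2,\tfrac\pi2)$, so $V_{1}(\phi_{m})<0$ exactly when $\tfrac{\phi_{R}-\phi_{m}}{2}<\arcsin\sqrt{V(\phi_{R})/V(\phi_{m})}$, i.e.\ when $\sin^{2}\tfrac{\phi_{R}-\phi_{m}}{2}<V(\phi_{R})/V(\phi_{m})$ — which is precisely hypothesis (N2). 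Then $v_{3}=v(\phi_{m})\le V_{1}(\phi_{m})<0$, as claimed.

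As for the main obstacle: the point is to get the sharp constant. The crude estimate $\frac{dv}{d\phi}\le\tfrac12\sqrt{2V(\phi_{m})}$ would only give the stronger hypothesis $V(\phi_{R})>\tfrac{(\phi_{R}-\phi_{m})^{2}}{4}V(\phi_{m})$, so one must retain the $-v^{2}$ term and solve the comparison ODE exactly to produce the $\sin^{2}$ in (N2). The one genuinely delicate point in making this rigorous is the comparison at the degenerate endpoint $\phi=\phi_{L}$, where $w=0$ and the $\phi$-reparametrization breaks down; I would handle it by running the comparison on $[\phi_{L}+\epsilon,\phi_{m}]$ and letting $\epsilon\to 0$, using that along $\gamma'$ one has $\frac{dv}{d\phi}\to 0$ and $v\to-\sqrt{2V(\phi_{L})}$ as $\phi\to\phi_{L}^{+}$ whereas $V_{1}'(\phi_{L})=\tfrac12\sqrt{2V(\phi_{m})-2V(\phi_{L})}>0$, so $v(\phi_{L}+\epsilon)<V_{1}(\phi_{L}+\epsilon)$ for small $\epsilon$; equivalently, for $\phi>\phi_{L}$ the branch $\gamma'$ is an honest solution of $\frac{dv}{d\phi}=\tfrac12\sqrt{2V-v^{2}}$ issuing from $(\phi_{L},-\sqrt{2V(\phi_{L})})$, at which the comparison field is smooth. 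The remaining computations are routine.
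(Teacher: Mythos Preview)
Your proof is correct and is essentially the same as the paper's. Both reduce, on the collision manifold in Devaney's coordinates, to the scalar equation $\frac{dv}{d\phi}=\tfrac12\sqrt{2V(\phi)-v^{2}}$ along $\gamma'$ and then exploit $V(\phi)\le V(\phi_{m})$ on $(\phi_{L},\phi_{m})$; the paper argues by contradiction, inverting to $\frac{d\phi}{dv}=\frac{2}{\sqrt{2V(\phi)-v^{2}}}\ge\frac{2}{\sqrt{2V(\phi_{m})-v^{2}}}$ and integrating to get $\phi_{m}-\phi_{L}\ge 2\arcsin\sqrt{V(\phi_{R})/V(\phi_{m})}$, while you phrase the same estimate as a forward ODE comparison with the explicit solution $V_{1}(\phi)=\sqrt{2V(\phi_{m})}\sin\bigl(\tfrac{\phi-\phi_{L}}{2}+\psi_{0}\bigr)$ — the two are trivially equivalent and yield exactly the same threshold (N2).
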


\begin{proof}

Along the branch $\gamma''$, from equation~(\ref{diffeqn_devaney}), we have 
\begin{equation}
\frac{dv}{d\phi}=\frac{1}{2}\sqrt{2V(\phi)-v^2}.
\end{equation}
If $v_{3}\geq 0$, this implies that $\gamma''$ reaches $v=0$ before it reaches $\phi=\phi_{m}$, which implies that along $\gamma''$, when the variable $v$ varies from $-v_{L}=-\sqrt{2V(\phi_{L})}=-v_{R}$ to $0$, the total variation of the variable $\phi$ is less then $\phi_{m}-\phi_{L}$, so
\begin{equation*}
\begin{aligned}
\phi_{R}-\phi_{m}=\phi_{m}-\phi_{L}&\geq\bigtriangleup\phi=\int_{-v_{L}}^{0}\frac{2}{\sqrt{2V(\phi)-v^2}}dv\\
&\geq \int_{-v_{L}}^{0}\frac{2}{\sqrt{2V(\phi_{m})-v^2}}dv\\
&=2\arcsin(\frac{v_{L}}{\sqrt{2V(\phi_{m})}})=2\arcsin(\sqrt{\frac{2V(\phi_{R})}{2V(\phi_{m})}}),
\end{aligned}
\end{equation*}
which makes a contradiction to the assumption $V(\phi_{R})-\sin^{2}\frac{\phi_{R}-\phi_{m}}{2}V(\phi_{m})>0$.

\end{proof}

\section{Three Applications}

\subsection{The $n$-pyramidal problem.}

The n-pyramidal problem consists of $n$ equal masses $m_{1}=m_{2}=\cdots=m_{n}=1$ along with an additional mass $m_{n+1}=\mu$. The $n$ equal masses always lie in some horizontal plane $z=z_{1}$ and equally spaced in a circle centered at the origin with radius $q_{1}$, forming a regular $n-$polygon, while $m_{n}$ moves up and down on the $z-$axis. We denote the signed distance between $m_{n+1}$ to the plane $z=z_{1}$ by $q_{2}$. See table~\ref{summary} for the configuration. Note that the planar isosceles problem is the special case of the $n-$pyramidal problem when $n=2$.

Following Mart\'{i}nez~\cite{martinez2012}, the Lagrangian is given by
\begin{equation}
L(q_{1},q_{2},\dot{q_{1}},\dot{q_{2}})= \sum_{k=1}^{n-1}\frac{1}{4q_{1}\sin l_{k}}+\frac{\mu}{\sqrt{q_{1}^2+q_{2}^2}}+\frac{1}{2}(\dot{q}_{1}^2+\frac{\mu}{n+\mu}\dot{q}_{2}^2),
\end{equation}
where $l_{k}=\pi k/n$, and $q_{1}\geq 0, q_{2}\in\mathbf{R}$. In Devaney's coordinates, the variables $r,\phi$ are defined by 
\begin{equation*}
r^{2}=q_{1}^2+\frac{\mu}{n+\mu}q_{2}^2, \ \ \ 
q_{1}=r\cos\phi,\ \  q_{2}=r\sqrt{\frac{n+\mu}{\mu}}\sin\phi,  \ \ r\geq 0, \ \ \ \phi\in (-\frac{\pi}{2},\frac{\pi}{2}),
\end{equation*}
and as a consequence,
\[V(\phi)=\frac{S_{n}}{4\cos\phi}+\frac{\mu}{\sqrt{1+(n/\mu)\sin^{2}\phi}}, \ \ 
\mbox{where} \ \  S_{n}=\sum_{k=1}^{n-1}\csc{l_{k}}, l_{k}=\pi k/n\]

Mart\'{i}nez has located the critical points of $V(\phi)$ in the following lemma.
\begin{lemma}~\cite{martinez2012}
\begin{enumerate}
\item If $2\leq n<473$, then $V(\phi)$ has three non-degenerate critical points: a maximum at $\phi=0$ and two minima at $\pm\phi_{R}$, where 
\[\tan^{2}\phi_{R}=\frac{\mu}{n+\mu}((\frac{4n}{S_{n}})^{2/3}-1).\]
\item If $n\geq 473$, then $V(\phi)$ has a unique non-degenerate critical point at $\phi=0$.
\end{enumerate}
\end{lemma}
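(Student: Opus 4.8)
The plan is to analyze the function
\[V(\phi)=\frac{S_{n}}{4\cos\phi}+\frac{\mu}{\sqrt{1+(n/\mu)\sin^{2}\phi}}\]
on the open interval $(-\pi/2,\pi/2)$ by studying its first derivative. Since $V$ is manifestly even in $\phi$ (both $\cos\phi$ and $\sin^{2}\phi$ are even), it suffices to work on $[0,\pi/2)$, and $\phi=0$ is automatically a critical point; I will show $V''(0)<0$ so that $\phi=0$ is a local maximum, and then count the remaining sign changes of $V'$ on $(0,\pi/2)$.

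First I would compute $V'(\phi)$. Differentiating term by term gives a sum of $\sin\phi$ times two positive factors: schematically
\[V'(\phi)=\sin\phi\left(\frac{S_{n}}{4\cos^{2}\phi}-\frac{\mu\,(n/\mu)\cos\phi}{\bigl(1+(n/\mu)\sin^{2}\phi\bigr)^{3/2}}\right)
      =\sin\phi\cdot\psi(\phi),\]
so on $(0,\pi/2)$ the critical points of $V$ other than $0$ are exactly the zeros of $\psi(\phi)$. The next step is to show $\psi$ has exactly one zero on $(0,\pi/2)$ when $n<473$ and none when $n\ge 473$. Clearing denominators, $\psi(\phi)=0$ is equivalent to
\[\frac{S_{n}}{4}\bigl(1+(n/\mu)\sin^{2}\phi\bigr)^{3/2}=n\cos^{3}\phi,\]
i.e. $\tfrac{S_n}{4}(1+(n/\mu)\sin^2\phi)^{3/2}=n(1-\sin^2\phi)^{3/2}$. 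Writing $u=\sin^{2}\phi\in[0,1)$ and taking $2/3$ powers, this becomes the \emph{linear} equation
\[\Bigl(\tfrac{S_n}{4n}\Bigr)^{2/3}\bigl(1+(n/\mu)u\bigr)=1-u,\]
which has a unique solution $u=u_{\ast}$, and $u_{\ast}$ lies in $(0,1)$ precisely when $(S_n/4n)^{2/3}<1$, i.e. when $S_{n}<4n$; solving the linear relation for $u_{\ast}$ and back-substituting $u_{\ast}=\tan^{2}\phi_{R}/\sec^2\phi_R$... more cleanly, $u_\ast=\sin^2\phi_R$ with $1-u_\ast=\cos^2\phi_R$ yields exactly $\tan^{2}\phi_{R}=\frac{\mu}{n+\mu}\bigl((4n/S_{n})^{2/3}-1\bigr)$ as claimed. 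The arithmetic fact that $S_{n}<4n$ holds iff $n<473$ (equivalently $\sum_{k=1}^{n-1}\csc(\pi k/n)<4n$, which fails first at $n=473$ because $S_n\sim \tfrac{2n}{\pi}\log n$ eventually dominates $4n$) is the step pinning down the threshold; I would cite it to \cite{martinez2012}.

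It remains to check non-degeneracy and the nature of the critical points. For $\phi=0$: $V'(\phi)=\sin\phi\,\psi(\phi)$ with $\psi(0)=\tfrac{S_n}{4}-n<0$ exactly when $n<473$, so near $0$ we have $V'(\phi)=\phi\cdot\psi(0)+O(\phi^3)$ with negative leading coefficient, giving a strict local max and $V''(0)=\psi(0)\ne 0$; when $n\ge 473$, $\psi(0)=\tfrac{S_n}{4}-n\ge 0$ — one must handle the boundary case $n=473$ separately if $S_{473}=4\cdot 473$ exactly, but generically $\psi(0)>0$. For $\phi=\pm\phi_{R}$: since $\psi$ changes sign from $-$ to $+$ at $\phi_{R}$ (its numerator $n\cos^3\phi$ is decreasing and the other term increasing in the relevant range, so $\psi$ is increasing through its unique zero), $V'=\sin\phi\cdot\psi$ changes from $-$ to $+$, so $\phi_{R}$ is a strict local minimum and $V''(\phi_{R})=\sin\phi_{R}\,\psi'(\phi_{R})>0$, hence non-degenerate. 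The main obstacle is purely the monotonicity claim that $\psi$ has a single sign change — this is what makes ``exactly three critical points'' true rather than merely ``at least three''; the cleanest route, as above, is the substitution $u=\sin^2\phi$ followed by taking $2/3$ powers, which turns the transcendental equation into a linear one and makes uniqueness immediate, after which everything else is bookkeeping.
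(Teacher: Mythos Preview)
The paper does not supply its own proof of this lemma; it is simply quoted from \cite{martinez2012}. Your argument is correct and essentially complete. The substitution $u=\sin^{2}\phi$ followed by taking $2/3$ powers is the right move: it reduces $\psi(\phi)=0$ to a \emph{linear} equation in $u$, which in one stroke gives both uniqueness of the nontrivial root and the closed-form expression for $\tan^{2}\phi_{R}$. Your monotonicity observation---the first term of $\psi$ strictly increasing and the second strictly decreasing on $(0,\pi/2)$---then cleanly delivers $\psi'(\phi_{R})>0$ and hence non-degeneracy at $\pm\phi_{R}$, while $V''(0)=\psi(0)=S_{n}/4-n$ handles the center.

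The one step you (appropriately) defer to the reference is the purely arithmetic fact that $S_{n}<4n$ holds exactly for $2\le n<473$; since $S_{n}=\sum_{k=1}^{n-1}\csc(\pi k/n)$ this is a numerical check, and the asymptotic expansion of $S_{n}$ recorded in the paper's Appendix confirms the crossover near $n=473$. The borderline equality $S_{n}=4n$ you flag does not occur at $n=473$, so the degenerate edge case does not actually arise.
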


In~\cite{martinez2012}, the conditions (M1,M2,M3) have been successfully verified, and therefore the existence of $\mathcal{B}-$family periodic orbits with $k=0$ is proved. We remark that the conditions (M1,M2) imply our conditions (N1,N2), which also ensure the existence of $\mathcal{B}-$family periodic orbits with $k=0$.

To prove the existence of other families of periodic orbits, we now are left to verify the condition (N3), which ensures $v_{2}>0$. However, in the case $n=2$, that is, in the isosceles problem, the behavior of $\gamma$ and $\gamma'$ with respect to the mass ratio has be carefully analyzed~\cite{simo1981,simo1987,chen2013}. It is shown that $v_{2}>0$ if and only if the mass ratio $\mu$ satisfies $0<\mu<\epsilon_{2}\approx 2.661993$. In other words, the condition (N3) does not hold for general $\mu$. \textbf{From now on, we restrict our study to the case $\mu=1$.}   We will verify the condition (N3) for $n\geq 4$.

We first show that $\phi_{R}\in (0,\pi/4]$. From the previous lemma, 
\begin{equation*}
\begin{aligned}
\tan^{2}\phi_{R}&=\frac{1}{n+1}((\frac{4n}{S_{n}})^{2/3}-1)\leq \frac{1}{n+1}((\frac{4n}{n-1})^{2/3}-1)\leq \frac{1}{n+1}(8^{2/3}-1)\leq 1.
\end{aligned}
\end{equation*}
Therefore, $\phi_{R}\in (0,\pi/4]$ for $n\geq 2$.

Second, we study the function $\frac{W'(\phi)}{W(\phi)}$. See figure~\ref{RatioPyramidal}(a) for its graph. We have $W(\phi)=\frac{S_{n}}{4}+\frac{\cos\phi}{\sqrt{1+n\sin^2\phi}}>0$. By the monotonicity of $\sin\theta$ and $\cos\theta$, it is apparently that $W'(\phi)\leq 0$ and $\frac{W'(\phi)}{W(\phi)}\leq 0$ in $[0,\pi/2)$.

Third, we show that for $n\geq 4$, $|\frac{W'(\phi)}{W(\phi)}|\leq\frac{4}{5}$ for $\phi\in [\pi/4,\pi/2)$. We compute $W'(\phi)= -\frac{(n+1)\sin\phi}{(1+n\sin^{2}\phi)^{3/2}}$ and $W''(\phi)= -\frac{(n+1)\cos\phi}{(1+n\sin^{2}\phi)^{5/2}}(1-2n\sin^{2}\phi)$, so in the interval $(0,\pi/2)$, the critical point of $W'(\phi)$ is at $\phi_{\ast}=\arcsin(\sqrt{\frac{1}{2n}})<\frac{\pi}{4}$ and $W'(\phi)\leq 0$ is increasing in $(\phi_{\ast},\pi/2)$. Therefore, $|W'(\phi)|\leq W'(\pi/4)$ in $[\pi/4,\pi/2)$. 

On the other hand, clearly $W(\phi)\geq\frac{S_{n}}{4}$. Therefore, 
\begin{equation*}
\begin{aligned}
|\frac{W'(\phi)}{W(\phi)}|&\leq |W'(\frac{\pi}{4})|\frac{4}{S_{n}}=\frac{2(n+1)}{(2+n)^{3/2}}\frac{4}{S_{n}}<\frac{4}{5} \ \ \ \ \forall\phi\in [\frac{\pi}{4},\frac{\pi}{2}), n\geq 4.
\end{aligned}
\end{equation*}
This verifies the condition (N3) for $n\geq 4$. For $n=2,3$, the condition (N3) does not hold, so we verify the condition (N3') in Lemma~\ref{lemma_v1}(iv) instead. Let $g_{3}(\phi)$ be the solution to~(\ref{secondlowerbound}). After a numerical integration with Mathematica, we found that $g_{3}(\pi/2)\approx$ $-1.2328676, -0.9930229$ $\geq \beta$ for $n=2,3$ respectively. This implies that $v_{2}>0$.

As a remark, the condition (N2) can be easily verified as follows. We have $V(0)=\frac{S_{n}}{4}+1<\frac{S_{n}}{4}+S_{n}=\frac{5}{4}S_{n}$ and 
$V(\phi_{R})\geq \frac{S_{n}}{4}$. Therefore, $V(\phi_{R})\geq V(0)/5\geq \sin^{2}(\pi/8)V(0)$.

\begin{figure}[ht]
\centering
\subfloat[The $n-$pyramidal problem]
{\includegraphics[width=0.5\textwidth]{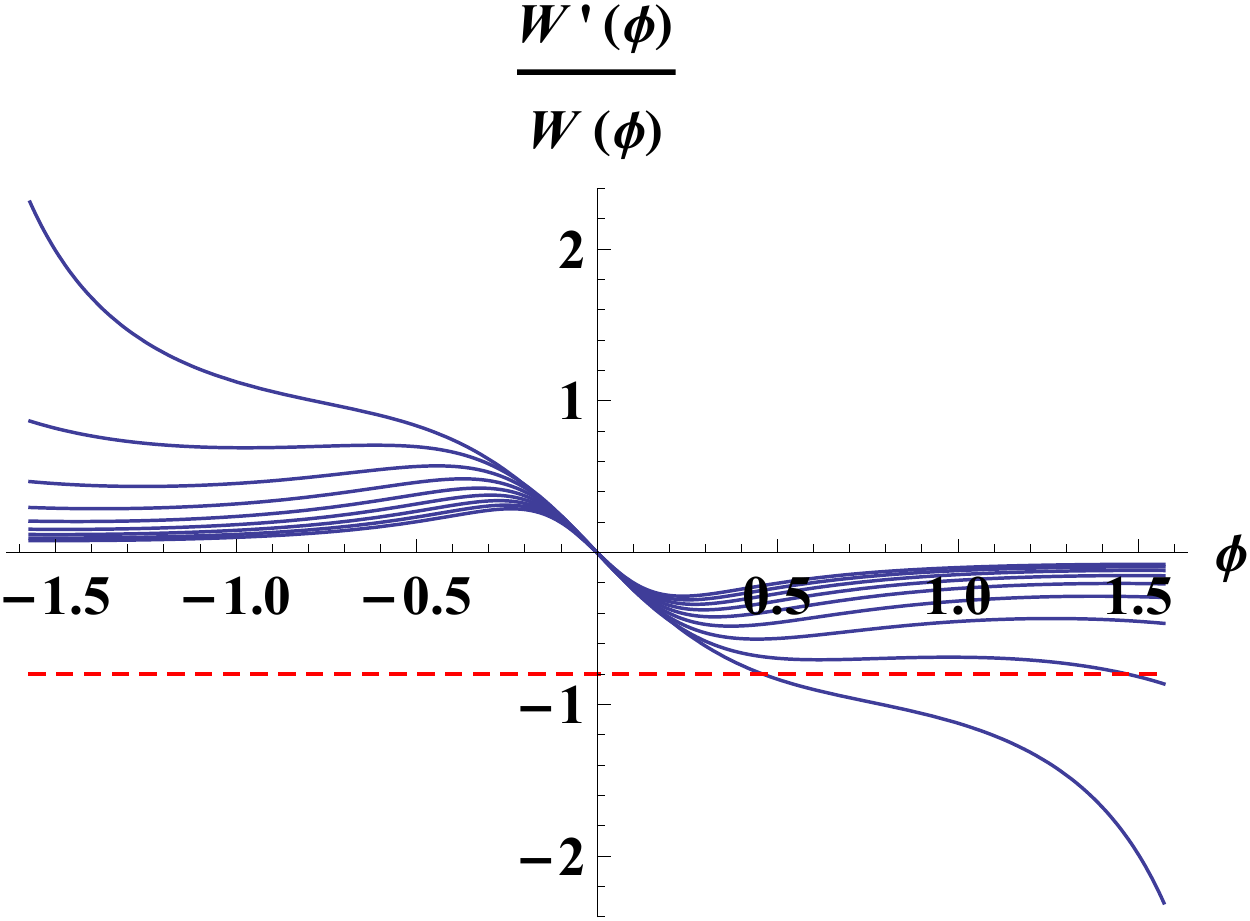}}\hfill
\subfloat[The spatial double-polygon problem]{\includegraphics[width=0.5\textwidth]{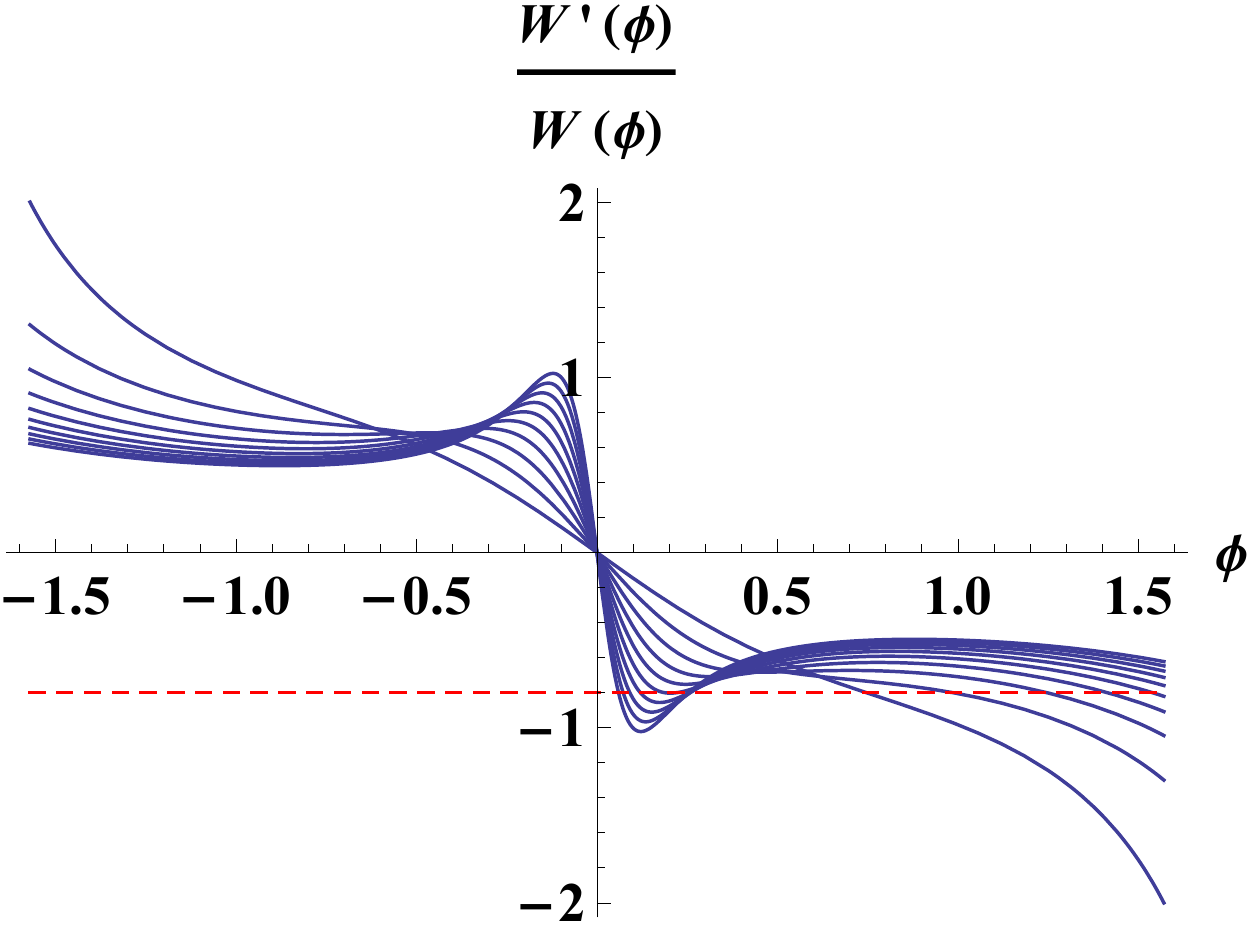}}
\caption{Graph of $\frac{W'(\phi)}{W(\phi)}$ for the equal-mass $n-$pyramidal problem and the spatial double-polygon problem with $2\leq n\leq 10$. As $n$ increases, $\frac{W'(\pi/2)}{W(\pi/2)}$ increases. The horizontal dash line is $\frac{W'(\phi)}{W(\phi)}=-\frac{4}{5}$}
\label{RatioPyramidal}
\end{figure}

Finally, we formally state the conclusion.
\begin{theorem}
In the planar isosceles three-body problem, let $m_{1}=m_{2}=1$. For any $m_{3}$ in an open interval including $m_{3}=1$, in addition to the six types of periodic brake orbits (including $\mathcal{Z}1-$famly and $\mathcal{Z}5-$family) proved in~\cite{chen2013}, there exist $\mathcal{B}-$family, less-symmetric $\mathcal{B}-$family, and $\mathcal{ZB}-$family periodic orbits.

For any $2\leq n<473$ with any positive mass $\mu$ in the $n-$pyramidal problem, there exists a  Schubart-like orbit in the $\mathcal{B}-$family with $n=0$.

For any $2\leq n<473$, in the equal-mass $n-$pyramidal problem, there exist $\mathcal{B}-$family and $\mathcal{Z}1-$family periodic orbits. If moreover, the hypothesis $v_{2}\neq -v_{3}$ is true, then there exist $\mathcal{Z}5-$family, less-symmetric $\mathcal{B}-$family, and $\mathcal{ZB}-$family periodic orbits.
\end{theorem}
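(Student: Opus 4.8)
The plan is to settle each of the three assertions by checking, case by case, exactly the hypotheses among (N1)--(N4) that the relevant existence theorems of Section~3 require, and then invoking those theorems through Lemmas~\ref{lemma_v1} and~\ref{lemma_v3}. The bookkeeping is: Theorem~\ref{thm_schubart} ($\mathcal{B}$-family with $k=0$) needs only $v_1<0$ and $v_3<0$; Theorem~\ref{thm_schubart2} ($\mathcal{B}$-family, all $k$) and Theorem~\ref{thm_type1}(i) ($\mathcal{Z}1$-family) need $v_1<0$, $v_2>0$, $v_3<0$; and Theorems~\ref{thm_schubart3} and~\ref{thm_schubart4} (less-symmetric $\mathcal{B}$- and $\mathcal{ZB}$-families) and Theorem~\ref{thm_type1}(ii) ($\mathcal{Z}5$-family) need, on top of these, $v_2\neq-v_3$, which is condition (N4); meanwhile Lemma~\ref{lemma_v1}(i) converts (N1) into $v_1<0$, Lemma~\ref{lemma_v1}(ii)--(iv) converts (N3) or (N3') into $v_2>0$, and Lemma~\ref{lemma_v3} converts (N2) into $v_3<0$. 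I would handle the assertions in the order second, third, first, since the pyramidal computations feed the isosceles case. For the second assertion ($2\le n<473$, any $\mu>0$) only (N1) and (N2) are at issue: here $\phi_m=0$ and $W(\phi)=f(\phi)V(\phi)=\frac{S_n}{4}+\frac{\mu\cos\phi}{\sqrt{1+(n/\mu)\sin^2\phi}}$, whose derivative $W'(\phi)=-\frac{(n+\mu)\sin\phi}{(1+(n/\mu)\sin^2\phi)^{3/2}}$ is $\le0$ on $[0,\pi/2)$, which is (N1), while (N2) follows from Mart\'{i}nez's verification in~\cite{martinez2012} of (M1) and (M2) together with the implications (M1)$\Leftrightarrow$(N1) and (M2)$\Rightarrow$(N2) --- the latter because $\phi_R\in(-\pi/2,\pi/2)$ forces $\sin^2\frac{\phi_R-\phi_m}{2}=\sin^2\frac{\phi_R}{2}<\frac12<\frac23$ --- or alternatively straight from the crude estimates $V(\phi_R)\ge\frac{S_n}{4}$ and $V(0)=\frac{S_n}{4}+\mu$. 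Hence $v_1<0$, $v_3<0$, and Theorem~\ref{thm_schubart} delivers the $\mathcal{B}$-family orbit with $k=0$.

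For the third assertion ($\mu=1$) the computations already carried out in this subsection do the work. They show $\phi_R\in(0,\pi/4]$, $W'(\phi)\le0$ on $[0,\pi/2)$ and $W(\pi/2)=S_n/4$, so (N1) holds and $v_1<0$; for $n\ge4$ the bound $|W'(\phi)/W(\phi)|\le\frac{2(n+1)}{(n+2)^{3/2}}\cdot\frac4{S_n}<\frac45$ on $[\pi/4,\pi/2)$ is (N3), so Lemma~\ref{lemma_v1}(ii)--(iii) gives $v_2>0$, while for $n=2,3$, where (N3) fails, the numerical integration of~(\ref{secondlowerbound}) yielding $g_3(\pi/2)\ge\beta=-1.32$ is (N3'), so Lemma~\ref{lemma_v1}(iv) again gives $v_2>0$; finally (N2) follows from $V(\phi_R)\ge\frac{S_n}{4}$, $V(0)=\frac{S_n}{4}+1<\frac54 S_n$ and $\sin^2(\pi/8)<\frac15$, so $v_3<0$ by Lemma~\ref{lemma_v3}. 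With $v_1<0<v_2$ and $v_3<0$ in hand for every $2\le n<473$, Theorem~\ref{thm_schubart2} yields the $\mathcal{B}$-family and Theorem~\ref{thm_type1}(i) the $\mathcal{Z}1$-family; if in addition $v_2\neq-v_3$, then Theorems~\ref{thm_schubart3},~\ref{thm_schubart4} and Theorem~\ref{thm_type1}(ii) yield the less-symmetric $\mathcal{B}$-, $\mathcal{ZB}$-, and $\mathcal{Z}5$-families.

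For the first assertion I would regard the planar isosceles three-body problem as the $n=2$ pyramidal problem with $\mu=m_3$, so that the $n=2$ case above gives $v_1<0$, $v_2>0$, $v_3<0$ at $m_3=1$. Since $v_1,v_2,v_3$ depend continuously (in fact analytically) on $m_3$ and these are strict inequalities, they persist on an open interval $I\ni1$, and on $I$ Theorem~\ref{thm_schubart2} produces the $\mathcal{B}$-family orbits. The less-symmetric $\mathcal{B}$- and $\mathcal{ZB}$-families need moreover $v_2\neq-v_3$; this is exactly the nondegeneracy under which~\cite{chen2013} establishes the $\mathcal{Z}5$ (type-$5$) brake orbits already quoted in the statement, so it is available on a (possibly smaller) open interval about $m_3=1$, where Theorems~\ref{thm_schubart3} and~\ref{thm_schubart4} then apply.

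I expect the sole point of genuine friction to be condition (N4): excluding a heteroclinic connection from $L_-$ to $T_1(L_+)$ amounts to a two-sided quantitative control of the integral curve of~(\ref{ODEforGamma}) on all of $[0,\pi/2]$, whereas the one-sided lower bounds that suffice for $v_2>0$ do not pin down the upper side. This is why $v_2\neq-v_3$ is carried as an unverified hypothesis in the third assertion (and imported from~\cite{chen2013} in the first) rather than proved here, and is the main gap one would have to close to make the equal-mass pyramidal statement unconditional.
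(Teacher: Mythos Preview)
Your proposal is correct and follows essentially the same route as the paper: the theorem is simply a summary of the verifications carried out in Section~5.1, namely checking (N1), (N2), and (N3)/(N3') for the pyramidal problem (citing Mart\'{i}nez for (M1),(M2) in the general-$\mu$ case) and then invoking the Section~3 existence theorems through Lemmas~\ref{lemma_v1} and~\ref{lemma_v3}, exactly as you lay out. Two minor remarks: your ``alternative crude estimate'' for (N2) at general $\mu$ is incomplete as written (for large $\mu$ the bound $\frac{S_n}{4}>\sin^2(\phi_R/2)(\frac{S_n}{4}+\mu)$ is not automatic), so stick with the (M2)$\Rightarrow$(N2) route; and for the isosceles assertion the paper leans on the explicit mass interval $(\epsilon_1,\epsilon_2)$ from~\cite{chen2013,simo1981,simo1987} rather than your continuity-in-$m_3$ argument, but both reach the same conclusion.
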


\subsection{The spatial double-polygon problem.}

The spatial double-polygon problem consists of $2n$ equal masses, $n\geq 2$. The configurations form two twisted regular $n-$gons of the same size in two different non-fixed horizontal planes $z=\pm q_{2}$, centered on the $z-$axis. We denote the distance between a vertex to the $z-$axis by $q_{1}$. When projected to the $xy-$plane, the two $n-$gons are different by a rotation angle of $2\pi/n$. See figure~\ref{double_gons}.

\begin{figure}[ht]
\centering
\subfloat[The projection of the $2n$ bodies on the $xy$-plane.]{\includegraphics[width=0.3\textwidth]{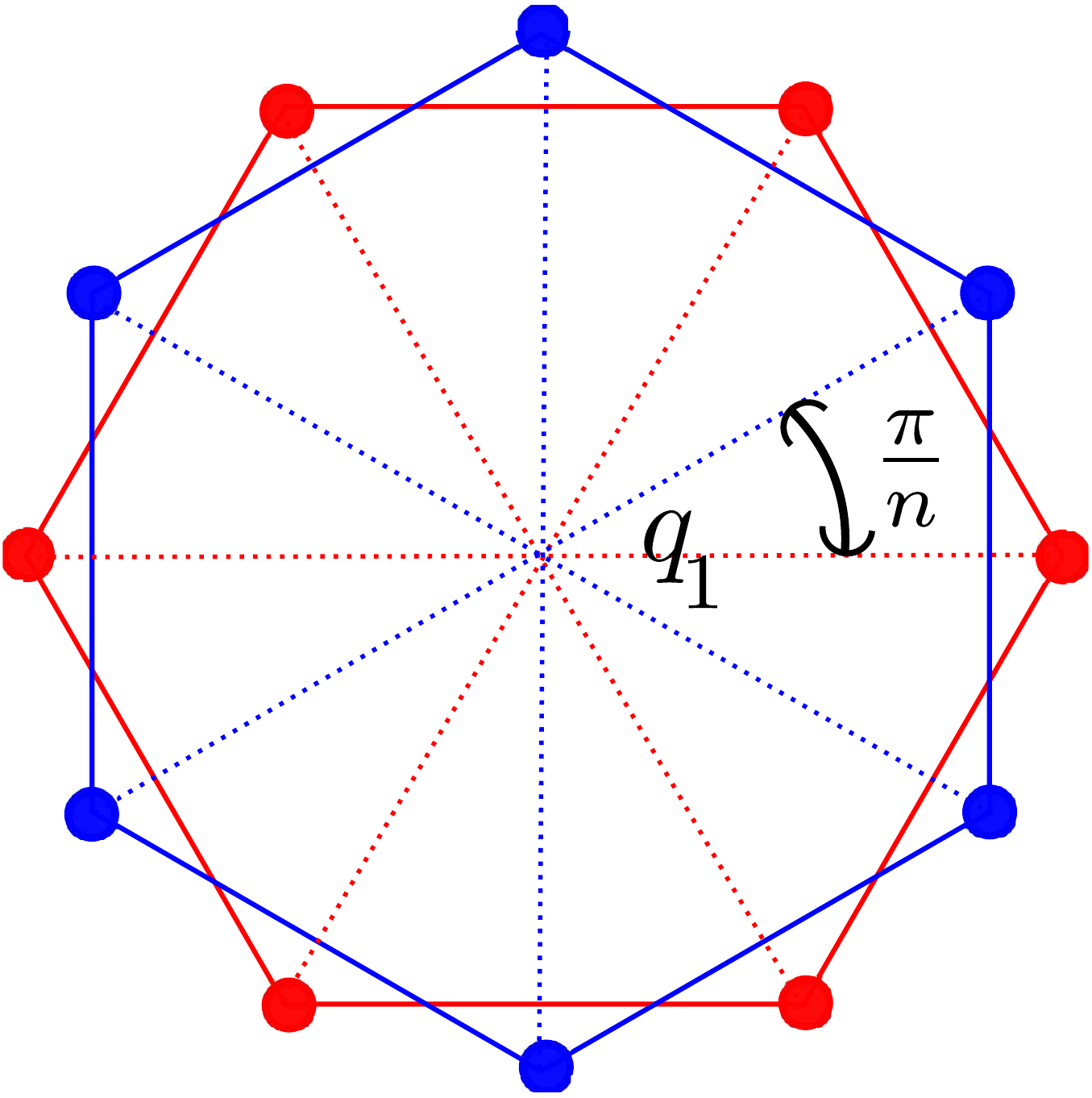}}\hfill
\subfloat[The configuration of the $2n$ bodies.]{\includegraphics[width=0.3\textwidth]{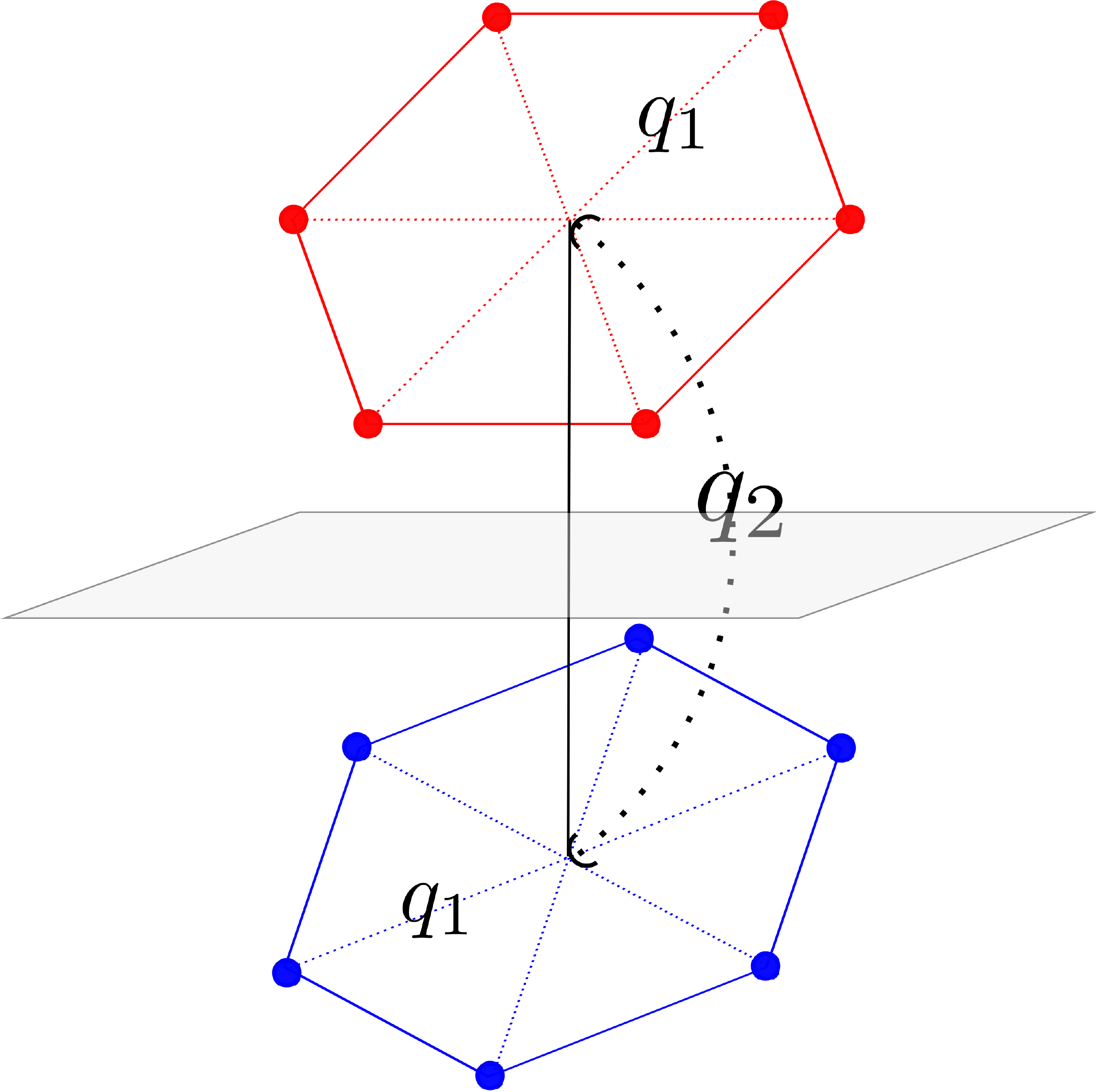}}
\caption{}
\label{double_gons}
\end{figure}

The Lagrangian is given by
\begin{equation*}
L(q_{1},q_{2},\dot{q_{1}},\dot{q_{2}})=\frac{1}{2}(\sum_{k=1}^{n-1}\frac{1}{2q_{1}\sin l_{k}}+\sum_{k=1}^{n}\frac{1}{\sqrt{(2q_{1}\sin\frac{l_{2k-1}}{2})^2+q_{2}^2}})+\frac{1}{2}(\dot{q_{1}}^2+\frac{\dot{q_{2}}^2}{4}),
\end{equation*}
where $l_{k}=\pi k/n$.

In Devaney's coordinates, write $(q_{1},q_{2})=r(\cos\phi,2\sin\phi)$.
Then
\begin{equation}
\begin{aligned}
V(\phi)&=\frac{1}{2}(\sum_{k=1}^{n-1}\frac{1}{2\cos\phi\sin l_{k}}+\sum_{k=1}^{n}\frac{1}{\sqrt{(2\cos\phi\sin\frac{l_{2k-1}}{2})^2+4\sin^{2}\phi}}) \\   
 &=\frac{S_{n}}{4}\frac{1}{\cos\phi}+\frac{1}{4}\sum_{k=1}^{n}\frac{1}{\sigma_{k}},   
\end{aligned}
\label{potentialV2}
\end{equation}
where $S_{n}=\sum_{k=1}^{n-1}\csc l_{k}$, $\sigma_{k}=(1-c_{k}^2\cos^2\phi)^{1/2}$, and $c_{k}=\cos\frac{\pi}{2n}(2k-1)$.

\begin{lemma}
$V(\phi)$ has exactly three critical points in $(-\pi/2,\pi/2)$, all of which are non-degenerate. They are at $\phi=-\phi_{R},0,\phi_{R}$, where $\phi_{R}\in (0,\pi/4)$.
\end{lemma}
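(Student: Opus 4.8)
The plan is to analyze the function $V(\phi)$ on $[0,\pi/2)$ directly, using the symmetry $V(\phi) = V(-\phi)$ (clear from \eqref{potentialV2}, since both $\cos^2\phi$ and $\sec\phi$ are even) to reduce everything to the half-interval; then $\phi=0$ is automatically a critical point, and it suffices to show $V$ has exactly one critical point in $(0,\pi/2)$, that it is a non-degenerate minimum, and that $\phi=0$ is a non-degenerate maximum. I would write $V(\phi) = \frac{S_n}{4}\sec\phi + \frac14\sum_{k=1}^n \sigma_k^{-1}$ with $\sigma_k = (1-c_k^2\cos^2\phi)^{1/2}$, $c_k = \cos\frac{\pi}{2n}(2k-1) \in (0,1)$, and compute $V'(\phi)$. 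Both terms have derivatives proportional to $\sin\phi$: $\frac{d}{d\phi}\sec\phi = \sec\phi\tan\phi$ and $\frac{d}{d\phi}\sigma_k^{-1} = -c_k^2 \sin\phi\cos\phi\,\sigma_k^{-3}$, so $V'(\phi) = \sin\phi\, \cdot H(\phi)$ where
\[
H(\phi) = \frac{S_n}{4}\frac{1}{\cos^2\phi} - \frac{\cos\phi}{4}\sum_{k=1}^n \frac{c_k^2}{\sigma_k^{3}}.
\]
Thus the critical points in $(0,\pi/2)$ are exactly the zeros of $H$, and I must show $H$ has precisely one zero there, with $H'$ nonzero at it (giving non-degeneracy of the corresponding critical point of $V$), while $H(0) < 0$ (so that $\phi=0$ is a non-degenerate max: indeed $V''(0) = H(0)$).

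The key steps, in order: (1) Establish $H(0) < 0$, i.e. $S_n < \sum_{k=1}^n c_k^2(1-c_k^2)^{-3/2}$; this is the analogue of the ``$V$ has a maximum at $0$'' computation and should follow by comparing $S_n = \sum_{k=1}^{n-1}\csc(\pi k/n)$ termwise with the right-hand sum, or by a direct estimate since each $c_k$ close to $1$ makes $(1-c_k^2)^{-3/2}$ large. (2) Establish $\lim_{\phi\to\pi/2^-} H(\phi) = +\infty$: the first term $\frac{S_n}{4}\sec^2\phi \to +\infty$, while $\cos\phi\sum_k c_k^2\sigma_k^{-3}$ — one must check the worst term ($c_k$ closest to $1$, i.e. $k$ near $n/2$ has $c_k$ near... actually $c_k = \cos\frac{(2k-1)\pi}{2n}$ is largest for $k=1$, where $c_1 = \cos\frac{\pi}{2n}\to 1$) stays bounded: as $\phi\to\pi/2$, $\sigma_k = (1-c_k^2\cos^2\phi)^{1/2}\to 1$, so the whole sum $\cos\phi\sum_k c_k^2\sigma_k^{-3}\to 0$. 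Hence $H\to+\infty$, so by the intermediate value theorem $H$ has at least one zero in $(0,\pi/2)$. (3) Prove uniqueness and non-degeneracy: show $H$ is \emph{monotone increasing} wherever $H = 0$, or better, show $H' > 0$ on all of $(0,\pi/2)$, which gives both uniqueness of the zero and $H'(\phi_R) > 0$; then $V'(\phi) = \sin\phi\,H(\phi)$ changes sign from $-$ to $+$ at $\phi_R$, so it is a minimum, and $V''(\phi_R) = \sin\phi_R\,H'(\phi_R) > 0$ confirms non-degeneracy. Monotonicity of $H$ would come from differentiating: $\frac{S_n}{4}\frac{d}{d\phi}\sec^2\phi = \frac{S_n}{2}\sec^2\phi\tan\phi > 0$, and one checks the derivative of $-\frac{\cos\phi}{4}\sum c_k^2\sigma_k^{-3}$ — this term's sign is less obvious because $\cos\phi$ decreases (helping) but $\sigma_k^{-3}$ changes too; a safe route is to note $\frac{d}{d\phi}(\cos\phi\,\sigma_k^{-3})$ and bound it, or to argue via the substitution $u = \cos^2\phi$ which makes the first term $\propto u^{-1}$ and the others $\propto (1-c_k^2 u)^{-3/2}\sqrt{u}$, both with controllable derivatives in $u$. (4) Finally, show $\phi_R < \pi/4$, i.e. $H(\pi/4) > 0$: at $\phi = \pi/4$, $\cos^2\phi = 1/2$, so this reads $\frac{S_n}{2} > \frac{1}{\sqrt 2}\sum_k \frac{c_k^2}{(1 - c_k^2/2)^{3/2}}$; since $c_k < 1$ gives $(1-c_k^2/2)^{-3/2} < 2^{3/2}$, the right side is at most $\frac{1}{\sqrt2}\cdot 2^{3/2} n = 2n$, and one compares with $S_n/2$. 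Here I would use the known asymptotic/lower bound $S_n = \sum_{k=1}^{n-1}\csc(\pi k/n) \geq \frac{2n}{\pi}\ln n$ (or a cruder explicit bound) — but $2n/\pi \cdot \ln n$ vs $4n$ only works for $n$ large, so for small $n$ this needs either a sharper estimate of the $c_k$-sum (using that $c_k$ is small, hence $(1-c_k^2/2)^{-3/2}$ close to $1$, for $k$ away from $1$) or a finite case-check.

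The main obstacle I anticipate is step (3), establishing that $H$ has a unique zero — equivalently controlling the monotonicity or at least the sign-change structure of the competition between the rapidly-growing $\sec^2\phi$ term and the sum $\cos\phi\sum c_k^2\sigma_k^{-3}$, whose behavior is governed by the term with $c_1 = \cos\frac{\pi}{2n}$ nearest to $1$. A clean way around a brute-force derivative computation is to rewrite the equation $H(\phi)=0$ as $\frac{S_n}{4} = \frac{\cos^3\phi}{4}\sum_k \frac{c_k^2}{\sigma_k^3}$ and argue that the right-hand side is strictly \emph{decreasing} in $\phi\in(0,\pi/2)$ — because $\cos^3\phi$ decreases and each $\sigma_k^{-3} = (1-c_k^2\cos^2\phi)^{-3/2}$ also decreases in $\phi$ (as $\cos^2\phi$ decreases and $c_k^2 > 0$, the argument $1 - c_k^2\cos^2\phi$ increases) — so the equation has at most one solution; combined with the boundary values from (1) and (2) it has exactly one. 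This decreasing-RHS argument is the crisp version of the uniqueness step and sidesteps the worst of the computation; the non-degeneracy $V''(\phi_R)\neq 0$ then follows since the two sides cross transversally (strictly decreasing RHS meets a constant). The remaining estimates in (1) and (4) should be routine comparisons of the cosecant sum $S_n$ with sums of $c_k$-expressions, plausibly handled with a couple of lines plus, if necessary, verification for the finitely many small $n$ not covered by an asymptotic bound.
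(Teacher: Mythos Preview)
Your approach is essentially identical to the paper's: both factor $V'(\phi)=\sin\phi\cdot(\text{something})$, and your rewriting of $H(\phi)=0$ as $S_n=\cos^3\phi\sum_k c_k^2/\sigma_k^3$ with strictly decreasing right-hand side is exactly the paper's comparison of $h(\phi)=\sec^3\phi$ (increasing) against $g_n(\phi)=\frac{1}{S_n}\sum_k c_k^2/\sigma_k^3$ (decreasing). Your step~(2) becomes redundant once step~(4) is done, since $H(0)<0$ and $H(\pi/4)>0$ already produce a zero in $(0,\pi/4)$.

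The only place your outline needs tightening is step~(4). There is a dropped factor of $1/4$ in your displayed inequality (it should read $\tfrac{S_n}{2}>\tfrac{1}{4\sqrt{2}}\sum_k c_k^2(1-c_k^2/2)^{-3/2}$), and the crude bound $\sum_k c_k^2\le n$ is too weak. Replace it by the exact identity $\sum_{k=1}^{n}\cos^2\frac{(2k-1)\pi}{2n}=\frac{n}{2}$ (immediate from $\sum_{k=1}^n e^{i(2k-1)\pi/n}=0$). With both corrections, the estimate $(1-c_k^2/2)^{-3/2}<2\sqrt{2}$ gives a right-hand side strictly below $n/4$, so you need only $S_n\ge n/2$; this follows from $S_n\ge n-1$ for all $n\ge 2$, and no asymptotics or finite case-check is required. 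This is precisely what the paper does.
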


\begin{proof}
\begin{equation}
\begin{aligned}
V'(\phi)&=\frac{S_{n}}{4}\sec\phi\tan\phi-\frac{1}{4}\sum_{k=1}^{n}\frac{c_{k}^2}{\sigma_{k}^3}\cos\phi\sin\phi \\
&=\frac{S_{n}}{4}\sin\phi\cos\phi(h(\phi)-g_{n}(\phi)),
\end{aligned}
\end{equation}
where $h(\phi)=\sec^{3}\phi$ and $g_{n}(\phi)=\frac{1}{S_{n}}\sum_{k=1}^{n}\frac{c_{k}^2}{\sigma_{k}^3}$.

Obviously $\phi=0$ is a critical point of $V(\phi)$. To show that $V(\phi)$ has exactly one critical point $\phi_{R}$ in $(0,\pi/2)$ and that $\phi_{R}\in (0,\pi/4)$, we will show that $g_{n}(\phi)$(resp. $h(\phi)$) is a decreasing (resp. increasing) function of $\phi\in (0,\pi/2)$, that $h(0)<g(0)$, and that $h(\pi/4)>g(\pi/4)$.

Since $\cos\phi$ is strictly decreasing in $(0,\pi/2)$, the increasing or decreasing properties of $h(\phi)$ and $g_{n}(\phi)$ are obvious. Now we show that $h(0)<g(0)$. Note that $h(0)=1$, and that to show  $g(0)>1$ is equivalent to show that $\sum_{k=1}^{n}\frac{c_{k}^2}{(1-c_{k}^2)^{3/2}}>S_{n}$. Actually, the left-hand side of this inequality is much greater than the right-hand side. It is straightforward to verify the case when $n=1,2$. For $n\geq 3$, by using the two relations: $\cot^{2}\frac{\pi}{2n}\geq n$ for $n\geq 3$ and $\sin\frac{\pi}{2n}<\sin\frac{\pi}{n}(k-1)$ for $k=2,\cdots, n$, a very rough estimate will prove this inequality as follows:
\begin{equation*}
\begin{aligned}
\sum_{k=1}^{n}\frac{c_{k}^2}{(1-c_{k}^2)^{3/2}}\geq& \frac{c_{1}^2}{(1-c_{1}^2)^{3/2}}=(\cot^{2}\frac{\pi}{2n})\frac{1}{\sin\frac{\pi}{2n}}\geq \frac{n}{\sin\frac{\pi}{2n}}>\sum_{k=1}^{n-1}\frac{1}{\sin\frac{\pi}{n}(k)}=S_{n}.\\
\end{aligned}
\end{equation*}

Now we show that $h(\pi/4)>g(\pi/4)$. We have $h(\pi/4)=2\sqrt{2}$. 
\begin{equation*}
g_{n}(\frac{\pi}{4})=\frac{1}{S_{n}}\sum_{k=1}^{n}\frac{c_{k}^2}{(1-\frac{1}{2}c_{k}^2)^{3/2}}\leq\frac{1}{S_{n}}\sum_{k=1}^{n}\frac{c_{k}^2}{(1/2)^{3/2}}=\frac{2\sqrt{2}}{S_{n}}\sum_{k=1}^{n}c_{k}^2=\frac{2\sqrt{2}}{S_{n}} \frac{n}{2}\leq 2\sqrt{2}, 
\end{equation*}
where we use the fact $S_{n}=\sum_{k=1}^{n-1}\frac{1}{\sin\frac{\pi}{n}(k)}\geq n-1$ at the last step.

Finally we show non-degeneracy. Since
\begin{equation*}
V''(\phi)= \frac{S_{n}}{4}(\cos 2\phi(h(\phi)-g_{n}(\phi))+\frac{1}{2}\sin 2\phi(h'(\phi)-g_{n}'(\phi))),
\end{equation*}
$V''(0)=\frac{S_{n}}{4}(h(0)-g(0))<0$, and
$V''(\phi_{\ast})=\frac{S_{n}}{8}\sin 2\phi_{\ast}(h'(\phi_{\ast})-g_{n}'(\phi_{\ast})))>0$.

\end{proof}

Now we study the function $\frac{W'(\phi)}{W(\phi)}$. See figure~\ref{RatioPyramidal}(b) for its graph.

 \begin{lemma} $ $
 
 \begin{enumerate}
 \item[(i)] \textbf{Condition (N1)} $\frac{W'(\phi)}{W(\phi)}\leq 0$ for $\phi\in [0,\pi/2)$.
 \item[(ii)] \textbf{Condition (N2)} Let $n\geq 10$. Then $W(\frac{\pi}{2})=\frac{S_{n}}{4}$ and $|\frac{W'(\phi)}{W(\phi)}|\leq\frac{4}{5}$ for $\phi\in [\pi/4,\pi/2)$.
\end{enumerate}
\end{lemma}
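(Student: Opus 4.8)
The plan is to make $W$ explicit, read off (i) and the value $W(\pi/2)=S_n/4$ directly, and reduce the bound in (ii) to a single elementary inequality between $S_n$ and a sum of $n$ uniformly bounded terms.

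Since $\phi_b-\phi_a=\pi$, here $f(\phi)=\sin(\phi_b-\phi)=\cos\phi$, so multiplying~(\ref{potentialV2}) by $\cos\phi$ gives
\[
W(\phi)=\cos\phi\,V(\phi)=\frac{S_n}{4}+\frac{\cos\phi}{4}\sum_{k=1}^{n}\frac{1}{\sigma_k},
\qquad \sigma_k=\bigl(1-c_k^{2}\cos^{2}\phi\bigr)^{1/2}.
\]
Each $\sigma_k>0$ because $c_k^{2}<1$ for $1\le k\le n$, so $W>0$ on $[0,\pi/2)$, and $\phi=\pi/2$ gives $W(\pi/2)=S_n/4$. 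Differentiating termwise and using the identity $\sigma_k^{2}+c_k^{2}\cos^{2}\phi=1$, one gets $\dfrac{d}{d\phi}\dfrac{\cos\phi}{\sigma_k}=-\dfrac{\sin\phi}{\sigma_k^{3}}$, hence
\[
W'(\phi)=-\frac{\sin\phi}{4}\sum_{k=1}^{n}\frac{1}{\sigma_k^{3}}\le 0
\quad\text{on }[0,\pi/2),
\]
which together with $W>0$ proves (i).

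For the bound in (ii), these two displays show that $|W'(\phi)|\le\tfrac45 W(\phi)$ is equivalent to $\sum_{k=1}^{n}\sigma_k^{-1}\bigl(\sin\phi\,\sigma_k^{-2}-\tfrac45\cos\phi\bigr)\le\tfrac45 S_n$. On $[\pi/4,\pi/2)$ one has $\sigma_k^{2}\ge 1-\cos^{2}\phi=\sin^{2}\phi$, hence $\sin\phi\,\sigma_k^{-2}-\tfrac45\cos\phi\le\sin^{-1}\phi-\tfrac45\cos\phi$, and a one-variable check shows $\sin^{-1}\phi-\tfrac45\cos\phi\le 1$ there (equivalently $1-\sin\phi\le\tfrac25\sin 2\phi$, clear on $[\pi/4,\pi/2)$); also $\cos^{2}\phi\le\tfrac12$ forces $\sigma_k^{-1}\le(1-\tfrac12 c_k^{2})^{-1/2}$. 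Combining these, each summand is $\le(1-\tfrac12 c_k^{2})^{-1/2}$, so it suffices to establish the $\phi$-free inequality
\[
\sum_{k=1}^{n}\Bigl(1-\tfrac12\cos^{2}\tfrac{(2k-1)\pi}{2n}\Bigr)^{-1/2}\ \le\ \tfrac45\,S_n
\qquad (n\ge 10).
\]

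This last inequality is the one delicate point: it is nearly tight at $n=10$ (left side $\approx 11.80$, right side $\approx 12.36$), so the estimates leading to it cannot be relaxed. The plan is to verify it by direct computation for a finite range $10\le n\le N_0$, as is done for the other two applications in the paper, and for $n>N_0$ to combine the crude bound $\sum_{k}(1-\tfrac12 c_k^{2})^{-1/2}<\sqrt2\,n$ (each summand lies in $[1,\sqrt2)$) with the lower bound $S_n\ge\tfrac{2n}{\pi}\sum_{j=1}^{\lfloor n/4\rfloor}\tfrac1j+\bigl(n-1-2\lfloor n/4\rfloor\bigr)$, obtained by applying $\csc x\ge 1/x$ to the extreme terms; the latter grows like $\tfrac{2n}{\pi}\ln n$ and so exceeds $\tfrac{5\sqrt2}{4}\,n$ for all large $n$, which fixes $N_0$. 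Everything else — the formula for $W$, part (i), and the reduction itself — is routine; the only care needed is to keep the constants tight through this final comparison.
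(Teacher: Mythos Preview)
Your reduction is correct and the final $\phi$-free inequality is true for $n\ge 10$, but the route is genuinely different from the paper's. The paper does \emph{not} try to bound the summands in $W'/W$ termwise. Instead it observes that $4|W'(\phi)|=\sum_{k=1}^{n}f_\phi\bigl(\tfrac{(2k-1)\pi}{2n}\bigr)$ with $f_\phi(x)=\sin\phi\,(1-\cos^2x\cos^2\phi)^{-3/2}$, recognizes this sum as a lower Riemann sum for $\int_0^\pi f_\phi(x)\,dx$, and evaluates that integral in closed form via the complete elliptic integral $E$: one gets $\int_0^\pi f_\phi\,dx=2E(-\cot^2\phi)$, whose maximum on $[\pi/4,\pi/2)$ is $2E(-1)\approx 3.82$. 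Thus $4|W'(\phi)|\le \tfrac{\delta n}{\pi}$ with $\delta=3.83$ uniformly, and combining with $4W(\phi)\ge S_n$ gives $|W'/W|\le (\delta/\pi)\,n/S_n$, so the whole problem collapses to the single condition $S_n/n>5\delta/(4\pi)\approx 1.524$, which holds for $n\ge 10$. Your elementary termwise bounds avoid the elliptic-integral machinery entirely, at the cost of a slightly stiffer final comparison (your crude tail needs $S_n/n>5\sqrt{2}/4\approx 1.768$, forcing a somewhat larger $N_0$ of hand-checked cases). Either way the endgame is the same: an inequality between $S_n$ and something linear in $n$.

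One small gap to close in your write-up: when you pass from the bound $\sin\phi\,\sigma_k^{-2}-\tfrac45\cos\phi\le 1$ and $\sigma_k^{-1}\le(1-\tfrac12 c_k^2)^{-1/2}$ to ``each summand is $\le(1-\tfrac12 c_k^2)^{-1/2}$'', you are implicitly using that the bracket is nonnegative (otherwise bounding $\sigma_k^{-1}$ from above goes the wrong way). This is true on $[\pi/4,\pi/2)$ since $\sigma_k^{-2}\ge 1$ gives $\sin\phi\,\sigma_k^{-2}-\tfrac45\cos\phi\ge \sin\phi-\tfrac45\cos\phi\ge\tfrac{1}{5}\cdot\tfrac{\sqrt2}{2}>0$, but you should say so.
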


\begin{proof}
Since $W(\phi)=\frac{S_{n}}{4}+\frac{\cos\phi}{4}\sum_{k=1}^{n}\frac{1}{\sigma_{k}}$, we have 
$4W'(\phi)=-\sin\phi\sum_{k=1}^{n}\frac{1}{\sigma_{k}^3}$. Clearly, $W(0)=\pi/4$, $W'(\phi)\leq 0$ and $W(\phi)>0$ in $[0,\pi/2)$.

We then show that $4|W'(\phi)|\leq \delta n/\pi$ for $\phi\in [\pi/4,\pi/2)$, where $\delta=3.83$. We write
\begin{align*}
4|W'(\phi)|&=\sum_{k=1}^{n}\frac{\sin\phi}{(1-c_{k}^2\cos^{2}\phi)^{3/2}}, c_{k}=\cos\frac{\pi(2k-1)}{2n},\\
f_{\phi}(x)=f(x;\phi)&:=\frac{\sin\phi}{(1-\cos^{2}x\cos^{2}\phi)^{3/2}},
\end{align*}
then $f_{\phi}(x)$, as a function of $x$, is Riemann integrable over the interval $[0,\pi/2]$. Since $f_{\phi}(x)$ is decreasing in $[0,\pi/2]$ and increasing in $[\pi/2,\pi]$, we have
\begin{equation*}
\frac{\pi}{n}\sum_{k=1}^{n}f_{\phi}(\frac{2k-1}{2n}\pi)+\frac{\pi}{n}f_{\phi}(\frac{\pi}{2})\leq \int_{0}^{\pi}f_{\phi}(x) dx := h(\phi),
\end{equation*}
where the expression on the left-hand side equals the lower Riemann sum of the integral.

To find the maximum of $h(\phi)$, we write
\begin{align*}
 f(x;\phi)&=-\frac{d}{d\phi}\frac{\cos\phi}{\sqrt{1-\cos^{2}x\cos^{2}\phi}}. \\
 \frac{1}{2}h(\phi)&=\int_{0}^{\frac{\pi}{2}}f(x;\phi) dx = -\frac{d}{d\phi}\int_{0}^{\frac{\pi}{2}}\frac{\cos\phi}{\sqrt{1-\cos^{2}x\cos^{2}\phi}}dx \\
 &= -\frac{d}{d\phi} \cos\phi K(\cos^{2}\phi) = \frac{1}{\sin\phi} E(\cos^{2}\phi) \\
 &= E(-\cot^{2}\phi),
\end{align*}
where the elliptic integrals are defined by
\[K(m)=\int_{0}^{\frac{\pi}{2}}\frac{1}{\sqrt{1-m\cos^{2}\theta}}d\theta, \ \  E(m)=\int_{0}^{\frac{\pi}{2}}\sqrt{1-m\sin^{2}\theta}d\theta,\]
and we use the fact that \[\frac{dK(m^2)}{dm}=\frac{E(m^2)}{m(1-m)}-\frac{K(m^2)}{m}.\]
Therefore, the maximum of $h(\phi)$ in $[\pi/4,\pi/2)$ is at $h(\pi/4)=2E(-1)\approx 3.82<\delta$.

This implies that  $4|W'(\phi)|\leq \delta n/\pi$. On the other hand, $4W(\phi)\geq S_{n}$. Therefore, 
\begin{equation}\label{estimate}
|\frac{W'(\phi)}{W(\phi)}|\leq \frac{\delta}{\pi} \frac{n}{S_{n}}<\frac{4}{5}, 
\end{equation}
provided $\frac{S_{n}}{n}>\frac{5\delta}{4\pi}> 1.524$. The numerical estimate of the sequence $S_{n}/n$ is included in Appendix.

\end{proof}

We still need to prove that $v_{2}>0$ in the case $2\leq n\leq 9$, when the condition (N3) is either untrue or not verified. First, the case $n=2$ is the so called tetrahedral 4-body problem, in which the flow on the collision manifold has been studied in ~\cite{vidal1999}, where its theorem 1 implies that $v_{2}>0$. As for the cases left, we have verified the condition (N3') stated in lemma~\ref{lemma_v1}(iv). We compute the value of $g_{3}(\pi/2)$, where $g_{3}$ is the solution of~(\ref{secondlowerbound}). The result is summarized in the following table. One sees that $g_{3}(\pi/2)\geq \beta_{1}=-1.32$ for $3\leq n\leq 9$. This implies that $v_{2}>0$.
\begin{table}[h!]
\centering
\resizebox{\columnwidth}{!}{%
\begin{tabular}{|c|c|c|c|c|c|c|c|c|c|}
\hline 
 $n$ & 2 & 3 & 4 & 5 & 6 & 7 & 8 & 9 \\ 
\hline 
 $g_{3}(\pi/2)$ & -1.41124 & -1.28340 & -1.21070 & -1.16294 & -1.12866 & -1.10259 & -1.08191 & -1.06499 \\
\hline 
\end{tabular} }
\end{table}

\begin{lemma}
\textbf{(N3)} Let $n\geq 2$. Then $V(\phi_{R})\geq \sin^{2}\frac{\phi_{R}-\phi_{m}}{2} V(\phi_{m})$.
\end{lemma}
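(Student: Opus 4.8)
The plan is to bound $V(\phi_R)$ from below and $V(\phi_m)$ from above, both in terms of $S_n$, and then use that $\phi_R$ is small. Since the critical points are $-\phi_R,0,\phi_R$, here $\phi_m=0$, so the claim reads $V(\phi_R)\geq\sin^2(\phi_R/2)\,V(0)$. From the formula for $V$ in~(\ref{potentialV2}) both summands are positive, so dropping the second one and using $\cos\phi_R\leq 1$ gives $V(\phi_R)\geq\frac{S_n}{4\cos\phi_R}\geq\frac{S_n}{4}$. For the upper bound, observe that $1-c_k^2=\sin^2\frac{(2k-1)\pi}{2n}$, hence at $\phi=0$ one has $\sigma_k=\sin\frac{(2k-1)\pi}{2n}$ and therefore $V(0)=\frac{S_n}{4}+\frac14 T_n$ with $T_n:=\sum_{k=1}^{n}\csc\frac{(2k-1)\pi}{2n}$. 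The proof then reduces entirely to the elementary inequality $T_n\leq 4S_n$.

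To prove $T_n\leq 4S_n$, I would first isolate the extreme terms $k=1$ and $k=n$: by $\csc(\pi-x)=\csc x$ both equal $\csc\frac{\pi}{2n}$, and from $\sin\frac{\pi}{n}=2\sin\frac{\pi}{2n}\cos\frac{\pi}{2n}\leq 2\sin\frac{\pi}{2n}$ we get $\csc\frac{\pi}{2n}\leq 2\csc\frac{\pi}{n}$, so together they contribute at most $4\csc\frac{\pi}{n}$. For $2\leq k\leq n-1$ the angle $\frac{(2k-1)\pi}{2n}$ lies strictly between $\frac{(k-1)\pi}{n}$ and $\frac{k\pi}{n}$, both of which lie in $(0,\pi)$; since $\csc$ is convex on $(0,\pi)$, its value at an interior point of that subinterval is at most the larger of the two endpoint values, hence $\csc\frac{(2k-1)\pi}{2n}\leq\csc\frac{(k-1)\pi}{n}+\csc\frac{k\pi}{n}$. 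Summing over $k=2,\dots,n-1$, each of the two shifted sums equals $S_n-\csc\frac{\pi}{n}$, so the middle terms contribute at most $2S_n-2\csc\frac{\pi}{n}$. Adding the extreme terms and using $\csc\frac{\pi}{n}\leq S_n$ (it is one of the summands of $S_n$ when $n\geq 3$, and an equality when $n=2$) gives $T_n\leq 2S_n+2\csc\frac{\pi}{n}\leq 4S_n$.

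Putting this together, $V(0)=\frac{S_n}{4}+\frac14 T_n\leq\frac{S_n}{4}+S_n=\frac{5S_n}{4}\leq 5\,V(\phi_R)$, so $V(\phi_R)\geq\frac15 V(0)$. By the preceding lemma $\phi_R\in(0,\pi/4)$, whence $\sin^2\frac{\phi_R}{2}<\sin^2\frac{\pi}{8}=\frac{2-\sqrt2}{4}<\frac15$; therefore $V(\phi_R)\geq\frac15 V(0)>\sin^2\frac{\phi_R}{2}\,V(0)=\sin^2\frac{\phi_R-\phi_m}{2}\,V(\phi_m)$, which is condition (N2) (indeed in its strict form).

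The only step with real content is $T_n\leq 4S_n$, and the one place to watch is the small cases $n=2,3$, where the sum over $2\leq k\leq n-1$ is empty or a single term; but there the extreme-term estimate $2\csc\frac{\pi}{2n}\leq 4\csc\frac{\pi}{n}\leq 4S_n$ already closes the argument, so no genuine case split is required. (Equivalently, since $T_n=S_{2n}-S_n$ this is the statement $S_{2n}\leq 5S_n$; the convexity estimate above has the advantage of being self-contained and avoiding asymptotics of $S_n$.)
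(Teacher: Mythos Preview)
Your proof is correct and follows the same overall strategy as the paper: bound $V(\phi_R)\geq S_n/4$ from below, bound $V(0)=\frac{S_n}{4}+\frac{T_n}{4}$ from above via an inequality of the form $T_n\leq cS_n$, and then close using $\phi_R<\pi/4$.

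The difference lies only in how the term comparison $T_n\leq cS_n$ is obtained. The paper proves the sharper $T_n<3S_n$ (for $n\geq 3$) by bounding each ratio $a_k/s_k$ via an addition-formula identity and handling the last two terms separately, then invokes the cruder $\sin^2(\pi/8)\leq 1/4$; the case $n=2$ is done by direct evaluation. Your route uses the convexity of $\csc$ on $(0,\pi)$ to get $T_n\leq 4S_n$ uniformly in $n\geq 2$, and compensates with the sharper $\sin^2(\pi/8)<1/5$. Your argument is slightly more conceptual and avoids the $n=2$ case split; the paper's gives a better constant but needs a bit more bookkeeping. Either version suffices for the lemma.
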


\begin{proof}
Since $\phi_{R}\in (0,\pi/4)$, $\phi_{m}=0$ and $\sin^{2}\frac{\phi_{R}-\phi_{m}}{2}\sin^{2}\leq\frac{\pi}{8}\leq\frac{1}{4}$, it is sufficient to show that $4V(\phi_{R})\geq V(0)$.

Clearly, from~(\ref{potentialV2}), $4V(\phi_{R})\geq S_{n}$. 
When $n=2$, $4V(\phi_{R})\geq S_{n}=1$ and $V(0)=\frac{1+2\sqrt{2}}{4}<1$, so $4V(\phi_{R})\geq V(0)$. We then consider the case when $n\geq 3$.

Write 
\begin{equation*}
\begin{aligned}
4V(0)=&S_{n}+\sum_{k=1}^{n}\frac{1}{\sin\frac{\pi}{2n}(2k-1)}=\sum_{k=1}^{n-1} s_{k}+ \sum_{k=1}^{n} a_{k},
\end{aligned}
\end{equation*}
where $s_{k}=\csc\frac{\pi}{n}k\geq 0$ and $a_{k}=\csc\frac{\pi}{2n}(2k-1)\geq 0$.

It is sufficient to show that $a_{k}\leq 2 s_{k}$ for $k=1,\cdots, n-2$ and that $a_{n-1}+a_{n}< 3 s_{n-1}$. Since if these two conditions are true, then $\sum_{k=1}^{n} a_{k}< 3 \sum_{k=1}^{n-1} s_{k}= 3 S_{n}$, and therefore $4V(0)< 4 S_{n}$. This implies that $4V(\phi_{R})\geq V(0)$. 

Now we show that $a_{k}\leq 2 s_{k}$ for $k=1,\cdots, n-2$. 
\begin{equation*}
\begin{aligned}
0\leq\frac{a_{k}}{s_{k}}=&\frac{\sin\frac{2k}{2n}\pi}{\sin\frac{2k-1}{2n}\pi}=\frac{\sin\frac{2k-1}{2n}\pi\cos\frac{\pi}{2n}+\cos\frac{2k-1}{2n}\pi\sin\frac{\pi}{2n}}{\sin\frac{2k-1}{2n}\pi} \\
&=\cos\frac{\pi}{2n}+\cos\frac{(2k-1)\pi}{2n}\frac{\sin\frac{\pi}{2n}}{\sin\frac{2k-1}{2n}\pi}\leq 2.
\end{aligned}
\end{equation*}

Finally, we show that $a_{n-1}+a_{n}<3s_{n-1}$. Since $\frac{a_{n}}{s_{n-1}}=2\cos\frac{\pi}{2n}\leq 2$ and $\frac{a_{n-1}}{s_{n-1}}=\sin\frac{2\pi}{2n}\csc\frac{3\pi}{2n}<1$ when $n\geq 3$,
therefore, $a_{n-1}+a_{n}<3s_{n-1}$.

\end{proof}

Finally, we formally state the conclusion.
\begin{theorem}
In the equal-mass spatial double-polygon problem, when $n\geq 2$, there 
exist $\mathcal{B}-$family and $\mathcal{Z}1-$family periodic orbits. If moreover, the hypothesis $v_{2}\neq -v_{3}$ is true, then there exist $\mathcal{Z}5-$family, less-symmetric $\mathcal{B}-$family, and $\mathcal{ZB}-$family periodic orbits as well.
\end{theorem}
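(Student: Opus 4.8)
\emph{Proof proposal.} The plan is to treat this as an assembly result: all of the genuinely new work has already been done in Sections~3 and~4 and in the three preceding lemmas of this subsection, so what remains is to verify that the equal-mass spatial double-polygon problem fits the Lagrangian framework of Section~2 and then to read off which families the verified sign conditions produce. First I would confirm that, after passing to Devaney's coordinates via $(q_{1},q_{2})=r(\cos\phi,2\sin\phi)$, this problem has the required form with $A=\mathrm{diag}(1,\tfrac14)$ and with $V(\phi)$ given by~(\ref{potentialV2}) satisfying A.1--A.3. Indeed $V$ is homogeneous of degree $-1$ with $V(\phi)=\tfrac{\beta_{1}}{\sin(\phi_{b}-\phi)}+\widehat V(\phi)$, $\phi_{a}=-\pi/2$, $\phi_{b}=\pi/2$, $\beta_{1}=S_{n}/4>0$, $\beta_{2}=0$ (since $\phi_{b}-\phi_{a}=\pi$), $f(\phi)=\cos\phi$, and $\widehat V(\phi)=\tfrac14\sum_{k}\sigma_{k}^{-1}>0$ smooth and bounded on $[\phi_{a},\phi_{b}]$; A.2 is the symmetry about $\phi_{m}=0$; and A.3 --- exactly three non-degenerate critical points $-\phi_{R}<0<\phi_{R}$ with $\phi_{R}\in(0,\pi/4)$ --- is precisely the first lemma proved above.

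Next I would pin down the signs of $v_{1},v_{2},v_{3}$ for every $n\ge 2$ using the lemmas of Section~4. Condition (N1), i.e. $W'(\phi)\le 0$ on $[0,\pi/2)$, together with $\phi_{R}\in(0,\pi/4]$ and $W(\pi/2)=S_{n}/4$, has been checked here, so Lemma~\ref{lemma_v1}(i) gives $v_{1}<0$. The inequality $V(\phi_{R})\ge\sin^{2}\!\bigl(\tfrac{\phi_{R}-\phi_{m}}{2}\bigr)V(\phi_{m})$ has likewise been established for all $n\ge 2$, so Lemma~\ref{lemma_v3} gives $v_{3}<0$. For $v_{2}>0$ the argument splits: for $n\ge 10$ the bound $|W'/W|\le 4/5$ on $[\pi/4,\pi/2)$ holds, so Lemma~\ref{lemma_v1}(ii)--(iii) applies; for $3\le n\le 9$ that uniform bound fails, but the comparison problem~(\ref{secondlowerbound}) integrates to $g_{3}(\pi/2)\ge\beta$ (the table above), so Lemma~\ref{lemma_v1}(iv) applies; and for $n=2$, the tetrahedral four-body problem, $v_{2}>0$ follows from the analysis of the triple-collision flow in~\cite{vidal1999}. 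In every case $v_{1}<0$, $v_{2}>0$, $v_{3}<0$.

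With these sign conditions the conclusions are immediate: Theorems~\ref{thm_schubart} and~\ref{thm_schubart2} give the $\mathcal{B}$-family Schubart-like orbits for every $k\in\mathbf{N}$, and Theorem~\ref{thm_type1}(i) gives the $\mathcal{Z}1$-family brake orbits. If in addition the non-degeneracy hypothesis $v_{2}\neq -v_{3}$ holds, then Theorem~\ref{thm_type1}(ii), Theorem~\ref{thm_schubart3}, and Theorem~\ref{thm_schubart4} yield respectively the $\mathcal{Z}5$-family, the less-symmetric $\mathcal{B}$-family, and the $\mathcal{ZB}$-family for every pair of positive integers $(i,j)$.

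The main obstacle is not the final bookkeeping but the sign of $v_{2}$: the clean comparison argument of Lemma~\ref{lemma_v1}(ii)--(iii) only covers $n\ge 10$, so the low-$n$ range has to be handled separately --- by an external triple-collision result for $n=2$ and by case-by-case numerical integration of~(\ref{secondlowerbound}) for $3\le n\le 9$ (rigorous modulo the reliability of the numerics, or pending an interval-arithmetic certification). The second, genuinely unresolved, point is that $v_{2}\neq -v_{3}$ --- equivalently that $\gamma$ is not a heteroclinic connection from $L_{-}$ to $T_{1}(L_{+})$ --- is not established at all, which is exactly why the last three families appear only under that explicit hypothesis.
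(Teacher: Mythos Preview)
Your proposal is correct and matches the paper's approach essentially line for line: the theorem is stated as a summary after the three lemmas of this subsection, and the paper's implicit proof is exactly the assembly you describe --- verify A.1--A.3, read off (N1), (N2), and (N3)/(N3') from the lemmas (with the same three-way split on $n$ for $v_{2}>0$: \cite{vidal1999} at $n=2$, the tabulated $g_{3}(\pi/2)$ for $3\le n\le 9$, and the $|W'/W|\le 4/5$ bound for $n\ge 10$), and then invoke Theorems~\ref{thm_type1}--\ref{thm_schubart4}. Your closing caveats about the numerics for small $n$ and the unproved hypothesis $v_{2}\neq -v_{3}$ also accurately reflect the paper's own stance.
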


\subsection{The planar double-polygon problem}

The configurations of the planar double-polygon problem consist of two regular $n-$gons centered both at the origin and different by a rotation of angel $\frac{2\pi}{2n}$. We denote the distance between any vertex on the two polygons to the origin by $q_{1}$ and $q_{2}$ respectively. See table~\ref{summary}.

The Lagrangian of this system is 
\begin{equation}
L(q_{1},q_{2},\dot{q_{1}},\dot{q_{2}})=\frac{S_{n}}{4}(\frac{1}{q_{1}}+\frac{1}{q_{2}})+  \sum_{k=1}^{n}\frac{1}{r_{k}}+\frac{1}{2}(\dot{q_{1}}^2+\dot{q_{2}}^2),
\end{equation}
where $r_{k}^2=q_{1}^2+q_{2}^2-2q_{1}q_{2}\cos l_{2k-1}$.

Note that $q_{1}$ and $q_{2}$ are non-negative. So we define the size variable by $r^2=q_{1}^2+q_{2}^2$ and the shape variable $\theta\in (-\infty,\infty)$ by 
\begin{equation*}
q_{1}=r\frac{\sqrt{\cos^2\theta+1}-\sin\theta}{2}, \ \ q_{2}=r\frac{\sqrt{\cos^2\theta+1}+\sin\theta}{2}.
\end{equation*}

Then 
\begin{equation*}
L(r,\dot{r},\theta,\dot{\theta})=\frac{1}{2}\dot{r}^2+\frac{1}{2}r^2\dot{\theta}^2\frac{\cos^2\theta}{1+\cos^2\theta}+\frac{1}{r}V(\theta),
\end{equation*}
where $V(\theta)=\frac{S_{n}}{2}\frac{\sqrt{\cos^2\theta+1}}{\cos^2\theta}+\sum_{k=1}^{n}\frac{1}{\sqrt{1-\cos^2\theta\cos l_{2k}}}.$

In Devaney's coordinates, clearly $\phi_{a}=0, \phi_{b}=\pi/2$, and Mart\'{i}nez~\cite{martinez2012} has shown that $V(\phi)$ has a unique critical point at $\phi=0$ if $n=2$. Moreover, if $n\geq 3$, then $V(\phi)$ has three non-degenerate critical points $\phi_{L}<\phi_{m}<\phi_{R}$, $\phi_{m}=\pi/4$, and $\phi_{R}\in (\pi/4,\arctan(2))$.

To prove the existence of Shubart-like orbits, Mart\'{i}nez has successfully verified the conditions (M1,M2); however, the condition (M3) fails. Nonetheless, the conditions (M1,M2) imply our conditions (N1,N2), which are sufficient to guarantee the existence of Schubart-like periodic orbits.

To furthermore prove the existence of other periodic orbits, we need to show that $v_{2}>0$. However, for $3\leq n\leq 20$, we numerically study the two important branches $\gamma$, $\gamma'$ as shown in figure~\ref{double_planar}, and find that $v_{2}<0$ and $v_{3}<0$. So our shooting arguments do not work here. Nonetheless, the figure suggests that $\gamma$ and $\gamma'$ can reach $\theta=\pi/2$ and that their intersections with $\theta=\pi/2$ have $v_{4}>0$ and $v_{5}<0$ respectively. This suggests the existence of another type of periodic orbits, namely type 2 periodic brake orbits, by Theorem 5.5 of~\cite{chen2013}. At this point we are unable to prove that $v_{4}>0, v_{5}<0$ rigorously; we leave it for further investigation.

\begin{figure}[ht]
\centering
\subfloat{{\includegraphics[width=0.5\textwidth]{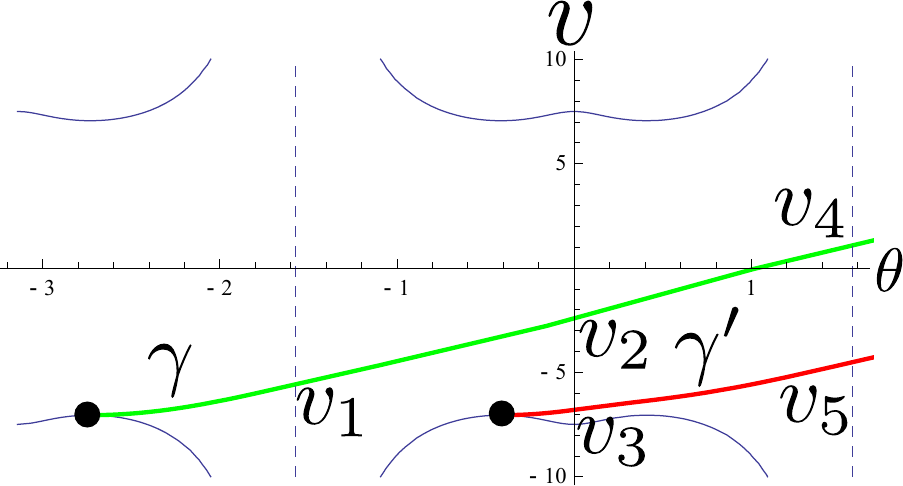}}}
\caption{The branches $\gamma,\gamma'$ for the planar double-polygon problem with $n=10$.}
\label{double_planar}
\end{figure} 
 
 Finally, we formally state the conclusion.
\begin{theorem}
In the equal-mass planar double-polygon problem, when $n\geq 3$, there exists a Shubart-like orbit in $\mathcal{B}-$family with $n=0$.
\end{theorem}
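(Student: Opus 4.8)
\textbf{The plan} is to obtain this orbit as an instance of Theorem~\ref{thm_schubart}, which produces a $\mathcal{B}$-family Schubart-like orbit with $k=0$ as soon as $v_{1}<0$ and $v_{3}<0$. So the whole proof reduces to verifying these two inequalities for the equal-mass planar double-polygon with $n\geq 3$. (Mart\'{i}nez already constructed this orbit, but her argument required all three of (M1), (M2), (M3), and (M3) fails here; the point is that (M1) and (M2), both verified by her for this problem, suffice once one invokes the looser estimates of Lemma~\ref{lemma_v1}(i) and Lemma~\ref{lemma_v3}.) All estimates are carried out in Devaney's coordinates, where for $n\geq 3$ the potential satisfies A.1--A.3 with $\phi_{a}=0$, $\phi_{b}=\pi/2$, and three critical points $\phi_{L}<\phi_{m}=\pi/4<\phi_{R}$ with $\phi_{R}\in(\pi/4,\arctan 2)$, by Mart\'{i}nez's computations; and the new coordinates are available with $c(\theta)=1+\cos^{2}\theta$, so Theorem~\ref{thm_schubart} applies once its two hypotheses are met.

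\textbf{For $v_{1}<0$} I would rerun the mechanism of Lemma~\ref{lemma_v1}(i). Passing to the regularizing variable $g=v/\sqrt{W(\phi)}$, the unstable branch $\gamma''$ through the Lagrange point at $\phi_{R}$ (equivalently, the $T_{1}$-translate of the new-coordinate branch $\gamma$, so that $v_{1}$ is its $v$-coordinate at $\phi_{b}$) leaves that point with $g(\phi_{R})=-\sqrt{2/f(\phi_{R})}<0$ and, while it stays in $\{w\geq 0\}$, satisfies $\tfrac{dg}{d\phi}=\tfrac12\sqrt{2/f(\phi)-g^{2}}-\tfrac{g}{2}\tfrac{W'(\phi)}{W(\phi)}$ on $[\phi_{R},\phi_{b})$. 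Condition (N1), i.e.\ $W'(\phi)\leq 0$ on $[\phi_{m},\phi_{b})$, coincides with Mart\'{i}nez's (M1) (compare the remark after the list of conditions and her Proposition~1), so along $\gamma''$ the last term is $\leq 0$; hence $\gamma''$ stays below the separatrix of the $W$-free equation $g'=\tfrac12\sqrt{2/f(\phi)-g^{2}}$, which reaches $g=0$ exactly at the $b$-collision $\phi_{b}=\pi/2$, and therefore $\gamma''$ itself reaches $\phi_{b}$ with $g<0$, i.e.\ $v_{1}<0$. The delicate point — and the one real departure from the $n$-pyramidal and spatial double-polygon cases — is that $\phi_{a}=0$, $\phi_{b}=\pi/2$ force $f(\phi)=\sin\phi\cos\phi$ rather than $\cos\phi$, so the $W$-free equation no longer admits the closed-form solution $-\sqrt{2\cos\phi}$; one must instead produce an explicit, or numerically integrated, sub-solution on $[\phi_{R},\pi/2)$ landing at $g=0$ on $\phi_{b}$ (in the spirit of the comparison function $g_{3}$ of (\ref{secondlowerbound}) used in the other two applications). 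The structural ingredients, $W'\leq 0$ and the boundary inequality at $\phi_{R}$, are unaffected, so the conclusion persists.

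\textbf{For $v_{3}<0$} I would apply Lemma~\ref{lemma_v3}, whose hypothesis is condition (N2), $V(\phi_{R})>\sin^{2}\!\big(\tfrac{\phi_{R}-\phi_{m}}{2}\big)V(\phi_{m})$. Its proof uses only the normalization-free identity $\tfrac{dv}{d\phi}=\tfrac12\sqrt{2V(\phi)-v^{2}}$ along the relevant Lagrange branch on the collision manifold, so it transfers verbatim; it remains to check (N2). Since $\phi_{m}=\pi/4$ and $\phi_{R}<\arctan 2$, we have $\phi_{R}-\phi_{m}<\arctan 2-\pi/4<\pi/4$, hence $\sin^{2}\!\big(\tfrac{\phi_{R}-\phi_{m}}{2}\big)<\sin^{2}\tfrac{\pi}{8}<\tfrac{2}{3}$, so Mart\'{i}nez's verified condition (M2), namely $3V(\phi_{R})-2V(\phi_{m})>0$ (equivalently $V(\phi_{R})>\tfrac{2}{3}V(\phi_{m})$), already implies (N2). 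Therefore $v_{3}<0$.

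\textbf{Conclusion.} With $v_{1}<0$ and $v_{3}<0$ established, Theorem~\ref{thm_schubart} yields the asserted $k=0$ Schubart-like orbit. I expect the main obstacle to be the $v_{1}<0$ step: under the normalization $\phi_{a}=0$, $\phi_{b}=\pi/2$ there is no closed-form comparison curve for $f=\sin\phi\cos\phi$, so one must either rescale $\phi$ suitably or carry out an explicit/numerical sub-solution estimate; everything else is bookkeeping built on Theorem~\ref{thm_schubart}, Lemma~\ref{lemma_v1}(i), Lemma~\ref{lemma_v3}, and Mart\'{i}nez's verification of (M1)--(M2). (The richer families $\mathcal{Z}1$, $\mathcal{Z}5$, less-symmetric $\mathcal{B}$, and $\mathcal{ZB}$ stay out of reach for this problem because the numerics give $v_{2}<0$, which breaks the shooting arguments of Theorems~\ref{thm_type1}--\ref{thm_schubart4}.)
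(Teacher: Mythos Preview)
Your overall strategy is exactly the paper's: invoke Theorem~\ref{thm_schubart} after checking $v_{1}<0$ and $v_{3}<0$, observing that Mart\'{i}nez's already-verified (M1), (M2) are enough and that (M3) is unnecessary. Your $v_{3}<0$ argument via Lemma~\ref{lemma_v3} and the implication (M2)$\Rightarrow$(N2) (using $\phi_{R}-\phi_{m}<\arctan 2-\pi/4<\pi/4$, hence $\sin^{2}\tfrac{\phi_{R}-\phi_{m}}{2}<\tfrac{2}{3}$) is correct and matches the paper.

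The only place you diverge is in the $v_{1}<0$ step, and there you make it harder than necessary. You try to rerun the comparison mechanism of Lemma~\ref{lemma_v1}(i), correctly observe that with $\phi_{a}=0$, $\phi_{b}=\pi/2$ one has $f(\phi)=\sin\phi\cos\phi$ rather than $\cos\phi$, so the closed-form sub-solution $g_{1}=-\sqrt{2\cos\phi}$ is no longer available, and then leave ``produce an explicit or numerically integrated sub-solution landing at $g=0$ on $\phi_{b}$'' as the outstanding obstacle. The paper does not attempt this at all: Lemma~\ref{lemma_v1} is written specifically for the $\phi_{a}=-\pi/2$ normalization (this is why the summary table marks (N1), (N3), (N3$'$) as ``not applicable'' for the planar double-polygon). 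Instead the paper simply cites Mart\'{i}nez: her (M1) is verified for this problem, and her Proposition~1 derives $v_{1}<0$ from (M1) in the general $(\phi_{a},\phi_{b})$ setting, so $v_{1}<0$ is already established in \cite{martinez2012} without any new comparison curve. Replace your $v_{1}<0$ paragraph with that citation and the proof is complete; the sub-solution you flag as the ``main obstacle'' is not needed.
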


\section{Appendix}\label{appendix}
The appendix includes the properties of the series $S_{n}/n$ and the derivation of the differential equation~(\ref{newequation}). The derivation is only presented here for the referees' convenience and will not appear in the published version.

\subsection{The series $S_{n}/n$}

We recall that $S_{n}=\sum_{k=1}^{n-1}\csc\frac{\pi}{n}k$.
The series $S_{n}$ has been carefully analyzed in~\cite{moeckel1995}, where the authors provided an asymptotic expansion of $S_{n}/4$ for $n$ large:
\begin{equation}\label{asymptotic}
\frac{S_{n}}{4}\approx \frac{n}{2\pi}(\gamma+\log\frac{2\pi}{n})-\frac{\pi}{144n}+\frac{7\pi^{3}}{86400n^{3}}-\frac{31\pi^{5}}{7620480n^{5}}:=\tilde{A_{n}},
\end{equation}
where $\gamma\approx 0.5772156649$ is the Euler-Mascheroni constant, and the approximation has a relative error less than $10^{-6}$ for $n\geq 47$. Letting $F(n):=\frac{\tilde{A_{n}}}{n}$, then $F(n)$ is an increasing function. Mart\'{i}nez~\cite{martinez2012} has numerically computed some values of $\frac{S_{n}}{4n}$ and $F(n)$, which provide a strong evidence that $S_{n}/n$ should be an increasing sequence.

In this paper, we have assumed the fact that $\frac{S_{n}}{n}>1.524$ for $n\geq 10$ in~(\ref{estimate}).

\subsection{Derivation of the differential equation~(\ref{newequation})}

We recall that $v=r^{\frac{1}{2}}\dot{r}$, $w=\dot{\theta}r^{3/2}\frac{\cos^2\theta}{c(\theta)}$, $\frac{dt}{ds}=r^{\frac{3}{2}}\cos^{2}\theta$, $\mathcal{W}(\theta):=\cos^2\theta V(\theta)$,
\begin{equation*}
L(r,\dot{r},\theta,\dot{\theta})=\frac{1}{2}{\dot{r}^2}+\frac{1}{2}r^2\dot{\theta}^2\frac{\cos^{2}\theta}{c(\theta)}+\frac{1}{r} V(\theta),
\end{equation*} 
and the energy equation is
\[\frac{1}{2}v^2\cos^{2}\theta+\frac{1}{2}w^2 c(\theta)-\mathcal{W}(\theta)=-r\cos^2\theta.\]

First, $\frac{dr}{ds}=\frac{dr}{dt}\frac{dt}{ds}=r^{-\frac{1}{2}}v(r^{\frac{3}{2}}\cos^{2}\theta)=rv\cos^{2}\theta$.

Second, from the Euler-Lagrange equation $\frac{d}{dt}\frac{\delta L}{\delta \dot{r}}=\frac{\delta L}{\delta r}$, we have $\ddot{r}=r\dot{\theta}^2\frac{\cos^{2}\theta}{c(\theta)}-\frac{V(\theta)}{r^2}$. Therefore, 
\begin{align*}
\frac{dv}{ds}=&\dot{v}r^{\frac{3}{2}}\cos^{2}\theta=(\frac{1}{2}r^{-\frac{1}{2}}\dot{r}^2+r^{\frac{1}{2}}\ddot{r})r^{\frac{3}{2}}\cos^{2}\theta \\
    =&\frac{1}{2}v^2\cos^{2}\theta+w^2 c(\theta)-\mathcal{W}(\theta).
\end{align*}

Third, $\frac{d\theta}{ds}=wc(\theta)$ is straightforward from the definition of $w$.

Finally, from the Euler-Lagrange equation $\frac{d}{dt}\frac{\delta L}{\delta \dot{\theta}}=\frac{\delta L}{\delta\theta}$, we have $\frac{d}{dt}(r^{\frac{1}{2}}w)=\frac{\delta L}{\delta\theta}$, and hence
\[r^{\frac{1}{2}}\dot{w}=-\frac{1}{2}r^{-\frac{1}{2}}\dot{r}w+\frac{1}{2}r^{2}\dot{\theta}^2(\frac{d}{d\theta}\frac{\cos^{2}\theta}{c(\theta)})+\frac{1}{r}V'(\theta).\]
Therefore,
\begin{align*}
\frac{dw}{ds}&=\dot{w}r^{\frac{3}{2}}\cos^{2}\theta=-\frac{1}{2}vw\cos^{2}\theta+\frac{1}{2}r^{3}\dot{\theta}^2\cos^{2}\theta(\frac{d}{d\theta}\frac{\cos^{2}\theta}{c(\theta)})+\cos^{2}\theta V'(\theta) \\
&=\mathcal{W}'(\theta)-\frac{1}{2}vw\cos^{2}\theta+\frac{\sin\theta}{\cos\theta}[2\mathcal{W}(\theta)+\frac{1}{2}r^{3}\dot{\theta}^2\frac{\cos^{3}\theta}{\sin\theta}(\frac{d}{d\theta}\frac{\cos^{2}\theta}{c(\theta)})]\\
&=\mathcal{W}'(\theta)-\frac{1}{2}vw\cos^{2}\theta+\frac{\sin\theta}{\cos\theta}[2\mathcal{W}(\theta)-w^{2}c(\theta)-\frac{1}{2}w^2\frac{\cos\theta}{\sin\theta}c'(\theta)]\\
&=\mathcal{W}'(\theta)-\frac{1}{2}vw\cos^2\theta+\cos\theta\sin\theta(2r+v^2-\frac{1}{2}w^2\frac{c'(\theta)}{\sin\theta\cos\theta}),
\end{align*}
where we use the energy equation in the last line.

\begin{table}[h!]
\centering
\resizebox{\columnwidth}{!}{%
\begin{tabular}{|m{4.5cm}|m{4.5cm}|m{4.5cm}|}
\hline
\includegraphics[width=4cm]{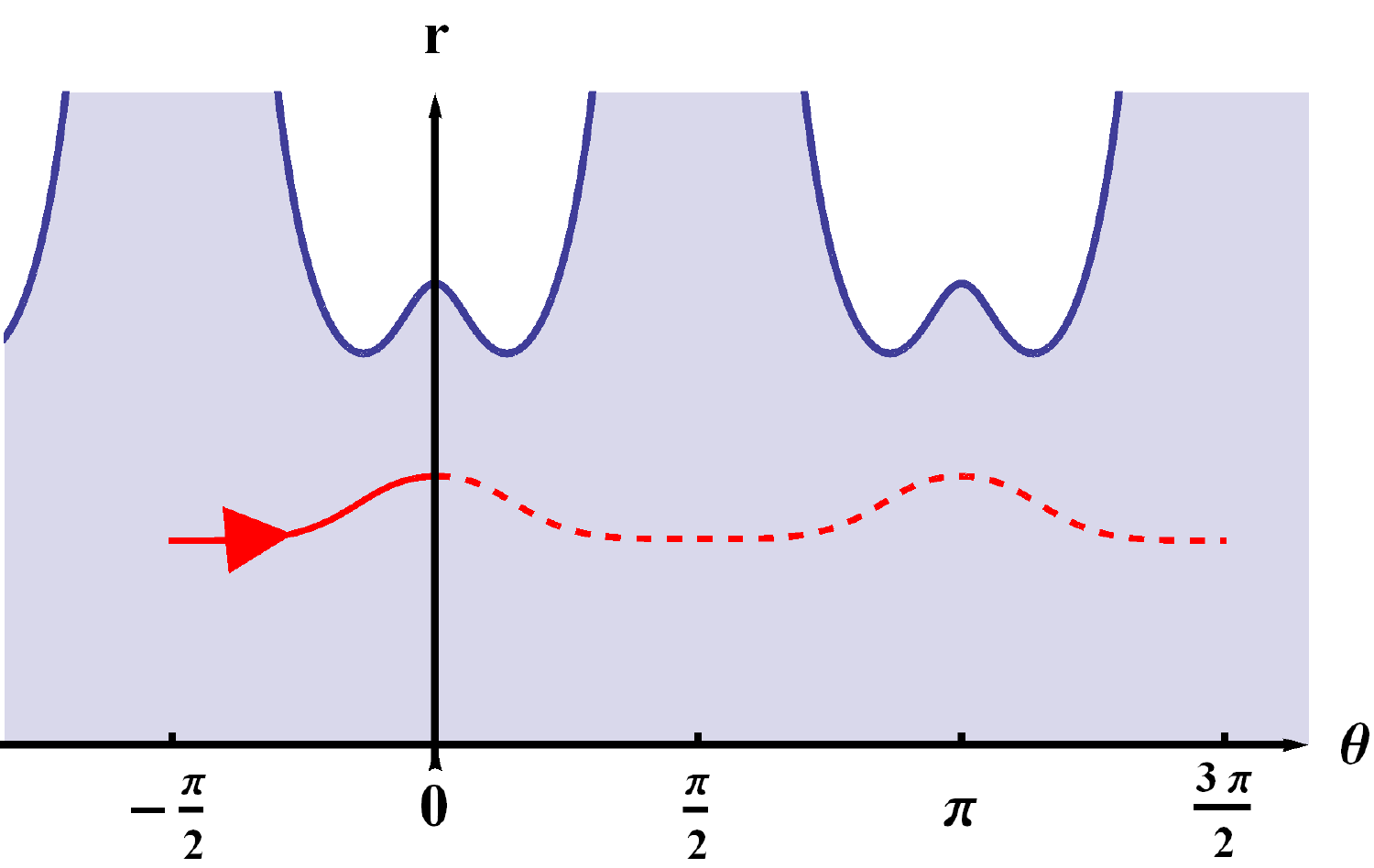} & \includegraphics[width=4cm]{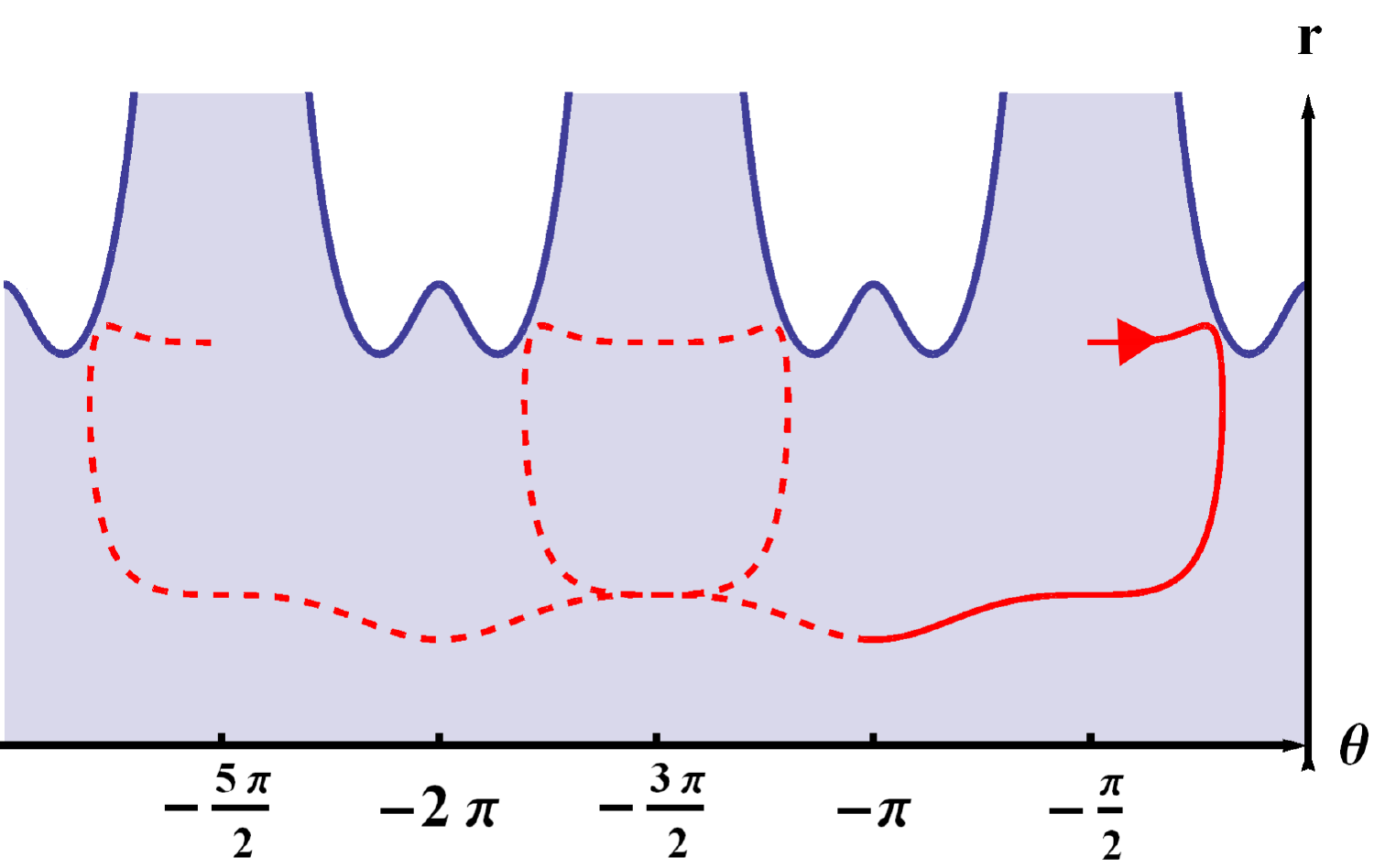} & 
\includegraphics[width=4cm]{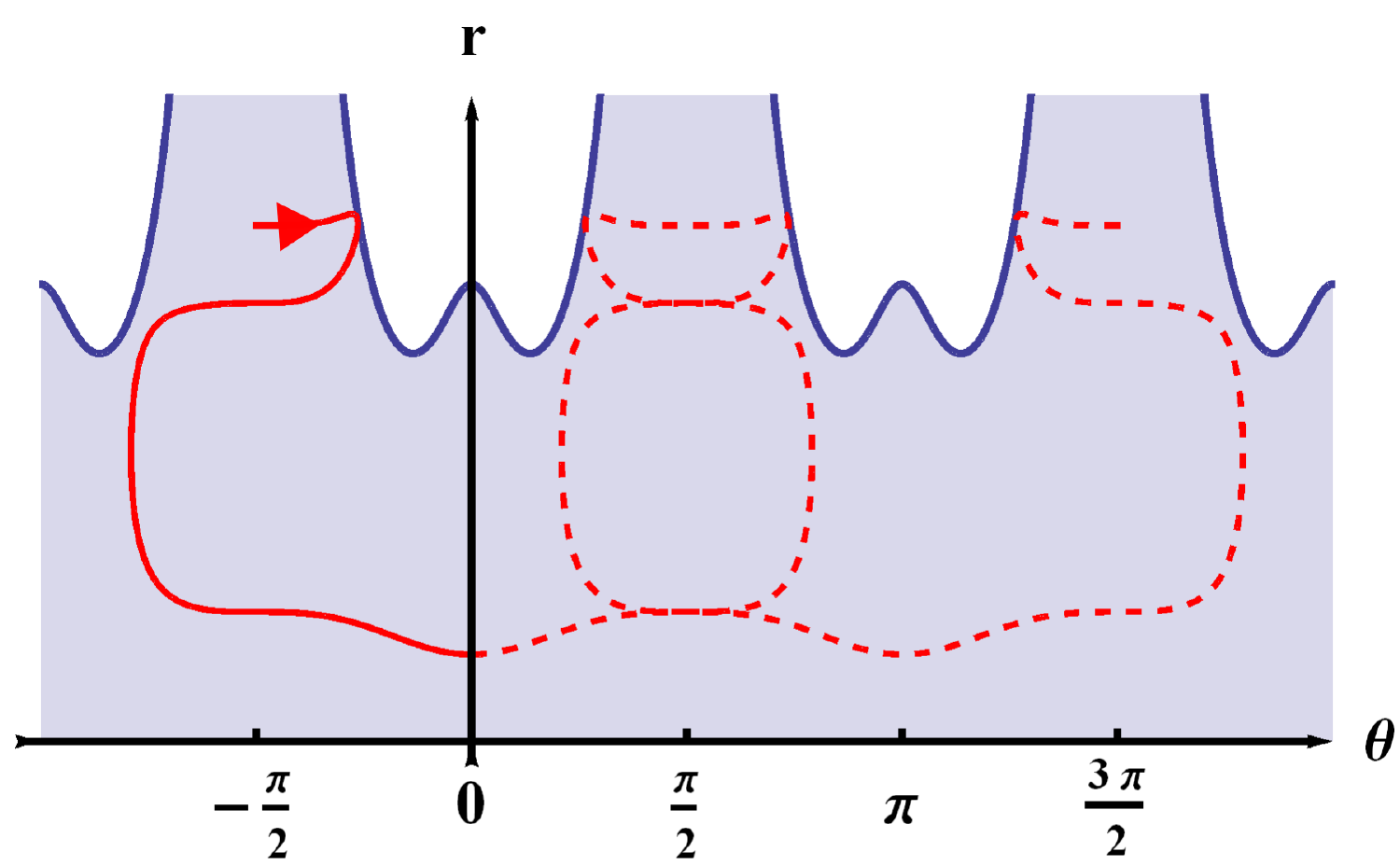} \\

\includegraphics[width=4cm]{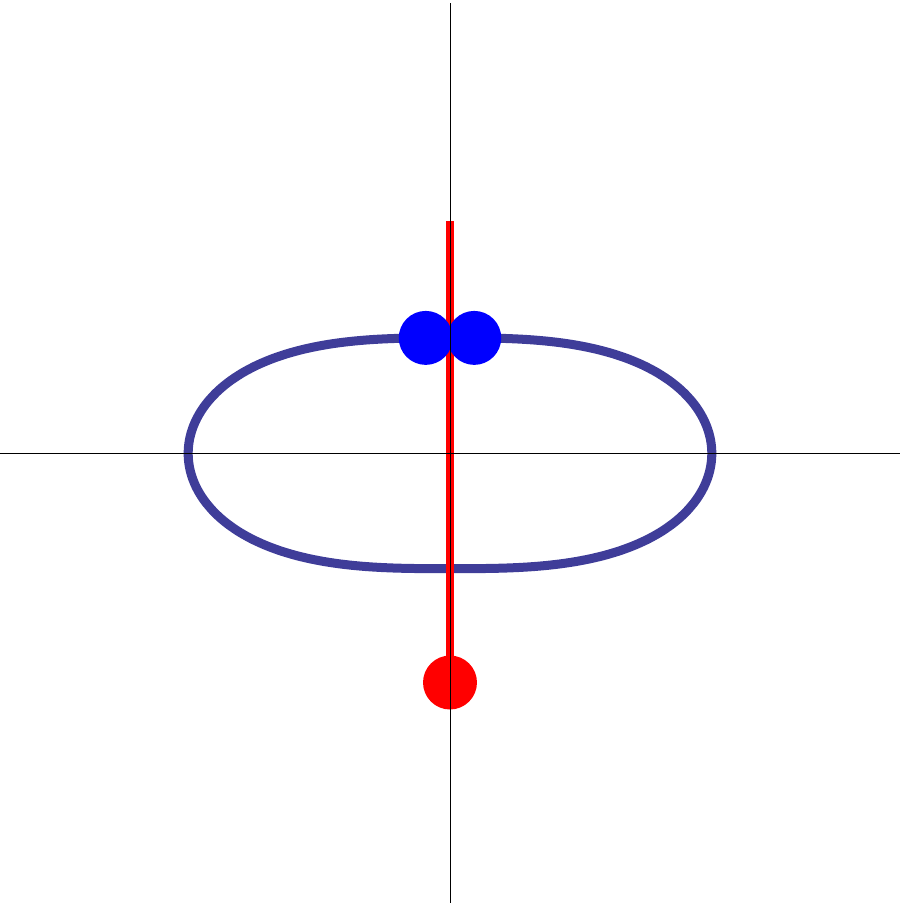} &
\includegraphics[width=4cm]{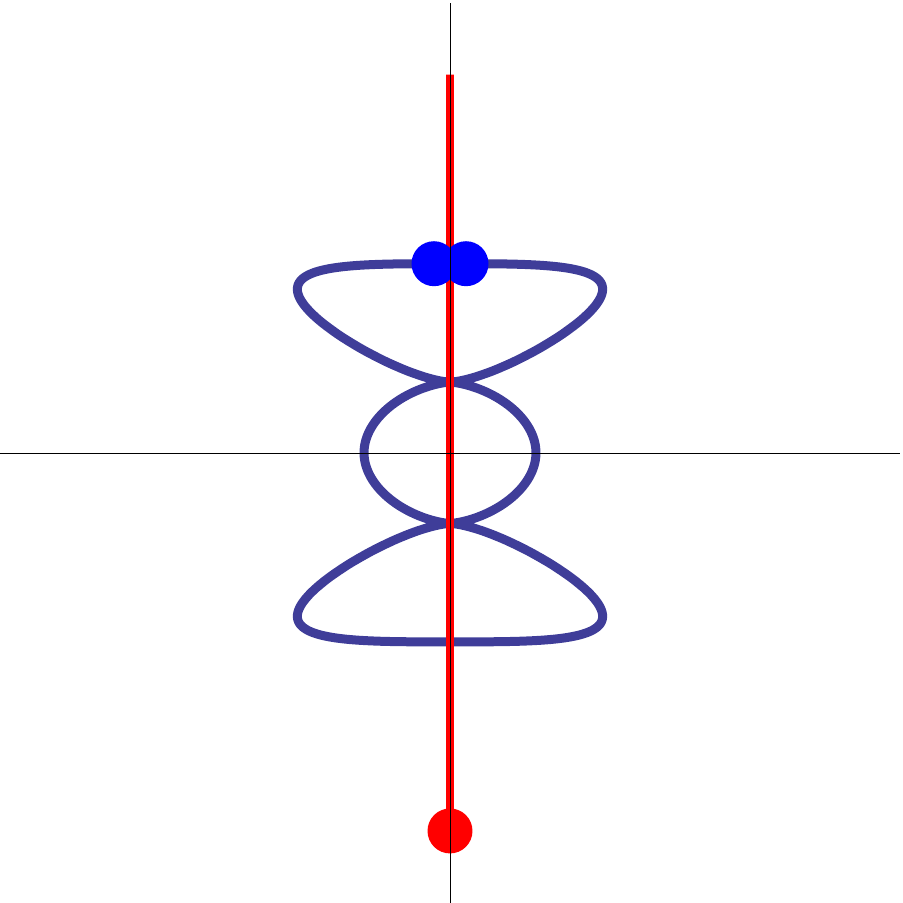}&
\includegraphics[width=4cm]{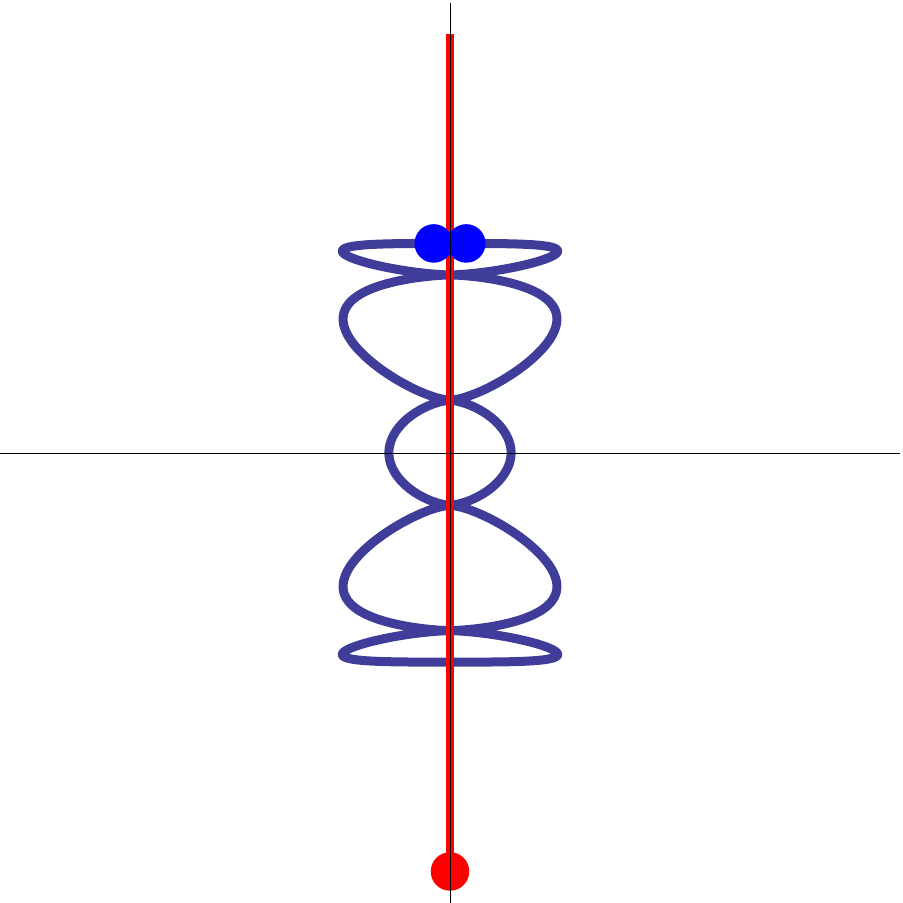}\\

(a) $\mathcal{B}-$family, $k=0$ & (b) $\mathcal{B}-$family, $k=1$ & (c) $\mathcal{B}-$family, $k=2$\\
\hline 
\includegraphics[width=4cm]{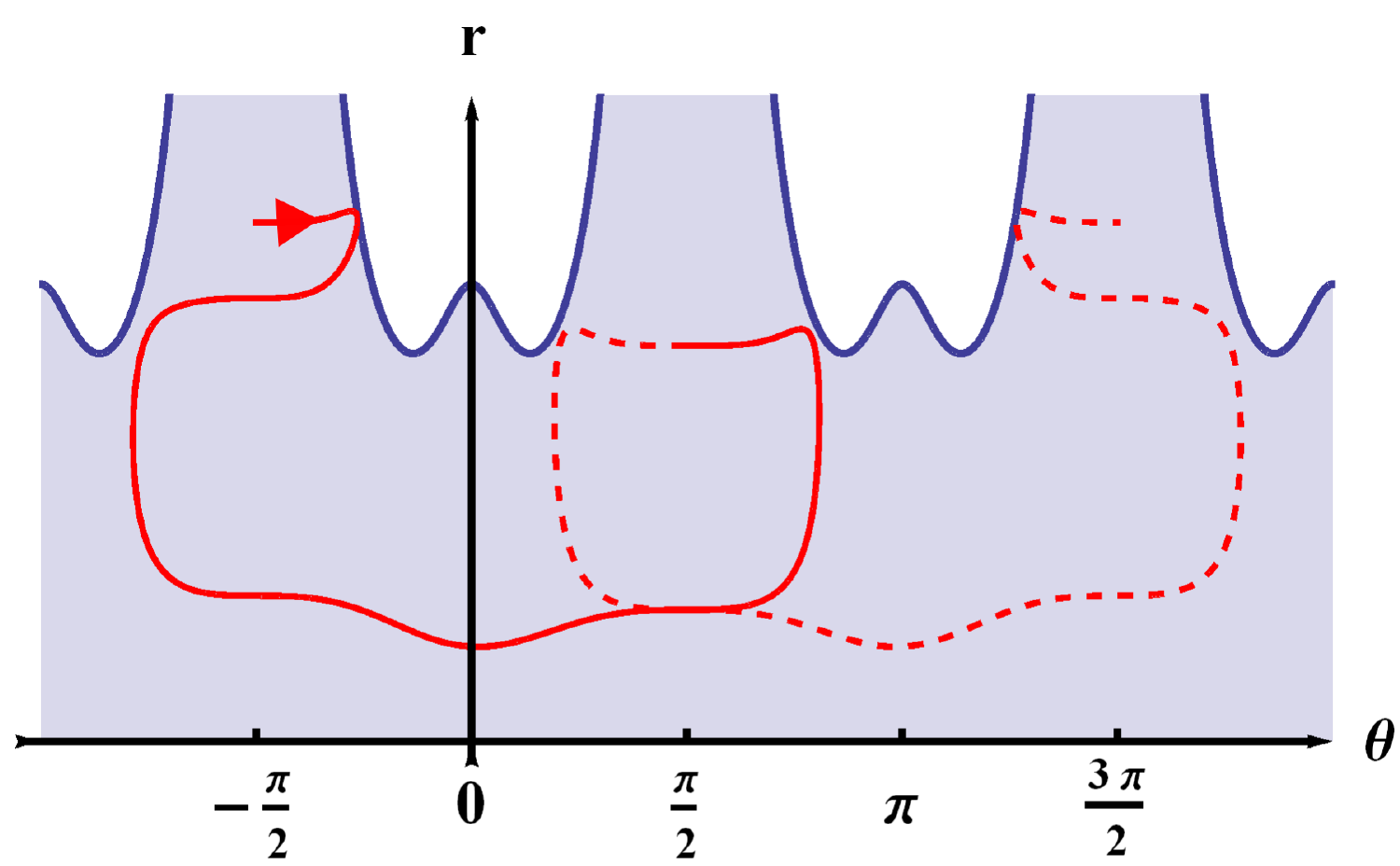} & \includegraphics[width=4cm]{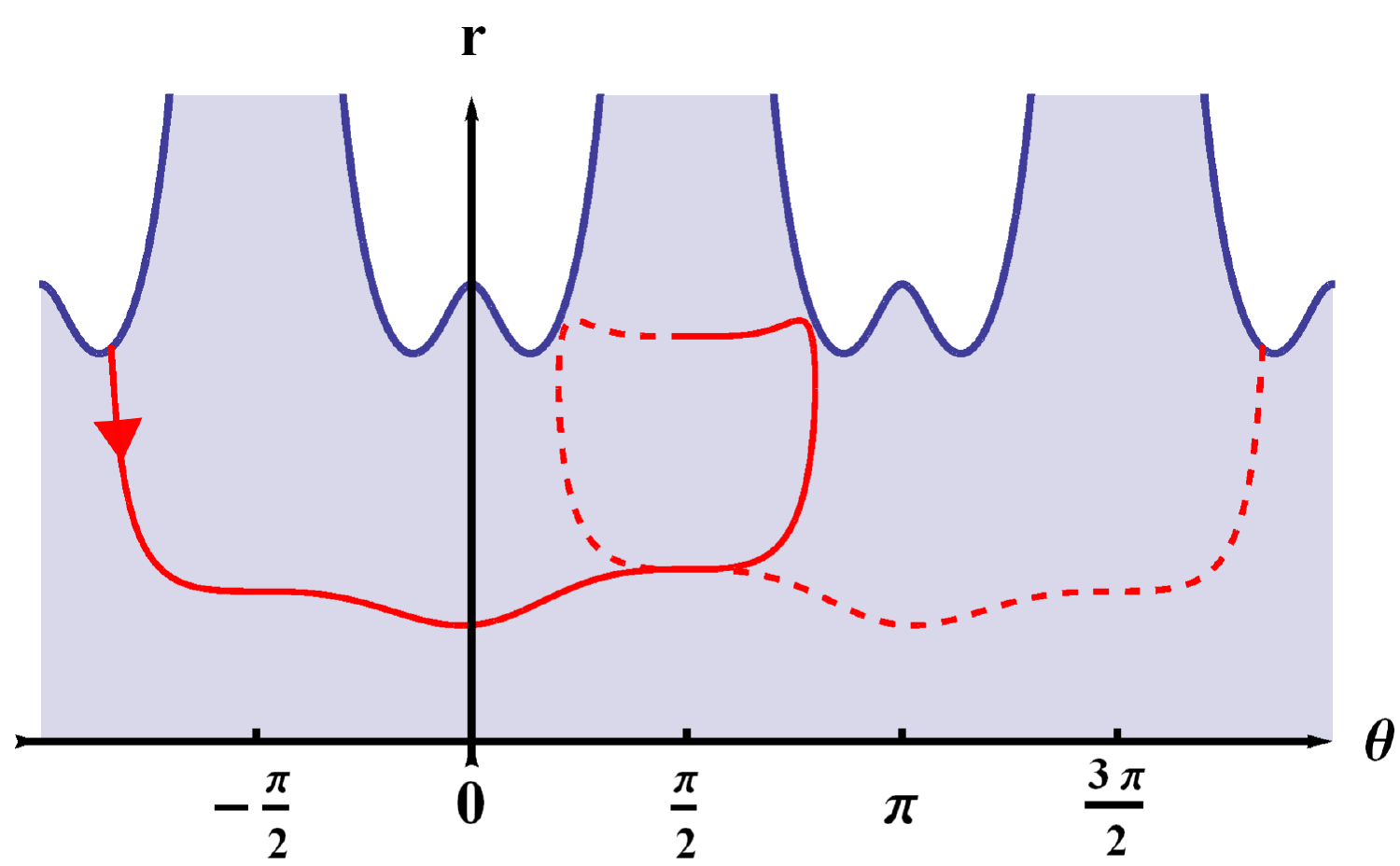} & 
\includegraphics[width=4cm]{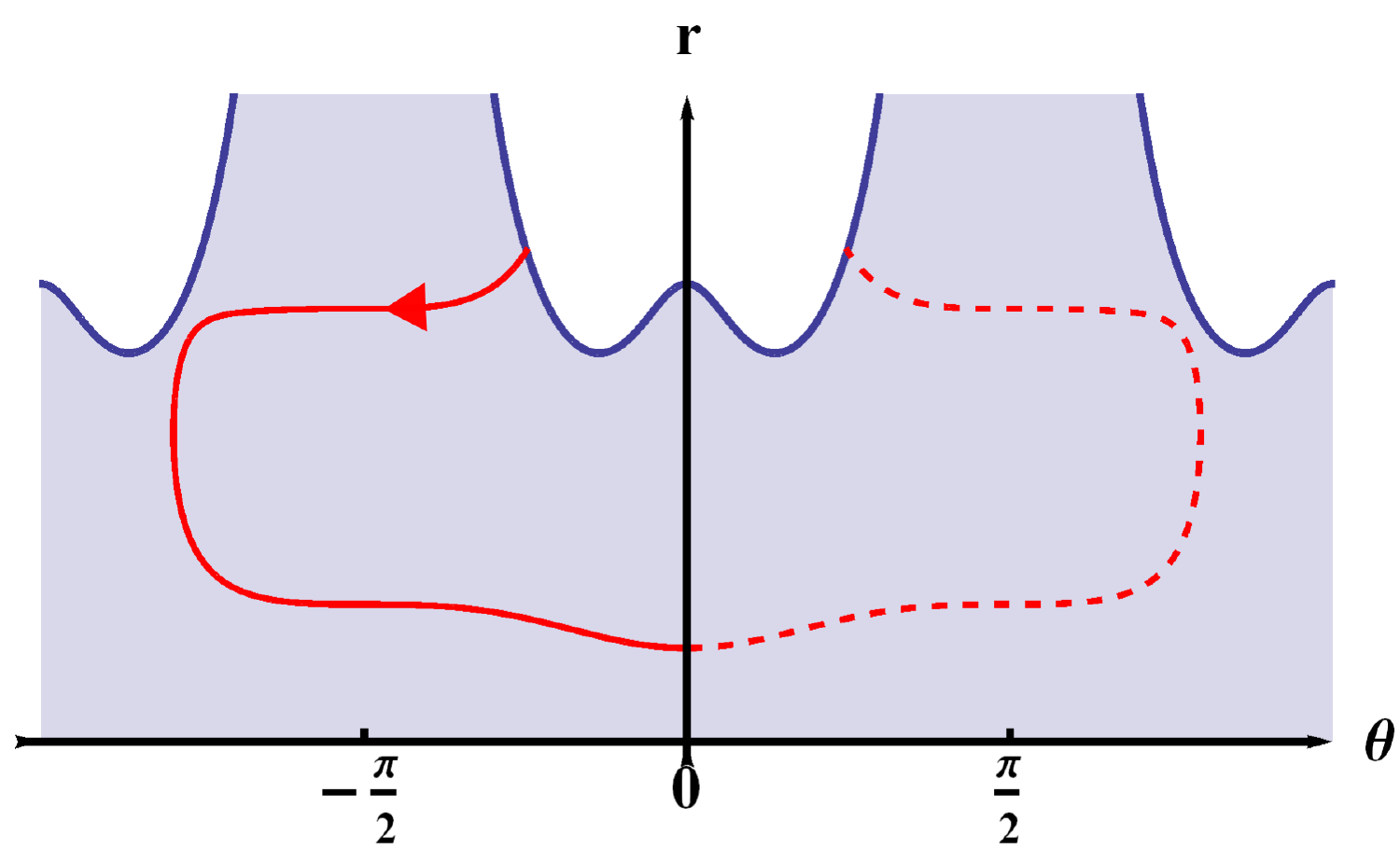} \\

\includegraphics[width=4cm]{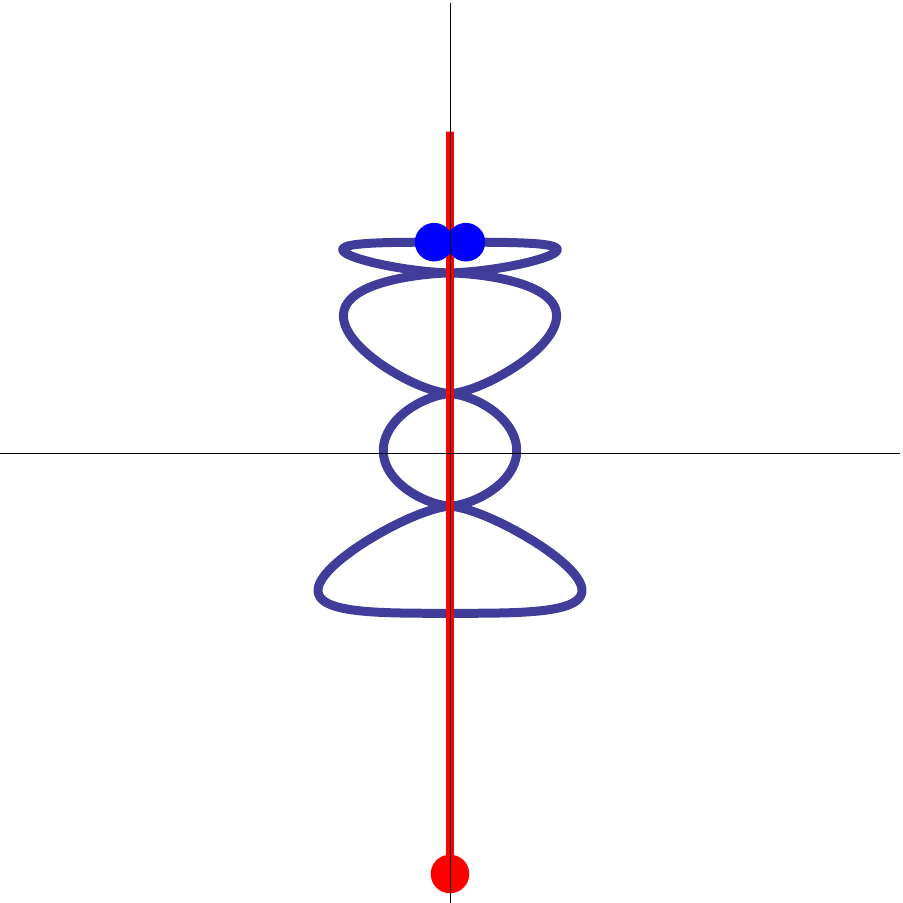} & \includegraphics[width=4cm]{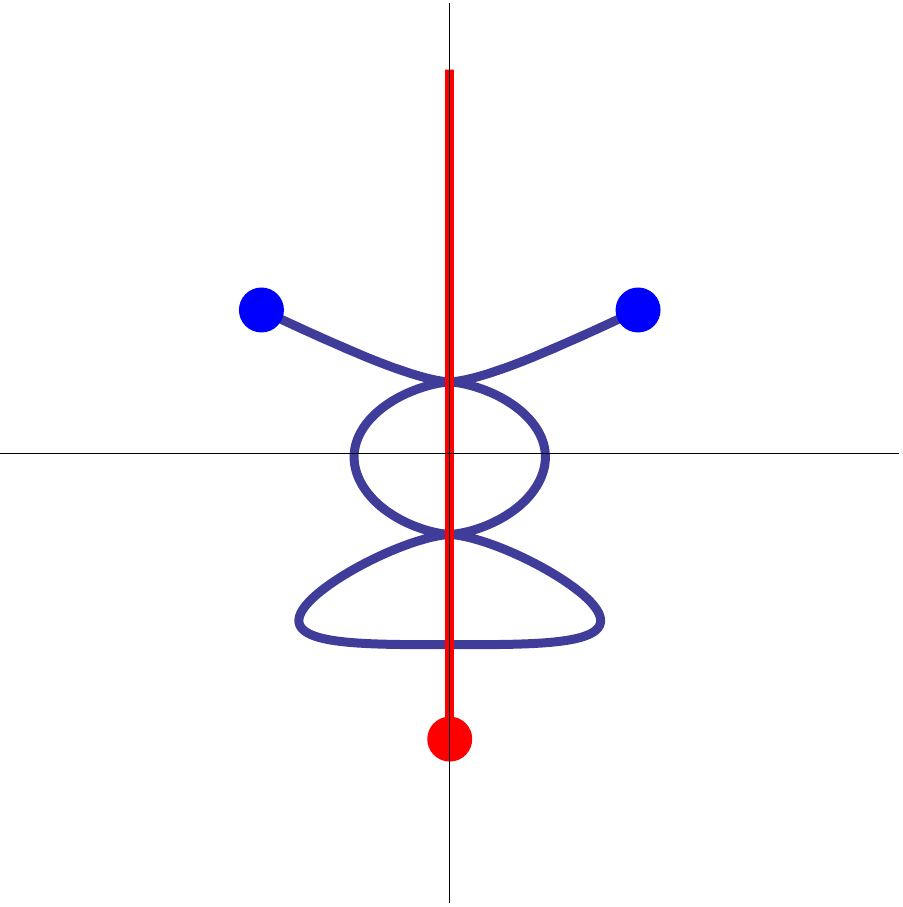} & 
\includegraphics[width=4cm]{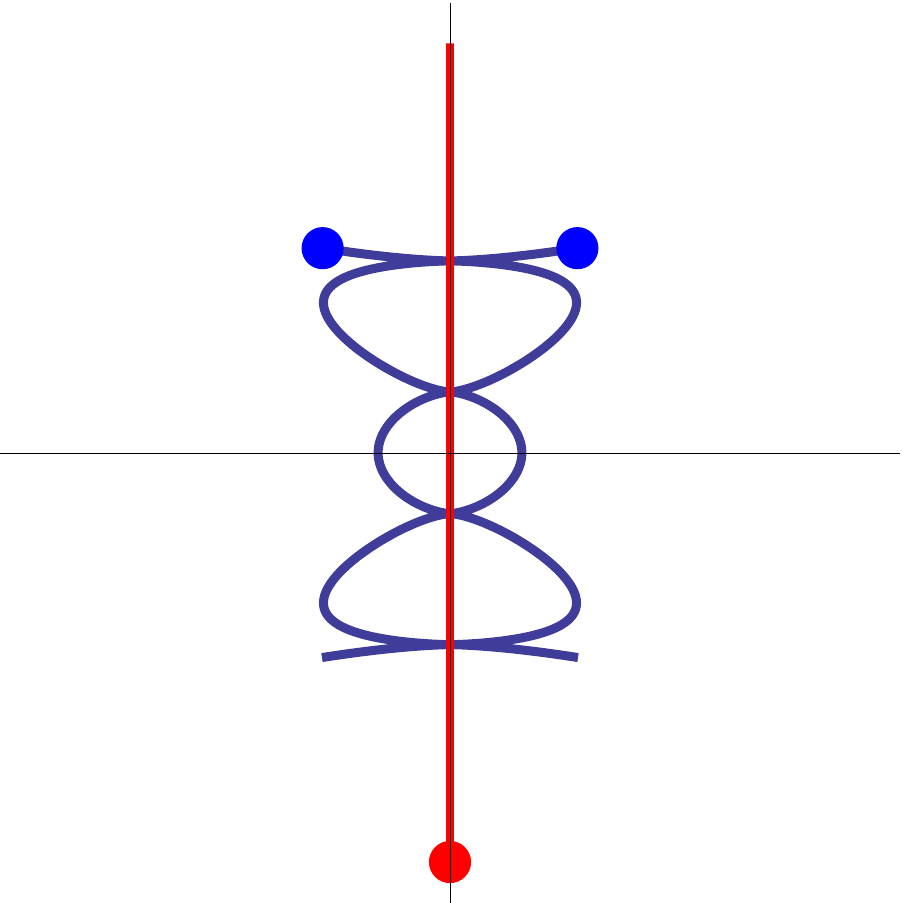} \\

(d) Less-symmetric $\mathcal{B}-$family   \newline \ \ with $i=2,j=1$ & (e) $\mathcal{ZB}-$family \newline \ \  with $i=1,j=1$ & (f) $\mathcal{Z}1-$family with $k=2$\\
\hline

\includegraphics[width=4cm]{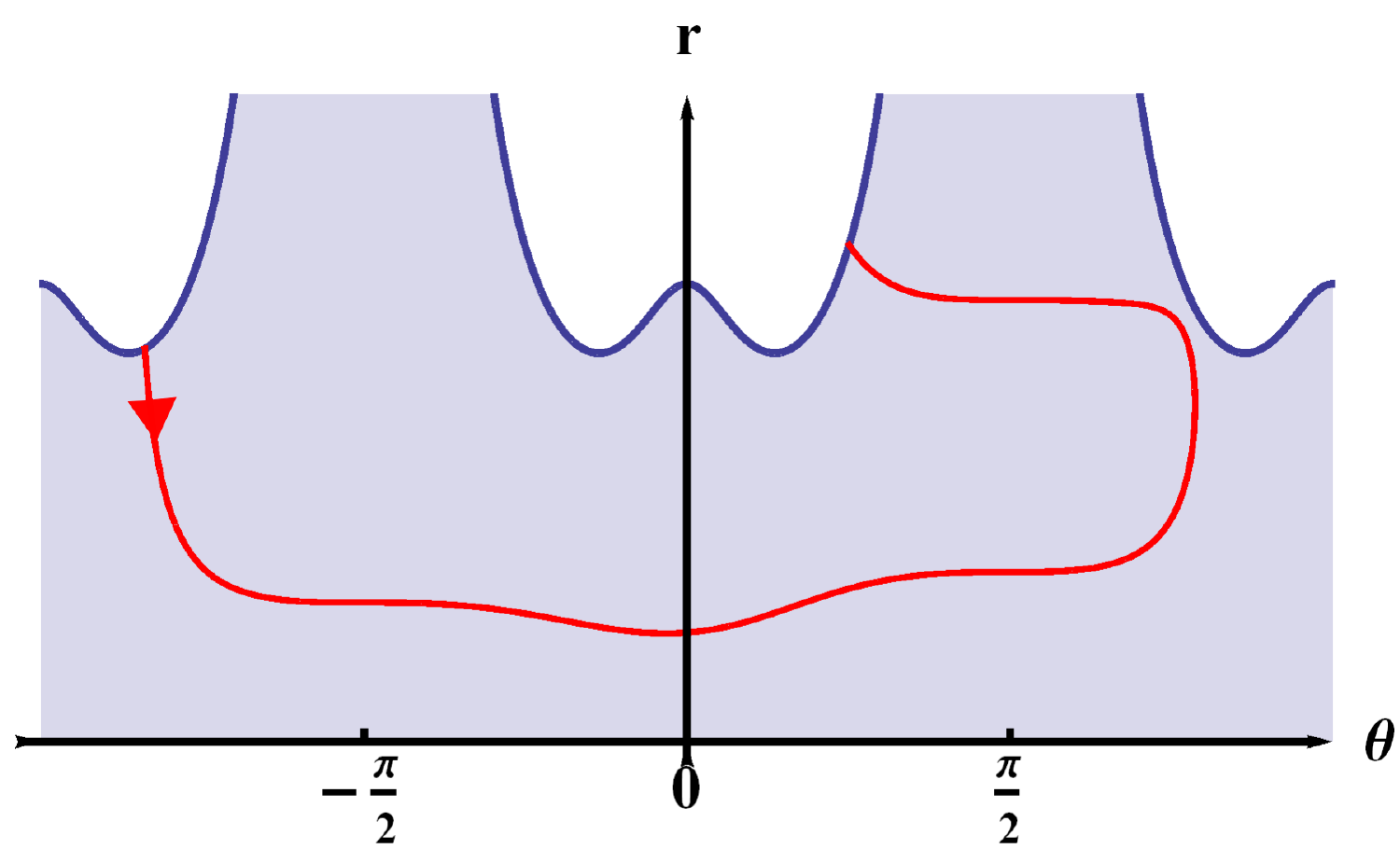} & \includegraphics[width=4cm]{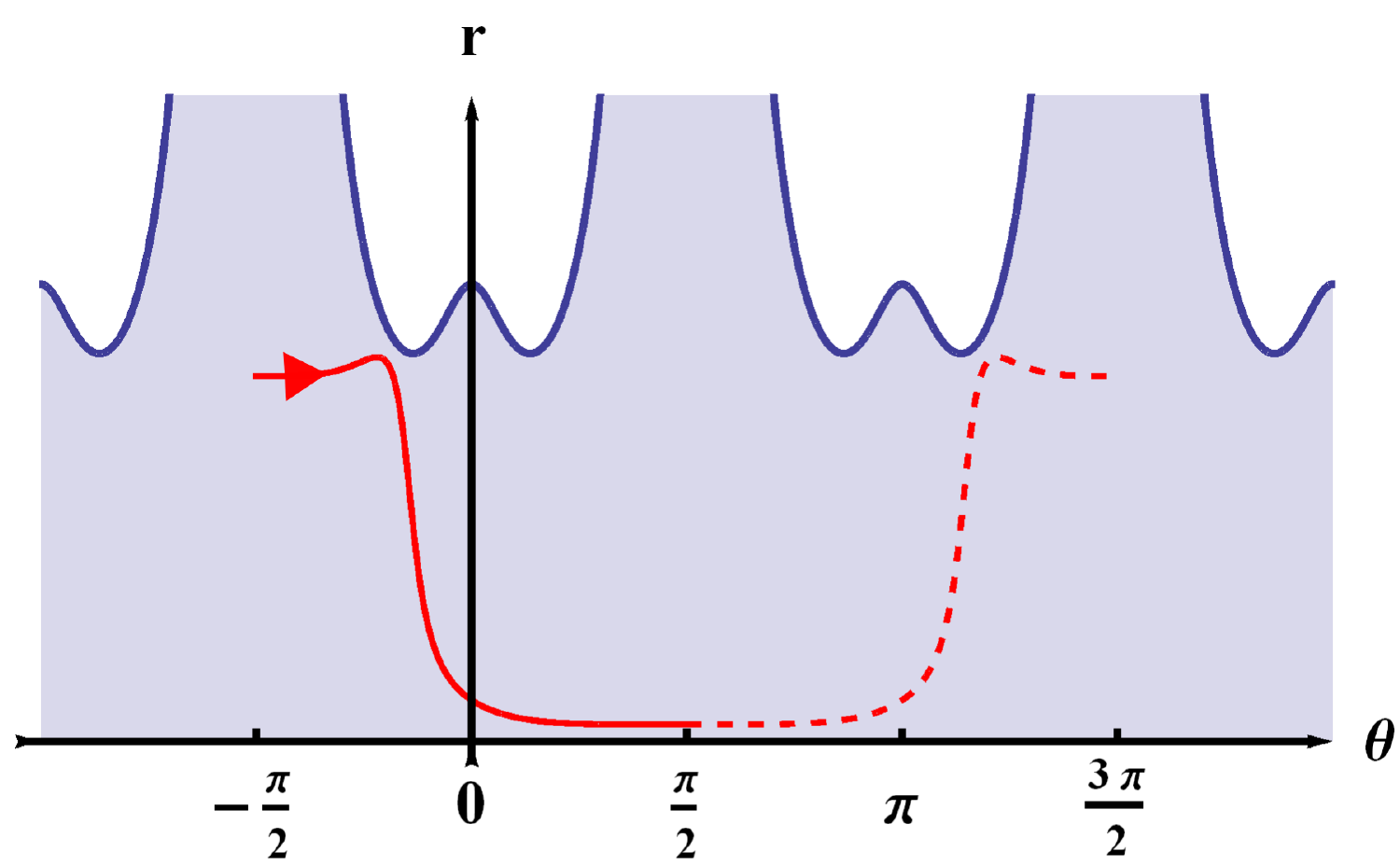} & 
\includegraphics[width=4cm]{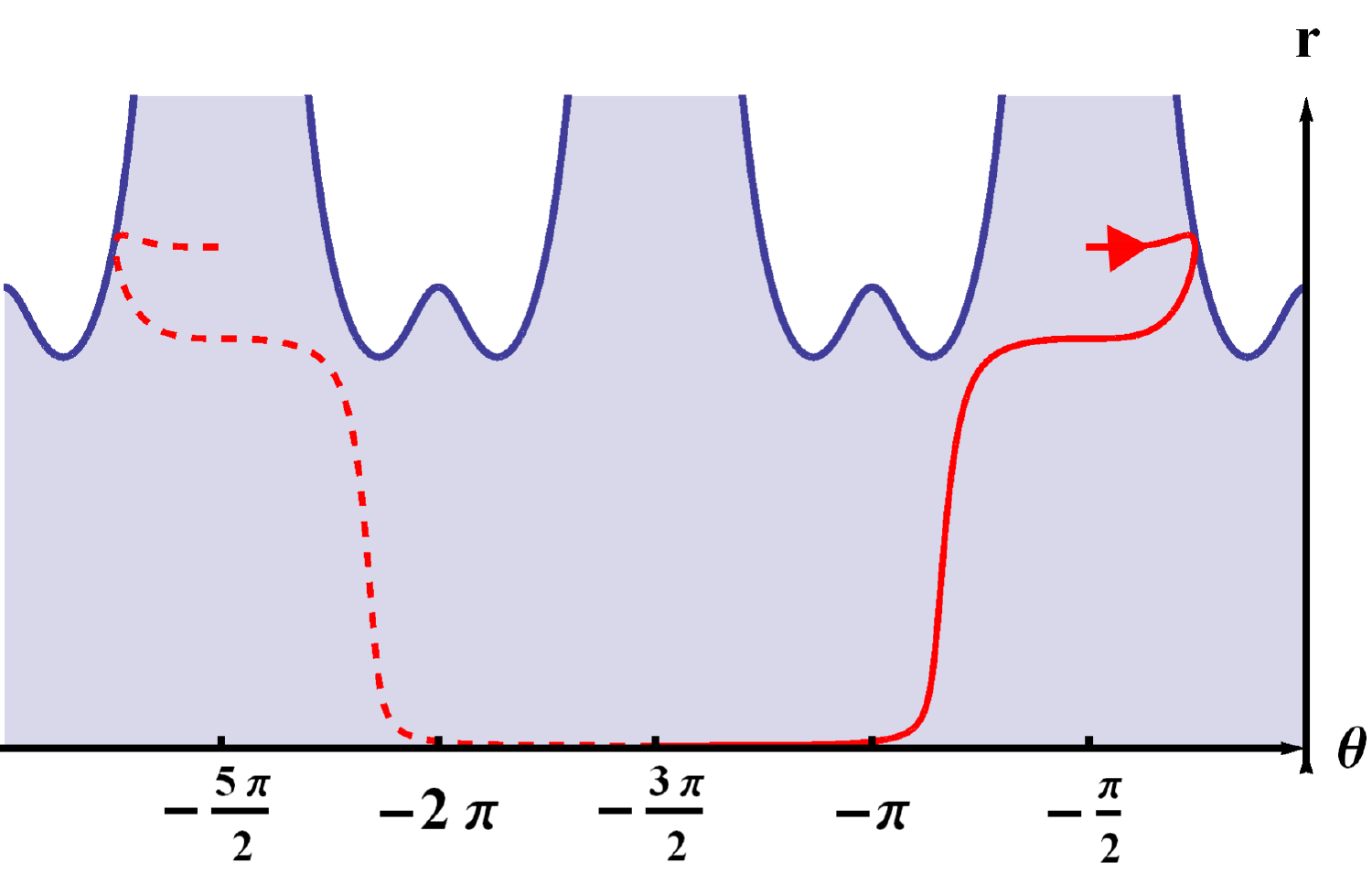} \\

\includegraphics[width=4cm]{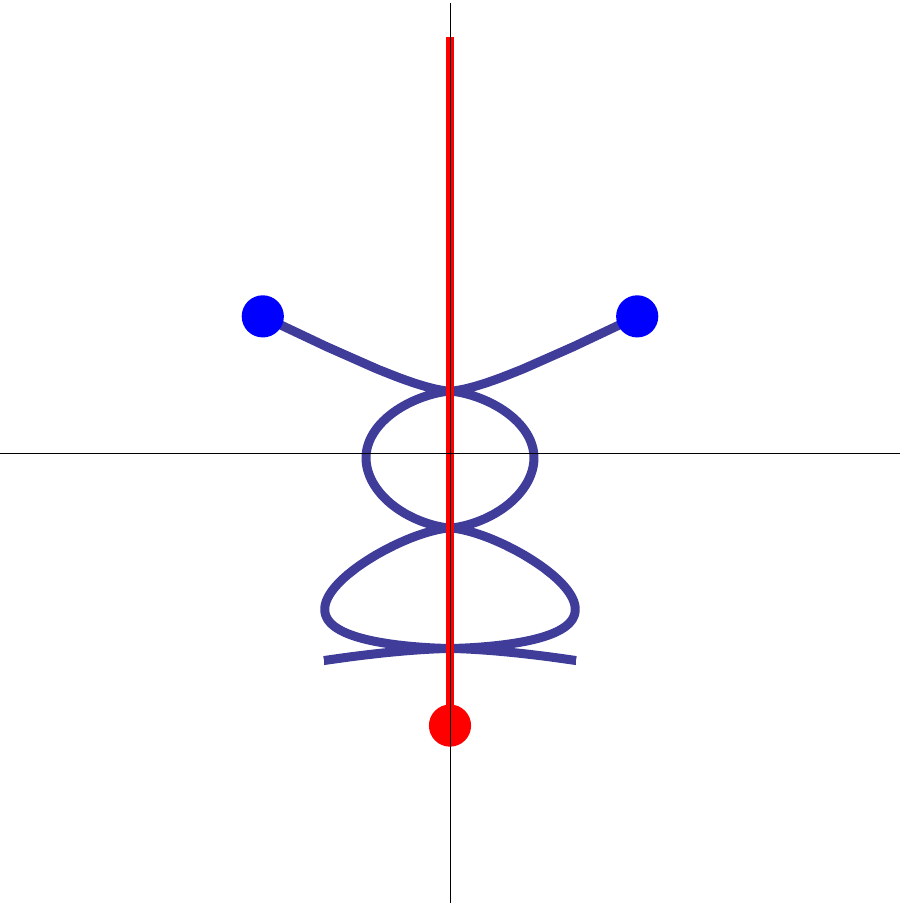} & \includegraphics[width=4cm]{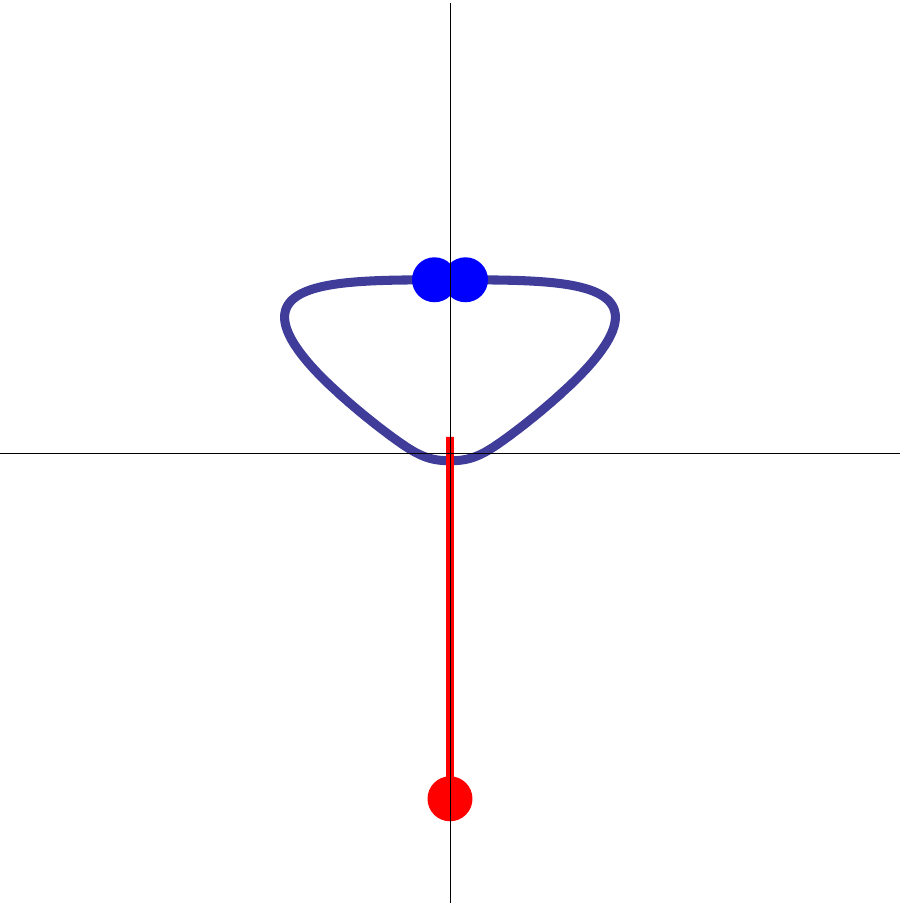} & 
\includegraphics[width=4cm]{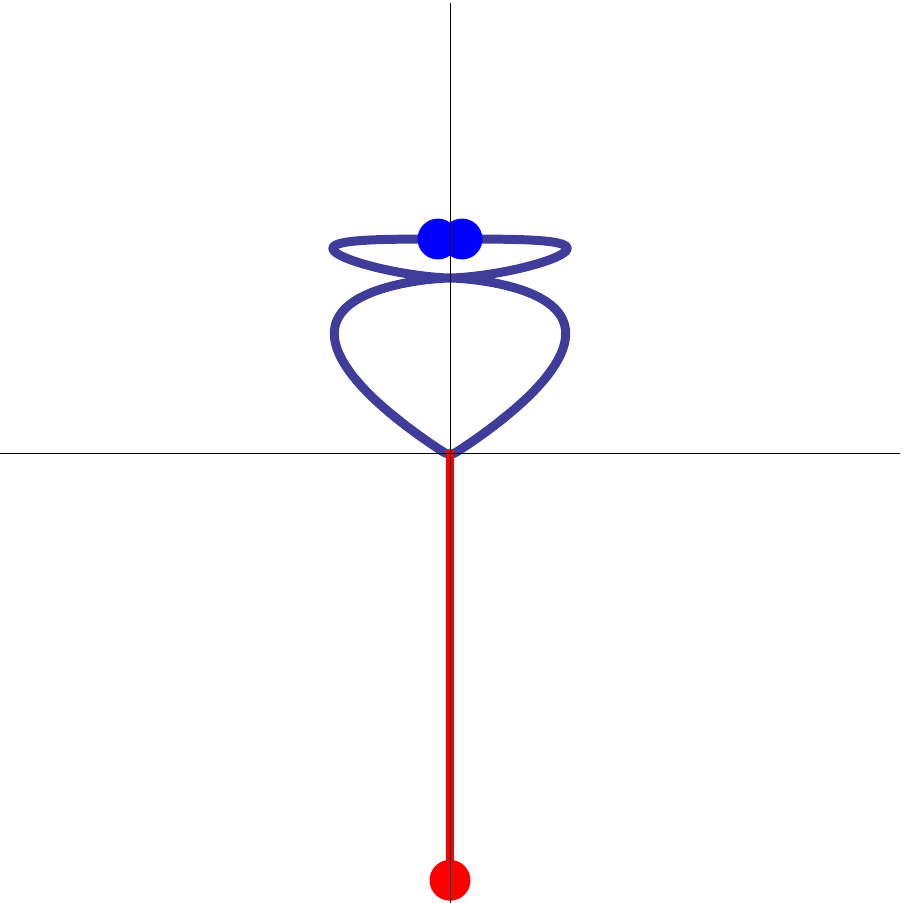} 
\\

(g) $\mathcal{Z}5-$family with \centering $i=1,j=2$&
(h) Unproved orbit &
(i) Unproved orbit\\
\hline
\end{tabular}}
\smallskip
\caption{Periodic orbits and their projection in the $(\theta,r)-$plane. For the projection in the $(\theta,r)-$plane, the fundamental domain of each orbit is plotted in solid curve; one may obtain the full orbit by symmetries.}
\label{orbits_table}
\end{table}

\renewcommand{\arraystretch}{1.5}
\begin{table}[h]
\resizebox{\columnwidth}{!}{%
\centering
 \begin{tabular}{|m{4cm}|m{5cm}|m{5cm}|m{5cm}|}
\hline 

The problem \newline(all equal-mass) & $n$-pyramidal problem & Spatial double-polygon \newline problem & Planar double-polygon \newline problem \\
\hline
Number of masses & $n+1$ & $2n$ & $2n$ \\
\hline
The configuration & {\includegraphics[width=5cm]{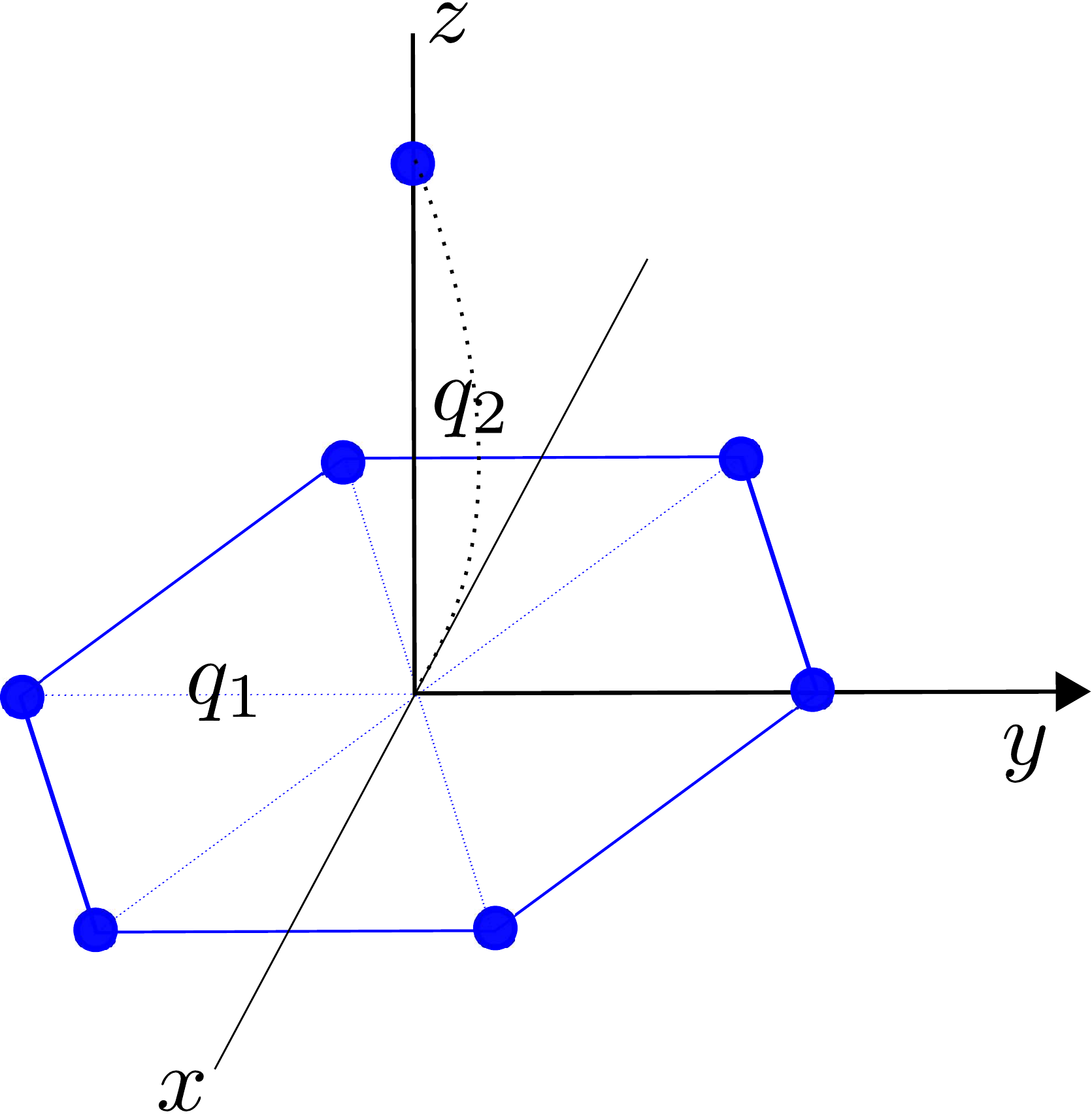}} & \includegraphics[width=5cm]{graph/spatial_gons.pdf} & {\includegraphics[width=5cm]{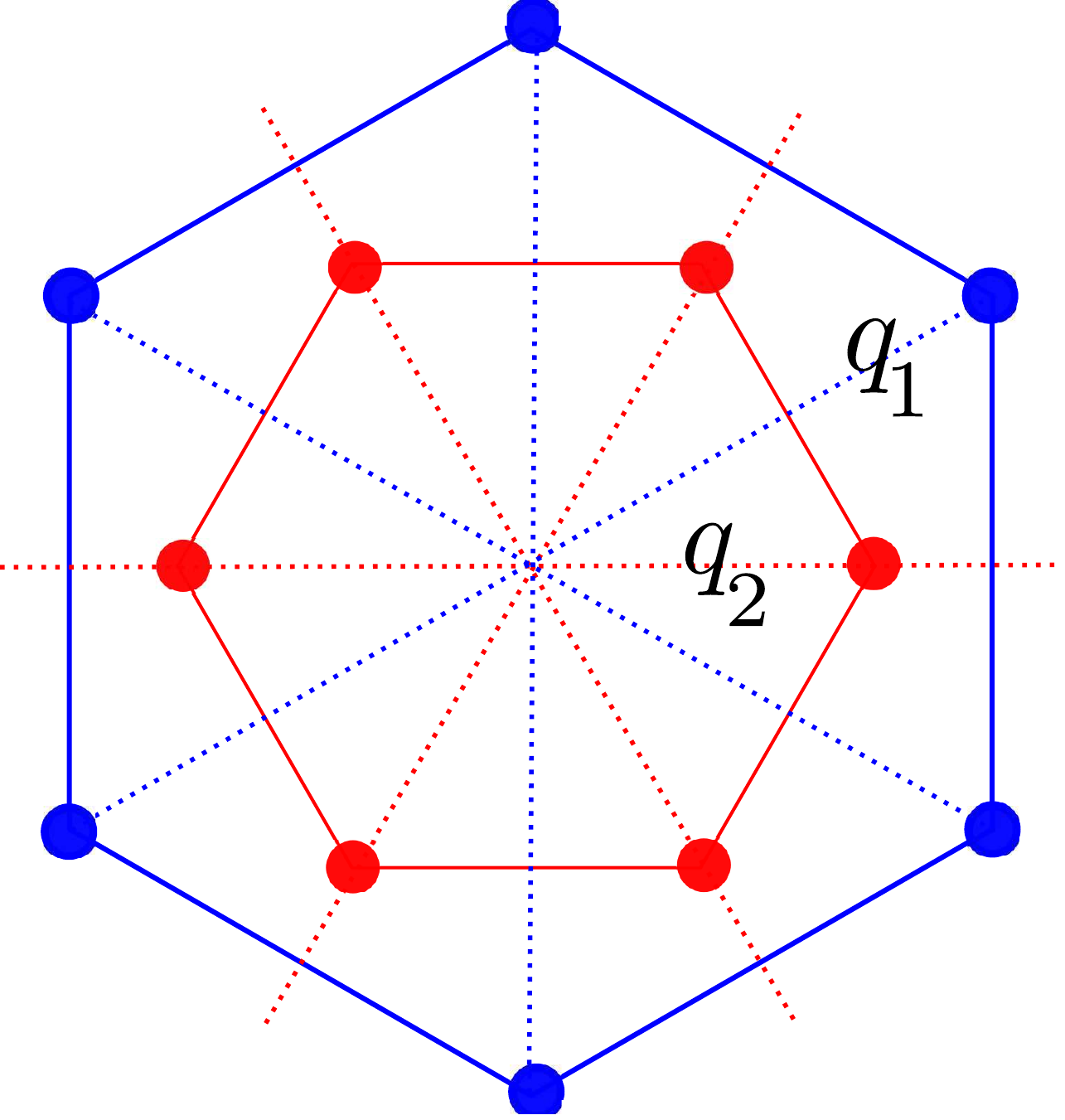}} \\ 
\hline  
$V(\phi)$ has \newline three critical points & when $2\leq n<473$ & when $n\geq 2$ & when $n\geq 3$ \\ 
\hline 
$v_{1}<0$ & true & true & true \\
\hline
$v_{2}>0$ & true & true & not true in general \\
\hline
$v_{3}<0$ & true & true & true \\
\hline
Mart\'{i}nez (M1) & proved in~\cite{martinez2012} & not verified; new problem & proved in ~\cite{martinez2012}\\
\hline
Mart\'{i}nez (M2) & proved in~\cite{martinez2012} & not verified; new problem & proved in ~\cite{martinez2012}\\
\hline
Mart\'{i}nez (M3) & verified in~\cite{martinez2012} & not verified; new problem &  (M3) fails \\
\hline
Mart\'{i}nez (M4) & computed in~\cite{martinez2013} only for some $n$ & not verified; new problem & computed in~\cite{martinez2013} only for some $n$ \\
\hline
Condition (N1) & holds & holds & not applicable \\ 
\hline 
Condition (N2) & holds, since (M2) holds & holds & holds, since (M2) holds \\ 
\hline 
Condition (N3) & proved true for $n\geq 4$& numerically true for $n\geq 7$ \newline proved true for $n\geq 10$  & not applicable \\ 
\hline
Condition (N3') & numerically verified for $n=2,3$ & case $n=2$ is treated in~\cite{vidal1999} \newline numerically verified here for $3\leq n\leq 9$ & not applicable \\ 
\hline
Orbits proved \newline to exist & $\mathcal{B}-$family \newline $\mathcal{Z}1-$family & $\mathcal{B}-$family \newline $\mathcal{Z}1-$family & $\mathcal{B}-$family with $k=0$ \\
\hline
Additional families if\newline provided (N4): $v_{2}\neq -v_{3}$ & Less-symmetric $\mathcal{B}-$family\newline $\mathcal{Z}5-$family \newline $\mathcal{ZB}-$family &  Less-symmetric $\mathcal{B}-$family\newline $\mathcal{Z}5-$family \newline $\mathcal{ZB}-$family & none is proved \\
\hline
Other cases \newline (unequal-mass): &
\multicolumn{3}{m{15cm}|}{
(1) Isosceles three-body problem. $m_{1}=m_{2}=1, m_{3}=\mu$, for $\epsilon_{1}\approx 0.379 <\mu<\epsilon_{2}\approx 2.662$, there exist Type 1 to 6 periodic brake orbits, $\mathcal{B}-$family, $\mathcal{ZB}-$family, and less-symmetric $\mathcal{B}-$family Schubart-like periodic orbits.\newline 
(2) n-pyramidal problem. $m_{1}=m_{2}\cdots=m_{n}=1, m_{n+1}=\mu$, for any $\mu<0$, there exists a $\mathcal{B}-$family Schubart-like orbit with $k=0$. }\\
\hline
\end{tabular}}
\caption{Summary of Results}
\label{summary}
\end{table}

\section*{Acknowledgments} This research is supported by NSF grant DMS-1208908.


\medskip
Received xxxx 20xx; revised xxxx 20xx.
\medskip

\end{document}